\setlist[itemize]{leftmargin=*}
\setlist[enumerate]{leftmargin=*}
\newcommand{\C}{\mathbb{C}}
\newcommand{\Z}{\mathbb{Z}}
\newcommand{\N}{\mathbb{N}}
\newcommand{\rk}{\mathrm{rk}\,}
\newcommand{\id}{\mathrm{Id}}
\newcommand{\R}{\mathbb{R}}
\newcommand{\cO}{\mathcal{O}}
\renewcommand{\P}{\mathbb{P}}
\newcommand{\bd}{\partial}
\newcommand{\ai}{$A_\infty$\ }
\newcommand{\F}{\mathcal{F}}
\newcommand{\ii}{\iota}
\newcommand{\M}{\mathcal{M}}
\newcommand{\cL}{\mathcal{L}}
\renewcommand{\a}{\"a}
\newcommand{\p}{\phi}
\newcommand{\fix}{\mathrm{Fix\,}}
\newcommand{\ps}{_{\mathrm{push}}}
\newcommand{\gl}{\mathrm{gl}}
\newcommand{\hf}{_{\mathrm{floer}}}
\newcommand{\Symp}{\text{\it Symp}}
\newcommand{\Ham}{\text{\it Ham}}
\newcommand{\Diff}{\text{\it Diff\,}}
\newcommand{\Crit}{\text{\it Crit\,}}
\newcolumntype{C}[1]{>{\centering\arraybackslash$}p{#1}<{$}}
\newcommand{\dist}{\text{\it dist}}
\newcommand{\sgn}{\mathrm{sign}\,}
\newcommand{\half}{\frac 1 2}
\DeclareMathOperator{\spn}{span}
\def\co{\colon\thinspace}
\newtheorem{theorem}{Theorem}[section]
\newtheorem{proposition}[theorem]{Proposition}
\newtheorem{lemma}[theorem]{Lemma}
\newtheorem{corollary}[theorem]{Corollary}
\theoremstyle{definition}
\newtheorem{definition}[theorem]{Definition}
\newtheorem{remark}[theorem]{Remark}
\newtheorem{hypothesis}[theorem]{Hypothesis}
\numberwithin{equation}{section}
\begin{document}

\title[Commuting symplectomorphisms]{Commuting symplectomorphisms and \\ Dehn twists in divisors}
\author{Dmitry Tonkonog}
\email{dt385@cam.ac.uk, dtonkonog@gmail.com}
\address{Department of Pure Mathematics and Mathematical Statistics,
University of Cambridge,
Wilberforce Road,
Cambridge,
CB3 0WB, UK}

\begin{abstract}
Two commuting symplectomorphisms of a symplectic manifold give rise to actions on Floer cohomologies of each other.
We prove the elliptic relation saying that the supertraces of these two actions are equal.

In the case when a symplectomorphism $f$ commutes with a symplectic involution, 
the elliptic relation provides a lower bound on the dimension of $HF^*(f)$ in terms of the Lefschetz number of $f$ restricted to the fixed locus of the involution.
We apply this bound to prove that Dehn twists around vanishing Lagrangian spheres inside most hypersurfaces in Grassmannians have infinite order in the symplectic mapping class group.
\end{abstract}

\maketitle
\setcounter{tocdepth}{1}

{\small
\tableofcontents
}

\newpage

\begin{section}{Introduction and main results}

\subsection{Overview}

Let $X$ be a symplectic manifold 
and $\Symp(X)/\Ham(X)$ be the group of all symplectomorphisms of $X$ modulo Hamiltonian isotopy. When $X$ is simply-connected, this group is the same as $\pi_0 Symp(X)$.
If one denotes by $\pi_0 \Diff(X)$ the smooth mapping class group,
there is an obvious forgetful map
$$
\Symp(X)/\Ham(X)\quad
\xrightarrow{\makebox[1.5cm]{{\footnotesize \it forgetful}}}
\quad
 \pi_0 \Diff(X).
$$
Paul Seidel in his thesis \cite{SeiThesis} found examples when this map is not injective:
if $X$ is any complete intersection of complex dimension 2 other than $\P^2$ or $\P^1 \times \P^1$, and $\tau\co X\to X$ is a certain symplectomorphism called the Dehn twist, then $\tau^2$ is smoothly isotopic to the identity, but not Hamiltonian isotopic to the identity.
Later Seidel proved  \cite{Sei00} that the kernel of the above map is infinite for some K3 surfaces, again by considering the group generated by a Dehn twist. Using a new technique, we study Dehn twists 
in certain divisors (the main examples are divisors in Grassmannians) and
extend the range of examples when the above forgetful map has infinite kernel.

Suppose $X$ satisfies the so-called $W^+$ condition, which is slighly stronger than weak monotonicity.
We define, for two commuting symplectomorphisms $f,g\co  X\to X$,
their actions on Floer cohomology $f\hf\co HF^*(g)\to HF^*(g)$, $g\hf\co HF^*(f)\to HF^*(f)$.
We then prove a theorem which was proposed by Paul Seidel, cf.~\cite[Remark~4.1]{SeiFlux}, who suggested it be called the 
elliptic relation.

\begin{theorem}[Elliptic relation]
\label{th:ell_rel}
If $X$ is a symplectic manifold satisfying the $W^+$ condition and $f,g\co X\to X$
are two commuting symplectomorphisms, then
$$
STr(f\hf)\ =\ STr(g\hf) \in \Lambda.
$$
\end{theorem}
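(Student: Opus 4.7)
The plan is to reduce both supertraces to one and the same signed count of common fixed points of $f$ and $g$, making the desired symmetry manifest. Heuristically, a commuting pair $(f,g)$ assembles $X$ into a bundle over the torus $T^2=S^1\times S^1$, and the two actions $f\hf$ on $HF(g)$ and $g\hf$ on $HF(f)$ are the two $S^1$-reductions of a single elliptic invariant of this bundle; this bundle picture is the conceptual source of the equality.

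Concretely, I would first exploit the usual Floer-theoretic invariance under auxiliary choices to pick $f$-equivariant data defining $HF(g)$: an $f$-invariant Hamiltonian perturbation making $g$ nondegenerate, together with an $f$-invariant family of $\omega$-compatible almost complex structures. The $W^+$ hypothesis is used here in the standard way to rule out sphere bubbling, so that the equivariant chain-level construction of $f\hf$ is well-defined and independent of the equivariant choices. With these choices, $f$ preserves $\fix(g)$ bijectively and intertwines the Floer cylinder equation on the mapping torus of $g$, so $f\hf$ is induced on homology by an honest signed permutation of the generators of $CF(g)$.

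The supertrace of such a signed permutation is a local sum
$$
STr(f\hf)\ =\ \sum_{x\in\fix(f)\cap\fix(g)}\sigma(x;g,f),
$$
where $\sigma(x;g,f)$ combines the Conley--Zehnder-type grading of $x$ as a $CF(g)$-generator with the sign by which $Df_x$ acts on the orientation line at $x$. Swapping the roles of $f$ and $g$ yields the analogous expression for $STr(g\hf)$ with local weights $\sigma(x;f,g)$. The theorem then reduces to the pointwise identity $\sigma(x;g,f)=\sigma(x;f,g)$ at each common fixed point, a purely linear-algebraic statement about the commuting symplectic automorphisms $Df_x, Dg_x$ of $T_xX$. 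After simultaneously block-decomposing this commuting pair into standard pieces of $\mathrm{Sp}(2,\R)\times\mathrm{Sp}(2,\R)$-type, the identity can be verified one block at a time and recombined multiplicatively.

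The main obstacle is this pointwise sign identity: although the symmetric appearance of $f$ and $g$ makes it morally evident, reading it out of the conventional Conley--Zehnder and determinant-line bookkeeping requires very careful choice of trivialisations. A secondary difficulty is arranging equivariant nondegeneracy, since an $f$-invariant perturbation of $g$ may be forced to remain degenerate at fixed points of $f$ with special symmetry; in such cases one would proceed via a Morse--Bott formulation, where the local contribution becomes the Euler number of an obstruction bundle over the critical submanifold, or via a small non-equivariant perturbation followed by a cancellation argument for the generators created in dual pairs.
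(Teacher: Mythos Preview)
Your torus-bundle picture is correct and is exactly what the paper exploits, but the paper does \emph{not} reduce to a fixed-point count. Instead, using generic (non-equivariant) data, $STr(g\hf)$ is expressed as a count of continuation cylinders with asymptotics differing by $g$ (Lemma~\ref{lem:trace_on_chains}); these are then glued into index-zero holomorphic sections of the fibration $E_{f,g}\to T^2$ (Proposition~\ref{prop_gluing}), and the equality follows because that section count is independent of the conformal structure on the base (Proposition~\ref{prop:count_sec_torus_invar}). The substantive work is the sign formula~(\ref{eq:sign_compare}) comparing torus-section signs with the coherent-orientation sign and degree of the corresponding cylinder, which is what produces \emph{super}traces rather than ordinary traces.

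Your route via equivariant data has a genuine gap that you underrate. The obstacle you call ``secondary'' --- equivariant nondegeneracy --- is real, but the more serious one you do not mention is \emph{equivariant regularity of $J$}: for a general $f$ of infinite order there is no averaging, and an $f$-invariant almost complex structure need not be regular for the Floer equation of $g$, so $CF(g;J,H)$ is not even defined and your signed permutation has nothing to act on. Moreover, even granting equivariant regular data on each side separately, the two supertraces become sums over $\fix f\cap\fix g_H$ and $\fix g\cap\fix f_{H'}$ for \emph{different} perturbations $H,H'$; these index sets coincide only if you take $H=H'=0$ with $f,g$ both already nondegenerate, which is not generic among commuting pairs. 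Your strategy is essentially that of Remark~\ref{rem:alt_proof_bound_main}, which works when one of the maps is a finite-order involution (so invariant $J$ exist by averaging and only the normal-bundle sign remains), but the finite-order hypothesis is essential there.
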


Here $\Lambda$ is the Novikov field. In the rest of the introduction, we explain the elliptic relation, state its Lagrangian version, and
consider applications to Dehn twists in divisors. We begin by discussing our results regarding Dehn twists.

\subsection{Order of Dehn twists in divisors}
Let $Gr(k,n)$ be the Grassmannian of $k$-planes in $\C^n$. Let $\cO(d)$ be the line bundle on
$Gr(k,n)$ which is the pullback of $\cO_{\P^N}(d)$ under the Pl\" ucker embedding $Gr(k,n)\subset \P^N$.
Consider  a smooth divisor $X\subset Gr(k,n)$ in the linear system $|\cO(d)|=\P H^0(Gr(k,n),\cO(d))$.
The results below are interesting even for $Gr(1,n)=\P^{n-1}$, so for simplicity one can take  $X\subset \P^{n-1}$ to be a smooth projective hypersurface of degree $d$ throughout this subsection.

For $d\ge 2$, $X$ contains a class of Lagrangian spheres  which we call $|\cO(d)|$-vanishing Lagrangian spheres, which, briefly,
 are vanishing cycles for algebraic degenerations of $X$ inside the linear system $|\cO(d)|$.
To every parametrised  Lagrangian sphere $L\subset X$ one associates a symplectomorphism $\tau_L\co X\to X$ called the Dehn twist
around $L$. (The definitions are given in Section~\ref{sec:van_spheres_intro}.)
We prove the following.

\begin{theorem}
\label{th:twist_inf_grass}
Let $X\subset Gr(k,n)$ be a smooth divisor in the linear system $|\cO(d)|$, and $L\subset X$ be an $|\cO(d)|$-vanishing Lagrangian sphere. Suppose
$$
3\le d\le n\qquad\mathrm{or}\qquad d\ge k(n-k)+n-2. 
$$
Then the Hamiltonian isotopy class of $\tau_L$ is an element of infinite order in the group $Symp(X)/Ham(X)$.
\end{theorem}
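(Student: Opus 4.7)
The proof will apply the elliptic relation (Theorem~\ref{th:ell_rel}) with $g=\tau_L^k$ paired against a symplectic involution $\iota: X\to X$ commuting with $\tau_L$, deducing that $\tau_L^k$ is not Hamiltonian isotopic to the identity by comparing two supertrace computations.

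The first step is to construct a symplectic involution $\iota$ of $X$ that commutes strictly with $\tau_L$ and whose fixed locus $\fix(\iota)$ interacts controllably with $L$. My plan is to build $\iota$ from a symmetric pencil: pick a linear involution of $\C^n$, which induces a symplectic involution of $Gr(k,n)$, and choose a two-dimensional invariant linear system $\Pi\subset|\cO(d)|$ containing $X$ as a smooth member together with an invariant nodal degeneration whose vanishing cycle is (Hamiltonian isotopic to) $L$. Symplectic parallel transport in $\Pi$ then upgrades the involutive action on the base to a symplectomorphism $\iota$ of $X$ commuting with $\tau_L$. The numerical range on $d$ in the theorem is what both permits this construction and ensures the $W^+$ condition on $X$ needed to invoke Theorem~\ref{th:ell_rel}.

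Assume for contradiction that $\tau_L^k$ is Hamiltonian isotopic to $\id$ for some $k\ge 1$. Then $HF(\tau_L^k)\cong QH^*(X;\Lambda)$ as a graded vector space carrying the action of the centraliser of $\tau_L^k$, and the supertrace of $\iota\hf$ coincides with the classical Lefschetz number $L(\iota)$. By Theorem~\ref{th:ell_rel} this must agree with the supertrace of $(\tau_L^k)\hf$ on $HF(\iota)$. I would compute the latter via a Morse model: generators of $HF(\iota)$ correspond to critical points of a Morse function on $\fix(\iota)$; those lying far from $L$ contribute a $k$-independent baseline, while each generator close to $L$ receives a correction from $(\tau_L^k)\hf$ of Picard--Lefschetz type, counted by Floer strips that wind around $L$. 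A careful accounting should show that this correction varies non-trivially with $k$, so that for $k$ sufficiently large the right-hand side exceeds the fixed integer $L(\iota)$, yielding the contradiction.

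The principal obstacle is Step~1: arranging strict commutativity $\iota\tau_L=\tau_L\iota$ together with the desired incidence of $\fix(\iota)$ with $L$, and verifying this across the full degree range of the theorem, since the two intervals $3\le d\le n$ and $d\ge k(n-k)+n-2$ may require different pencil constructions. A subsidiary challenge is the Floer-theoretic Picard--Lefschetz calculation of $STr((\tau_L^k)\hf$ on $HF(\iota))$ and isolating its $k$-dependence; here the hypothesis that $L$ is specifically an $|\cO(d)|$-vanishing sphere, rather than a general Lagrangian sphere, should be what makes the local holomorphic curve count computable.
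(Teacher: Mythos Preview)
Your strategy has a genuine gap: with a \emph{single} sphere $L$, the quantity $STr\bigl((\tau_L^{k})\hf\colon HF(\iota)\to HF(\iota)\bigr)$ does \emph{not} vary with $k$, so no contradiction can arise this way. By Lemma~\ref{lem:action_hf_homology_fixloc} and Corollary~\ref{cor:str_high_order_vanish}, that supertrace equals the classical Lefschetz number $L\bigl((\tau_L^{k})|_{X^{\iota}}\bigr)\cdot q^{0}$, so your ``Floer strips winding around $L$'' reduce to ordinary Picard--Lefschetz. Now compute: if $L^{\iota}\subset\Sigma$ is a sphere and $\dim_{\C}\Sigma$ is odd, then $[L^{\iota}]\cdot[L^{\iota}]=0$, the action of $(\tau_{L^{\iota}})_{*}$ on $H_{*}(\Sigma)$ is unipotent with all diagonal entries equal to $1$, and its trace is independent of $k$. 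If $\dim_{\C}\Sigma$ is even, then $(\tau_{L^{\iota}})_{*}^{2}=\id$ on $H_{*}(\Sigma)$ and the Lefschetz number is $2$-periodic in $k$. Either way there is no growth.

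The paper fixes this by introducing a \emph{second} $\iota$-invariant vanishing sphere $L_{2}$ meeting $L_{1}$ transversely once (this is where $d\ge 3$ is genuinely used, via an $A_{2}$ fibration construction; the interval conditions on $d$ only encode the $W^{+}$ condition). With $\dim_{\C}\Sigma$ odd, the product $(\tau_{L_{1}^{\iota}})^{2k}(\tau_{L_{2}^{\iota}})^{2k}$ acts on $\spn\{[L_{1}^{\iota}],[L_{2}^{\iota}]\}$ as a product of two opposite unipotents, giving supertrace $-4k^{2}-2$ and hence $L\bigl((\tau_{L_{1}}^{2k}\tau_{L_{2}}^{2k})|_{X^{\iota}}\bigr)=-4k^{2}+c$. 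Proposition~\ref{prop:hf_bound_fixp_notrans} then forces $\dim HF(\tau_{L_{1}}^{2k}\tau_{L_{2}}^{2k})\to\infty$. A further step you are missing is how to get back from the product to $\tau_{L_{1}}$ itself: since $L_{1}$ and $L_{2}$ are both $|\cO(d)|$-vanishing, Lemma~\ref{lem:van_unique} makes $\tau_{L_{1}}$ and $\tau_{L_{2}}$ conjugate, so triviality of $\tau_{L_{1}}^{2k}$ would force triviality of the product.
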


When $d=2$ and $k=1$ ($X$ is a projective quadric), $\tau_L$ has order 1 or 2 depending on the parity of $n$ \cite[Lemma 4.2]{Smith12}. While our proof crucially uses $d\ge 3$, further restrictions on $d$ 
are only needed to make $X$ satisfy the $W^+$ condition, so that the ``classical'' definition of Floer
cohomology of symplectomorphisms $X\to X$ applies.
There are techniques \cite{FO99} defining Floer cohomology of symplectomorphisms on arbitrary symplectic manifolds. 
With their help the proof of Theorem~\ref{th:twist_inf_grass} (and of Theorem~\ref{th:ell_rel}) should work for all $d\ge 3$.

Recall the forgetful map $\Symp(X)/\Ham(X)\to \pi_0 \Diff(X)$.
If $\dim_\C X$ is odd and $d\ge 3$, the image of $\tau_L$ has infinite order in $\pi_0 \Diff(X)$
by the Picard-Lefschetz formula, so Theorem~\ref{th:twist_inf_grass} becomes trivial. However, when $\dim_\C X$ is even, the image of $\tau_L$ has finite order in
$\pi_0 \Diff(X)$ (see Subsection~\ref{subsec:dehn_twists} for details), so Theorem~\ref{th:twist_inf_grass} is really of symplectic nature in this case.
When $X$ is Calabi-Yau ($d=n$), Theorem~\ref{th:twist_inf_grass} follows from a grading argument of Paul Seidel
\cite{Sei00}.
Theorem~\ref{th:twist_inf_grass} is new in all cases when $\dim_\C X$ is even and $d\neq n$.
For instance, it appears to be new even for the cubic surface $X\subset \P^3$.

Let 
$$\Delta\subset \P H^0(Gr(k,n),\cO(d))$$ 
be the discriminant variety parameterising all singular divisors
in $|\cO(d)|$.
Theorem~\ref{th:twist_inf_grass} implies a corollary about the fundamental group of the complement
to the discriminant.
Fix a divisor $X\in|\cO(d)|$. For any family $X_t\subset Gr(k,n)$ of smooth divisors in $|\cO(d)|$, $t\in[0,1]$,  there is a symplectic parallel transport map, a symplectomorphism $X_0\to X_1$ which depends 
up to Hamiltonian isotopy only on the homotopy class of the path $X_t$ relative to its endpoints.
Applied to loops, parallel transport gives the symplectic monodromy map
$$
 \pi_1\left(\P H^0(Gr(k,n),\cO(d))\setminus \Delta\right)\quad 
\xrightarrow{\makebox[2cm]{{\footnotesize \it monodromy}}}
\quad Symp(X)/Ham(X). 
$$
The discriminant complement contains a distinguished conjugacy class of loops $\gamma$ called meridian loops.
A meridian loop 
$$\gamma\subset \P H^0(Gr(k,n),\cO(d))\setminus \Delta$$ 
is the boundary of a 2-disk in $\P H^0(Gr(n,k),\cO(d))$ that intersects $\Delta$ transversely once.
The image of such a loop under the monodromy map is the Dehn twist $\tau_L$ where $L\subset X$
is an $|\cO(d)|$-vanishing Lagrangian sphere. Theorem~\ref{th:twist_inf_grass} implies the following.

\begin{corollary}
\label{cor:fund_gp_grass}
If $3\le d\le n$ or $d\ge k(n-k)+n-2$,  and
$\gamma\subset \P H^0(Gr(k,n),\cO(d))\setminus \Delta$
is a meridian loop,
then
 $$
[\gamma]\in \pi_1\left(\P H^0(Gr(k,n),\cO(d))\setminus \Delta\right) \quad \text{is\ an\ element\ of\ infinite\ order.}
$$
\end{corollary}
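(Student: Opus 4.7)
The plan is to deduce this directly from Theorem~\ref{th:twist_inf_grass} via the symplectic monodromy homomorphism discussed immediately before the corollary. Specifically, I would argue as follows. The parallel transport construction produces a group homomorphism
$$
\mu:\pi_1\bigl(\P H^0(Gr(k,n),\cO(d))\setminus \Delta\bigr)\ \longrightarrow\ \Symp(X)/\Ham(X),
$$
and the excerpt has already identified $\mu([\gamma])=\tau_L$ for any meridian loop $\gamma$, where $L\subset X$ is an $|\cO(d)|$-vanishing Lagrangian sphere (this is the content of the Picard--Lefschetz description of the vanishing cycle associated to the transverse disk bounding $\gamma$). Under the hypotheses $3\le d\le n$ or $d\ge k(n-k)+n-2$, Theorem~\ref{th:twist_inf_grass} states that $\tau_L$ has infinite order in $\Symp(X)/\Ham(X)$.

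Now suppose for contradiction that $[\gamma]^m=1$ in $\pi_1$ for some nonzero integer $m$. Applying the homomorphism $\mu$, we obtain
$$
\tau_L^m\ =\ \mu([\gamma])^m\ =\ \mu([\gamma]^m)\ =\ 1\quad\in\ \Symp(X)/\Ham(X),
$$
contradicting Theorem~\ref{th:twist_inf_grass}. Hence $[\gamma]$ has infinite order.

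The only substantive things to verify are the two ingredients that feed into this one-line argument: first, that parallel transport along a path of smooth divisors in $|\cO(d)|$ is well-defined up to Hamiltonian isotopy and depends only on the relative homotopy class of the path (so that $\mu$ is a bona fide homomorphism on $\pi_1$ of the discriminant complement), and second, that the monodromy along a transverse meridian disk is indeed the Dehn twist $\tau_L$ on the vanishing cycle $L$. Both are standard consequences of the Picard--Lefschetz theory for Lefschetz pencils in the linear system $|\cO(d)|$ and can be cited; they are recalled in the paragraph preceding the corollary statement. Given these, there is no genuine obstacle in the proof: the whole content of the corollary is already packaged in Theorem~\ref{th:twist_inf_grass}, and the corollary is merely its translation across the homomorphism $\mu$.
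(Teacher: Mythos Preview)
Your proposal is correct and matches the paper's own argument: the paper's proof is the single sentence that the corollary follows from Theorem~\ref{th:twist_inf_grass} and Lemma~\ref{lem:dehn_tw_and_monodromy}, which is exactly the content you have spelled out (the lemma being the statement that monodromy around a meridian is the Dehn twist $\tau_L$).
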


Note that $[\gamma]\in H_1\left(\P H^0(Gr(k,n),\cO(d))\setminus \Delta;\Z\right)$ has finite order.
For the projective space $Gr(1,n)=\P^{n-1}$, the fundamental group
$\pi_1(\P H^0(\P^{n-1},\cO(d))\setminus \Delta)$ is computed by L\"onne in \cite{Loe09}
and implies Corollary~\ref{cor:fund_gp_grass} for $k=1$. For $k\neq 1$,
 the corresponding fundamental group seems not to be studied, but
 Corollary~\ref{cor:fund_gp_grass} should allow a more straightforward proof, suggested to us by Dmitri Panov. Namely,
assume $\dim_\C X$ is even (otherwise the corollary follows from the fact the Dehn twist has infinite order topologically) and
consider the $d:1$ cover of $Gr(k,n)$ branched along $X$, which now has odd complex dimension. A nodal degeneration of $X$ provides an $A_d$-degeneration of the cover, and the monodromy around such a degeneration, which is a composition of Dehn twists around a chain of Lagrangian spheres, has infinite order in the smooth mapping class group (which uses the Picard-Lefschetz formula and the fact the spheres are now odd-dimensional). This observation is enough to imply Corollary~\ref{cor:fund_gp_grass}, bypassing the need to consider the Dehn twist in $X$ itself. However, we decided to keep Corollary~\ref{cor:fund_gp_grass} to add an additional context to the main theorems.

We prove analogues of 
Theorem~\ref{th:twist_inf_grass} 
and Corollary~\ref{cor:fund_gp_grass} for divisors in some very ample line bundles $\cL\to Y$, where
 $Y$ is a K\" ahler manifold which carries a holomorphic involution
with certain properties. The precise statement is postponed to Subsection~\ref{subsec:gen_theorems}.

\subsection{Elliptic relation for commuting symplectomorphisms}
To prove Theorem~\ref{th:twist_inf_grass}, we use the elliptic relation 
(Theorem~\ref{th:ell_rel})
which we now discuss.

Let $X$ be a symplectic manifold satisfying the $W^+$ condition explained in Section~\ref{sec:ell_rel}; for example, $X$ can be a K\" ahler manifold which is either Fano, or whose canonical class $K_X$ is sufficiently positive. Given a symplectomorphism $f\co X\to X$, one defines its Floer cohomology $HF^*(f)$. It is a $\Z_2$-graded vector space, $HF^*(f)=HF^0(f)\oplus HF^1(f)$, over the Novikov field 
$$\Lambda=\left\{\sum_{i=0}^{\infty}a_iq^{\omega_i}:\ 
a_i\in \C,\ \omega_i\in\R,\ \lim_{i\to\infty}\omega_i=+\infty\right\}.$$

For any two commuting symplectomorphisms $f,g\co X\to X$ we define invertible automorphisms
$$g\hf\co HF^*(f)\to HF^*(f) \quad\text{and}\quad f\hf \co HF^*(g)\to HF^*(g).$$
The construction of $HF^*(f)$ uses a time-dependent almost complex structure $J$ and a Hamiltonian $H$ to define a vector space $HF^*(f;J,H)$. This vector space is canonically isomorphic (on the chain level) to $HF^*(gfg^{-1}; g^*J, H\circ g)$ by composing all pseudo-holomorphic curves with $g$. If $f,g$ commute, $g\hf$ is the composition of isomorphisms
$$
HF^*(f;J,H)\longrightarrow HF^*(gfg^{-1}; g^*J, H\circ g) = HF^*(f; g^*J, H\circ g)  \longrightarrow HF^*(f;J,H)
$$
where the last arrow is the continuation map associated to a homotopy of data from $(g^*J,H\circ g)$ to $(J,H)$.

The automorphisms $f\hf,g\hf$ have zero degree, and one can define their supertrace:
$$
STr(g\hf)\coloneqq Tr(g\hf|_{HF^{0}(f)})-Tr(g\hf|_{HF^{1}(f)}) \ \in \ \Lambda.
$$
Recall that Theorem~\ref{th:ell_rel} asserts that $STr(f\hf)=STr(g\hf)$.

Now suppose a symplectomorphism $f$ commutes with a finite-order symplectomorphism $\phi$,
$\phi^k=\id$, with fixed locus $X^\phi$. Then $X^\phi$ is a disjoint union of symplectic submanifolds.
Using an argument reminiscent of the PSS isomorphism, we show that
$$STr(f\hf\co  HF^*(\phi)\to HF^*(\phi))=L(f|_{X^\phi})\cdot q^0.$$ 
The right hand side is  the topological Lefschetz number 
$$L(f|_{X^\phi})=Tr(f^*|_{H^{even}(X^\phi)}-Tr(f^*|_{H^{odd}(X^\phi)})$$
where $f^*\co  H^*(X^\phi)\to H^*(X^\phi)$ is the classical action on the cohomology of $X^\phi$.
On the other hand, using that $\phi$ has finite order, we show that
$STr(\phi\hf\co  HF^*(f)\to HF^*(f))$ equals $a\cdot q^0$ where
$|a|\le \dim_\Lambda HF^*(f)$.
Combining this with the elliptic relation, we obtain the following corollary.

\begin{proposition}
\label{prop:hf_bound_fixp_notrans}
Let $X$ be a symplectic manifold satisfying the $W^+$ condition, $f,\p\co X\to X$ two commuting symplectomorphisms
and $\p^k=\id$.
Then $$\dim_\Lambda HF^*(f)\ge \left|L(f|_{X^\phi})\right|.$$
\end{proposition}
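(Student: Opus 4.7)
The plan is to apply Theorem~\ref{th:ell_rel} to the commuting pair $(f,\phi)$, obtaining
$$
STr(f\hf : HF(\phi)\to HF(\phi)) \ =\ STr(\phi\hf : HF(f)\to HF(f)),
$$
and to analyze each side separately. Concretely, I would show that the left-hand side equals $L(f|_{X^\phi})\cdot q^0$ and that the right-hand side equals some $a\cdot q^0\in \C\cdot q^0\subset\Lambda$ with $|a|\le\dim_\Lambda HF(f)$; the proposition then follows immediately.

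For the right-hand side, I would use functoriality of the Floer-theoretic action to conclude $(\phi\hf)^k=(\phi^k)\hf = \id\hf = \id$ on $HF(f)$. Hence $\phi\hf$ is a $\Lambda$-linear automorphism of a finite-dimensional space annihilated by $T^k-1$, so it is diagonalizable with eigenvalues in $\mu_k\subset \C\subset \Lambda$. The supertrace is therefore a signed sum of roots of unity; it lies in $\C\cdot q^0$, and the triangle inequality gives $|a|\le\dim_\Lambda HF(f)$ since each eigenvalue has complex absolute value~$1$.

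For the left-hand side, I would set up a Morse--Bott style model for $HF(\phi)$. Since $\phi^k=\id$ and $X^\phi$ is smooth, one may choose a $\phi$-invariant almost complex structure $J$ and take the Hamiltonian to be a small Morse function on $X^\phi$ suitably extended. The 1-periodic orbits of $\phi$ localize near the fixed locus and correspond to critical points of this Morse function on $X^\phi$. A PSS-type argument then produces a chain-level quasi-isomorphism
$$
CM^*(X^\phi;\Lambda)\longrightarrow CF^*(\phi),
$$
realizing $HF(\phi)\cong H^*(X^\phi;\Lambda)$. Choosing auxiliary data that is simultaneously $f$-equivariant (using $f(X^\phi)=X^\phi$, which follows from $f\phi=\phi f$), this identification intertwines $f\hf$ with the classical pullback $f^*$ acting on $H^*(X^\phi;\Lambda)$. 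Taking the $\Z_2$-graded supertrace recovers the topological Lefschetz number $L(f|_{X^\phi})$, realized as an element of $\Z\cdot q^0\subset\Lambda$ because the Morse/PSS contributions computing this trace all have zero energy.

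The main obstacle will be the last identification: verifying that the PSS-style quasi-isomorphism genuinely intertwines the Floer-theoretic action of $f$ with cohomological pullback. This requires arranging both Floer data on $X$ and Morse data on $X^\phi$ to be $f$-compatible, and then comparing the continuation map defining $f\hf$ with the geometric pullback by $f$; in practice this amounts to a moduli-space argument analogous to the standard proofs that PSS respects naturality under symplectomorphisms. A subsidiary issue is the $\Z_2$-grading conventions on $HF(\phi)$ versus $H^*(X^\phi;\Lambda)$, which can shift the supertrace by an overall sign; since the final inequality is on $|L(f|_{X^\phi})|$, such a sign is harmless.
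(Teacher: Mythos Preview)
Your overall strategy---apply the elliptic relation, bound $STr(\phi\hf)$ on $HF(f)$ via eigenvalue analysis, and identify $STr(f\hf)$ on $HF(\phi)$ with the Lefschetz number---is exactly the paper's, and your treatment of the right-hand side is correct and coincides with Lemma~\ref{lem:str_le_dim_hf}.

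The gap is in the left-hand side. You assert a PSS-type quasi-isomorphism $CM^*(X^\phi;\Lambda)\to CF^*(\phi)$ giving $HF(\phi)\cong H^*(X^\phi;\Lambda)$, but this is not true in general. With $\phi$-invariant data the generators of $CF(\phi)$ are indeed critical points of a Morse function on $X^\phi$, but the Floer differential may receive contributions from positive-energy holomorphic cylinders that are not gradient flowlines; with $s$-independent $\phi$-invariant data these are typically not regular, and after perturbing to regular data there is no reason for them to cancel. Equivalently, the Morse--Bott spectral sequence from $H^*(X^\phi;\Lambda)$ to $HF(\phi)$ need not degenerate. A second problem is your proposal to choose $f$-equivariant auxiliary data: $f$ has no finite-order hypothesis, so such data need not exist, and even if it does it will generally fail transversality. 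The obstacle you flag (intertwining $f\hf$ with $f^*$) is real, but it is downstream of these two more basic issues.

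The paper's Lemma~\ref{lem:action_hf_homology_fixloc} avoids both. It uses $\phi$-invariant (not $f$-invariant) data and proves only the weaker statement
\[
STr\bigl(f\hf:HF(\phi)\to HF(\phi)\bigr)=L(f|_{X^\phi})\cdot q^0+\sum_i a_i\, q^{\omega_i},\qquad \omega_i>0,
\]
by showing that the zero-energy solutions to the continuation equation defining $f\hf$ are exactly Morse continuation flowlines on $X^\phi$, and that these particular solutions are already regular. The possible higher-order terms are then eliminated \emph{a posteriori}: equating with the right-hand side (which lies in $\C\cdot q^0$) via the elliptic relation forces all $a_i=0$ and gives $L(f|_{X^\phi})=a$, whence $|L(f|_{X^\phi})|\le\dim_\Lambda HF(f)$. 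So the two sides of the elliptic relation must be played off against each other; they cannot be computed independently in the way you suggest.
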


\begin{remark}
The fixed locus $X^\phi$ is allowed to be disconnected, with components of different dimensions.
\end{remark}

\begin{remark}
If $f\co X\to X$ is a diffeomorphism with smooth fixed locus $X^f$, such that $\id-df(x)|_{N_x\Sigma}$ is non-degenerate on the normal space $N_x\Sigma\subset T_xX$ to any connected component $\Sigma\subset X^f$ for every $x\in \Sigma$,
then 
$$L(f)=\sum_{\Sigma\subset X^f}\sgn(\det (\id-df|_{N_x\Sigma}))\cdot \chi(\Sigma).$$ 
Consequently, if $\phi,\psi\co X\to X$ are finite order symplectomorphisms, we get
$L(\phi|_{X^\psi})=L(\psi|_{X^\phi})=\chi(X^\phi\cap X^\psi)$, provided the latter intersection is clean. This agrees with the elliptic relation and the topological interpretation of the Floer-homological actions for finite order maps.
\end{remark}

\begin{remark}
\label{rem:alt_proof_bound_intro}
It is possible to give a more straightforward proof of Proposition~\ref{prop:hf_bound_fixp_notrans} 
which does not appeal to Theorem~\ref{th:ell_rel}, but still requires some analysis
in the spirit of \cite[Lemma~14.11]{SeiBook08}. See Remark~\ref{rem:alt_proof_bound_main} for more details.
\end{remark}

\begin{remark}
Theorem~\ref{th:ell_rel} holds  when $f,g$ commute only up to Hamiltonian isotopy, and more generally when $fg^{-1}$ is isomorphic to $\id$ in the Donaldson category, whose objects are symplectomorphisms of $X$ and $Hom(f,g)=HF^*(fg^{-1})$; the proofs require only minor modifications.
In Proposition~\ref{prop:hf_bound_fixp_notrans}, $f,g$ can also be allowed to commute up to Hamiltonian isotopy. 
\end{remark}

\subsection{Outline of proof of Theorem~\ref{th:ell_rel}}
The complete proof of Theorem~\ref{th:ell_rel} with all necessary definitions is found in Section~\ref{sec:ell_rel};
here we provide a sketch, illustrated by Figure~\ref{fig:el_rel}, and indicate the main technical issue we have to solve.

\begin{figure}[h]
\includegraphics{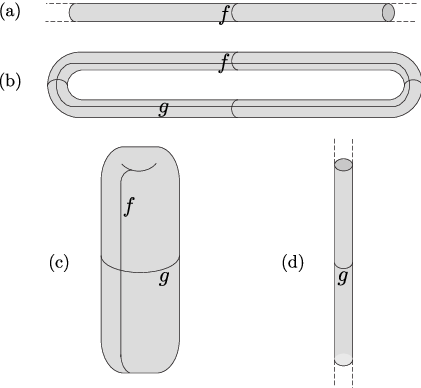}
\caption{Changing the base of a symplectic fibration in the proof of Theorem~\ref{th:ell_rel}.}
\label{fig:el_rel}
\end{figure}

Let  $f,g$ be two commuting symplectomorphisms. By our definition, the supertrace $STr(g\hf)$
is computed by counting certain solutions to  Floer's continuation equation, or equivalently by counting holomorphic sections of a certain symplectic fibration $E_f\to S^1\times \R$, see Figure~\ref{fig:el_rel}(a). This  fibration has monodromy $f$ along $S^1$, and the almost complex structure on $E_f$  differs by the action of $g$ over the two ends of the cylinder. We count only those sections whose asymptotics differ by the action of $g$ over the ends of the cylinder. One can therefore glue the fibration, together with the almost complex structure, into a fibration $E_{f,g}\to S^1\times S^1$. A gluing theorem in Symplectic Field Theory gives a bijection between holomorphic sections $S^1\times \R\to E_f$ (with asymptotics as above) and all holomorphic sections $S^1\times S^1\to E_{f,g}$ where $S^1\times S^1$ is endowed with the complex structure which is very ``long'' in the direction of the second $S^1$-factor: see Figure~\ref{fig:el_rel}(b). We will refer to this bijection by $(*)$ in the next few paragraphs.

On the other hand, the count of holomorphic sections  $S^1\times S^1\to E_{f,g}$ does not depend on the chosen complex structure on $S^1\times S^1$. 
Take another complex structure on $S^1\times S^1$ which is ``long'' in the first $S^1$-factor instead of the second one, see Figure~\ref{fig:el_rel}(c). The same gluing argument as above $(*)$ implies that the count of holomorphic sections $S^1\times S^1\to E_{f,g}$ is equal
to the count of holomorphic sections $\R\times S^1\to E_g$ (with asymptotics different by the action of $f$ over the ends of the cylinder), where $E_g\to \R\times S^1$ is the fibration obtained by cutting $E_{f,g}$ along the first $S^1$-factor,
see Figure~\ref{fig:el_rel}(d). Similarly to what we began with, the latter count of holomorphic sections over $\R\times S^1$ gives $STr(f\hf)$.


The key difficulty in upgrading this sketch to a proof is to determine how the bijection $(*)$ behaves with respect to the signs attached
to  sections over the cylinder (which in general depend on the choice of a ``coherent orientation'', but are canonical for sections contributing to the supertrace),
and signs canonically attached to sections over the torus. The outcome is that $(*)$
multiplies  signs by $(-1)^{\deg x}$ where $x$ is a $\pm\infty$ asymptotic periodic orbit of the section over the cylinder. (The $\pm \infty$ asymptotics differ by $g$ and thus have the same degree.)
This is Formula (\ref{eq:sign_compare}) in Section~\ref{sec:ell_rel}.
It explains why Theorem~\ref{th:ell_rel} is an equality between  supertraces and
not usual traces.
(We have not found Formula (\ref{eq:sign_compare}) elsewhere in the literature. Coherent orientations in SFT are discussed in \cite{EliGiHo10, BouMo04},
see especially \cite[Corollary 7]{BouMo04}, but don't seem to give the result we need).

\begin{remark}
 \label{rem:elliptic_categorical}
As the proof uses the torus with different complex structures (i.e.~elliptic curves), this justifies the name ``elliptic relation''. There is some categorical perspective to the elliptic relation, as well: Ben-Zvi and~Nadler \cite[Theorem~1.2]{BZNa13} obtained an equality between the so-called ``secondary traces'' in a 2-category, which also comes from cutting the torus into pieces in two different ways (however, not into two different cylinders as we do).
\end{remark}

\subsection{Elliptic relation for invariant Lagrangians}
Before explaining how the elliptic relation helps to prove  Theorem~\ref{th:twist_inf_grass}, let us 
discuss its Lagrangian version. The coefficient field is still $\Lambda$.
Definitions and sketch proofs are briefly presented in Subsection~\ref{subsec:ell_rel_lag}.

Let $X$ be a connected monotone symplectic manifold (e.g.~complex Fano variety), and $L_1,L_2\subset X$ monotone Lagrangians (e.g.~simply connected). Suppose there is a symplectomorphism $\phi\co X\to X$ such that $\phi(L_1)=L_1$, $\phi(L_2)=L_2$. Under a condition involving spin structures, formulated later as Hypothesis~\ref{hyp:equiv_spin},
a version of the open-closed string map provides twisted cohomology classes $[L_1]^\phi\in HF^*(\phi)$, $[L_2]^{\phi^{-1}}\in HF^*(\phi^{-1})$.
Consider the quantum product $[L_1]^\phi *[L_2]^{\phi^{-1}}\in QH^*(X)$ and 
the  map  $\chi\co QH^*(X)\to \Lambda$ which is the integration over $[X]$ (sending the volume form to $1$ and all elements of $H^{<2n}(X)$, seen as elements of $QH^*(X)$, to 0).
Under the assumptions of the next theorem, there is again an action
$\phi\hf\co  HF^*(L_1,L_2)\to HF^*(L_1,L_2)$, with Floer cohomology taken over $\Lambda$.

\begin{theorem}[Elliptic relation]
\label{th:ell_rel_Lag}
Suppose $(X,L_1,L_2)$ are monotone, $\phi\co X\to X$ is a symplectomorphism, $\phi(L_i)=L_i$. If the base field has $\mathrm{char}\neq 2$, suppose the $L_i$ are orientable and Hypothesis~\ref{hyp:equiv_spin} is satisfied (e.g.~the $L_i$ are simply-connected). Then
$$
STr(\phi\hf)=\chi\left([L_1]^\phi *[L_2]^{\phi^{-1}}\right).
$$ 
\end{theorem}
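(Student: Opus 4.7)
The approach parallels the proof of Theorem~\ref{th:ell_rel}: both sides will be interpreted as counts of pseudo-holomorphic maps from a single Riemann surface --- an annulus $A = S^1 \times [0,1]$ carrying a $\phi$-twist along the $S^1$-factor and Lagrangian boundary conditions $L_1, L_2$ on its two boundary circles --- and the equality will follow from the invariance of this count under a change of the conformal modulus of $A$.

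First I would realize the LHS as a Lefschetz-type trace. Since $\phi(L_i)=L_i$, the conjugation of the Floer data by $\phi$ together with a continuation map defines the automorphism $\phi\hf$ of $CF(L_1,L_2)$, and $STr(\phi\hf)$ is computed by counting $\phi$-equivariant Floer strips $\tilde u:\mathbb{R}\times[0,1]\to X$ with $\tilde u(s+1,t)=\phi(\tilde u(s,t))$ and boundary on $L_1\cup L_2$. These descend to holomorphic maps from the $\phi$-twisted annulus $A$ into $X$ (equivalently, to honest holomorphic annuli in the mapping torus of $\phi$, with boundary on the quotients of $L_i\times\mathbb{R}$), directly analogous to the fibration $E_f\to S^1\times\mathbb{R}$ used in the closed-string proof.

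Next, for the RHS, I would stretch $A$ in the $[0,1]$-direction. SFT-style compactness in the monotone setting degenerates a sequence of holomorphic annuli of unbounded modulus into a broken configuration with three pieces: (i) a half-annulus bounded by $L_1$ asymptotic to a $\phi$-Hamiltonian orbit $y_1$; (ii) an infinite Floer cylinder in the fixed-point Floer homology of $\phi$ from $y_1$ to $y_2$; (iii) a half-annulus bounded by $L_2$ asymptotic to $y_2$, where the reversed puncture orientation converts $\phi$ into $\phi^{-1}$. By the definition of the twisted open-closed map, the counts of (i) and (iii) recover the $y_1$- and $y_2$-coefficients of $[L_1]^\phi$ and $[L_2]^{\phi^{-1}}$ respectively; summing over asymptotics, the middle cylinder implements the Poincar\'e pairing $HF(\phi)\otimes HF(\phi^{-1})\to\Lambda$, which coincides with $\chi$ applied to the quantum product, producing the RHS.

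Finally, the two counts agree because the parametrized moduli of twisted holomorphic annuli over the 1-parameter space of conformal moduli of $A$ is a cobordism whose two boundary limits are realized by the LHS and RHS configurations above. The principal obstacle, exactly as in the closed case, is signs: the special spin structures on $L_i$ postulated in the hypothesis are needed to orient the half-annulus moduli defining $[L_i]^\phi$ and $[L_i]^{\phi^{-1}}$ compatibly with the orientations on the Floer cylinder moduli and on the annulus moduli. This sign analysis, a Lagrangian analog of the calculation behind Theorem~\ref{th:ell_rel}, is precisely what converts the LHS into a supertrace rather than an ordinary trace. Compactness, gluing, and transversality in the stretching argument are standard in the monotone setting; the main labor is the careful orientation comparison.
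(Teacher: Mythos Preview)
Your proposal is correct and follows essentially the same approach as the paper: both sides are interpreted as counts of pseudo-holomorphic sections of a fibration over an annulus with monodromy $\phi$ around the core circle and Lagrangian boundary conditions $L_1,L_2$, with the LHS corresponding to an annulus long in one direction (obtained by gluing the continuation strips computing the trace) and the RHS to an annulus long in the other (obtained by gluing two half-cylinders for the open--closed classes and a middle cylinder for the pairing), and the equality follows from modulus-independence of the count. The only cosmetic difference is that the paper phrases the RHS step as gluing the three pieces into the annulus, whereas you phrase it as an SFT degeneration of the annulus into the three pieces; these are the two directions of the same gluing/compactness argument.
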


If $\phi^k=\id$ and the fixed loci $L_i^\phi\subset X^\phi$ are smooth and orientable, the $q^0$-term of the right hand side equals
the classical homological intersection $[L_1^\phi]\cdot[L_2^\phi]\in \Z$ inside $X^\phi$, where $[L_i^\phi ]\in H_{\dim_\R X/2}(X;\Z)$.
On the other hand,  eigenvalue decomposition of $\phi\hf$ implies that the left hand side equals $a\cdot q^0$ with $a\in \C$,  $|a|\le\dim_\Lambda HF^*(L_1,L_2)$. The elliptic relation yields the following analogue of Proposition~\ref{prop:hf_bound_fixp_notrans}.

\begin{proposition}
 \label{prop:hf_bound_lag}
Under the assumptions of Theorem~\ref{th:ell_rel_Lag}, if $\phi^k=\id$ and the fixed loci $L_i^\phi,X^\phi$ are smooth and orientable then
$$
\dim_\Lambda HF^*(L_1,L_2)\ge \left| [L_1^\phi]\cdot[L_2^\phi]\right|.
$$
\end{proposition}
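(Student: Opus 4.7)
The plan is to apply the Lagrangian elliptic relation (Theorem~\ref{th:ell_rel_Lag}), which provides the identity
$$
STr(\phi\hf) \;=\; \chi\bigl([L_1]^\phi *[L_2]^{\phi^{-1}}\bigr)\in\Lambda,
$$
and then to analyse each side separately using the finite-order hypothesis $\phi^k=\id$. First I would bound the left-hand side by showing $STr(\phi\hf)=a\cdot q^0$ for some $a\in\C$ with $|a|\le\dim_\Lambda HF(L_1,L_2)$. Then I would identify the $q^0$-coefficient of the right-hand side with the classical intersection number $[L_1^\phi]\cdot[L_2^\phi]\in\Z$ inside $X^\phi$. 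The desired inequality follows immediately: the elliptic relation forces all $q^{>0}$-terms on the right to vanish a posteriori, so $a=[L_1^\phi]\cdot[L_2^\phi]$.

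For the upper bound on $STr(\phi\hf)$, the key point is that when $\phi^k=\id$ we may choose the continuation data defining $\phi\hf$ to be strictly $k$-periodic, so that $(\phi\hf)^k$ is the continuation map of a loop of data, hence canonically the identity on $HF(L_1,L_2)$. The eigenvalues of $\phi\hf$ over an algebraic closure of $\Lambda$ are therefore $k$-th roots of unity, and in particular lie in $\C\subset\Lambda$. Consequently $STr(\phi\hf)\in\C\cdot q^0$, and applying the triangle inequality to the eigenvalue decomposition on $HF^0(L_1,L_2)\oplus HF^1(L_1,L_2)$ gives $|a|\le\dim_\Lambda HF(L_1,L_2)$.

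For the right-hand side, I would compute the $q^0$-coefficient of $\chi\bigl([L_1]^\phi*[L_2]^{\phi^{-1}}\bigr)$ by choosing $\phi$-equivariant almost complex structures and auxiliary data near $X^\phi\supset L_i^\phi$. With equivariant data, the rigid configurations contributing to the $q^0$ part of the quantum product and the counit are constant pseudo-holomorphic curves, which must be supported in the fixed locus $X^\phi$; there the twisted Floer classes $[L_i]^\phi$ restrict to the ordinary fundamental classes $[L_i^\phi]\in H_*(X^\phi)$, and $\chi$ reads off their classical transverse intersection number in $X^\phi$. Combining this identification with the bound on $a$ yields $|[L_1^\phi]\cdot[L_2^\phi]|=|a|\le\dim_\Lambda HF(L_1,L_2)$.

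The main obstacle is this last step: rigorously identifying the $q^0$-coefficient of $\chi\bigl([L_1]^\phi*[L_2]^{\phi^{-1}}\bigr)$ with $[L_1^\phi]\cdot[L_2^\phi]$ requires achieving $\phi$-equivariant transversality (so that rigid $q^0$-configurations in $X$ genuinely live in $X^\phi$), and checking that the distinguished spin structures on $L_i$ appearing in Theorem~\ref{th:ell_rel_Lag} induce the orientations on $L_i^\phi$ used to define the classical intersection number, so that signs match. The finite-order argument bounding $|STr(\phi\hf)|$ is by comparison standard.
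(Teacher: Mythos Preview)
Your approach is essentially the same as the paper's: apply Theorem~\ref{th:ell_rel_Lag}, bound $|STr(\phi\hf)|$ using finite order of $\phi$, and identify the $q^0$-coefficient of the right-hand side with the classical intersection $[L_1^\phi]\cdot[L_2^\phi]$.

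Two small corrections are worth noting. First, in the Lagrangian setting the paper only obtains $(\phi\hf)^k=\pm\id$ (Lemma~\ref{lem:lag_ell_rel_square}), not $\id$; the sign ambiguity comes from the spin-structure data. This is harmless for your bound, since the eigenvalues are then $2k$-th roots of unity and one still gets $STr(\phi\hf)=a\cdot q^0$ with $|a|\le\dim_\Lambda HF(L_1,L_2)$. Second, the paper does not attempt genuine $\phi$-equivariant transversality for the $q^0$-identification. Instead it uses a PSS-type argument as in Lemma~\ref{lem:action_hf_homology_fixloc}: choose $s$-independent $\phi$-invariant data $(J,H)$ so that $s$-independent sections become Morse gradient flowlines of $H|_{X^\phi}$, show these flowline configurations are regular and are the only zero-area solutions, and then perturb to regular $(s,t)$-dependent data while preserving the zero-area count. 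So the rigid $q^0$-configurations are Morse flowlines in $X^\phi$ (constant over the pair-of-pants, flowlines over the half-cylinders) rather than literally constant curves, and this avoids the equivariant transversality problem you flagged as the main obstacle.
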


As our Lagrangians are monotone, we can pass from $\Lambda$-coefficients to the base field (e.g.~$\C$ or $\Z/2\Z$) without changing the dimensions of Floer cohomology \cite[Remark~4.4]{WeWo09}. So Proposition~\ref{prop:hf_bound_lag} gives the same bound on $\dim  HF^*(L_1,L_2;\C)$ or $\dim HF^*(L_1,L_2;\Z/2\Z)$. However,
the proof of  Proposition~\ref{prop:hf_bound_lag} crucially uses Theorem~\ref{th:ell_rel_Lag} over $\Lambda$, as can  be seen from the sketch we presented.

As a simple application of Proposition~\ref{prop:hf_bound_lag} we can recover the following known fact: $\R\P^n\subset \C\P^n$ is not self-displaceable by a Hamiltonian isotopy, as $\dim HF^*(\R\P^n,\R\P^n;\allowbreak \Z/2\Z)\ge 1$. When $n$ is even, this is true because the Euler characteristic of $\R\P^n$ equals 1 over a characteristic 2 field. When $n$ is odd, consider the hyperplane reflection $\ii$ on $\C\P^n$ so that $(\R\P^n)^\ii=\R\P^{n-1}$ and apply Proposition~\ref{prop:hf_bound_lag}. 

In Appendix~\ref{app:growth_ring} we provide a more interesting application of  Proposition~\ref{prop:hf_bound_lag}.
Namely, we prove that for $L\subset X$ as in Theorem~\ref{th:twist_inf_grass}, and if $X$ is in addition Fano and even-dimensional,
there is an isomorphism of rings $HF^*(L,L;\C)\cong \C[x]/x^2$.
For Lagrangian spheres in the cubic surface, this was proved by Sheridan~\cite{She13}, and after the present paper had appeared, it was observed by Biran and Membrez~\cite[Subsection~1.3.2]{BM14} that for a Lagrangian sphere in a projective hypersurface, which is Fano and of degree at least 3, the isomorphism $HF^*(L,L;\C)\cong \C[x]/x^2$ follows from the known structure of $QH^*(X)$, regardless of the complex dimension of $X$. Our method is completely different: it does not use any knowledge of $QH^*(X)$, and works for hypersurfaces in Grassmannians as well as in some more abstract cases discussed in Appendix~\ref{app:growth_ring}.

\begin{remark}
The action $\phi\hf$ on $HF^*(L_1,L_2)$ (as well the actions in the case of two commuting symplectomorphisms) can be  defined using functors coming from Lagrangian correspondences \cite{WeWo10a,WeWo10b}. It is possible that the two versions of the elliptic relation admit a generalisation for  Lagrangian correspondences.
\end{remark}

\subsection{Outline of proof of Theorem~\ref{th:twist_inf_grass}}
We have already mentioned that Theorem~\ref{th:twist_inf_grass} holds for topological reasons when $\dim X$ is odd. Suppose therefore that $\dim_\C Gr(k,n)$ is odd, so that $\dim_\C X$ is even. The Grassmannian has an involution $\ii$ whose fixed locus contains an even-dimensional connected component $\tilde \Sigma\subset Gr(k,n)$. For example, when $k=1$ we can take the involution $(x_1:x_2:x_3:x_4 : \ldots : x_n)\mapsto ({-x_1}:{-x_2}:
{-x_3} :x_4:\ldots :x_n)$ and $\tilde \Sigma=\P^2(x_1:x_2:x_3)$.

The key idea of reducing Theorem~\ref{th:twist_inf_grass} to Proposition~\ref{prop:hf_bound_fixp_notrans} is the following construction performed in Section~\ref{sec:van_sph_construct}. We construct a smooth divisor $X\subset Gr(k,n)$ invariant under $\ii$ such that the fixed locus $X^\ii$ of the involution $\ii|_X$ contains an odd-dimensional connected component $\Sigma=\tilde \Sigma\cap X$.
Next, we construct two $\ii$-invariant $|\cO(d)|$-vanishing Lagrangian spheres $L_1,L_2\subset X$ which intersect each other transversely once. Moreover, the fixed loci $L_i^\ii\coloneqq L_i\cap \Sigma$, $i=1,2$, are Lagrangian spheres in $\Sigma$ which intersect each other transversely once, see Figure~2.
This is where we need $d\ge 3$.

\begin{figure}[h]
\includegraphics{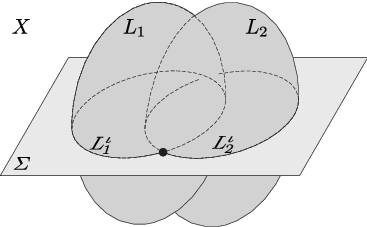}
\label{fig:InvSpheresFig}
\caption{Invariant Lagrangian spheres $L_1$ and $L_2$ used in the proof of Theorem~\ref{th:twist_inf_grass}.}
\end{figure}

Theorem~\ref{th:twist_inf_grass} is proved in Section~\ref{sec:proofs_spheres_divs}.
Consider the product of iterated Dehn twists $\tau_{L_1}^{2k}\tau_{L_2}^{2k}$. Because $L_1,L_2$ are $\ii$-invariant, $\tau_{L_1}^{2k}\tau_{L_2}^{2k}$ can be made $\ii$-equivariant. The Lefschetz number of $(\tau_{L_1}^{2k}\tau_{L_2}^{2k})|_{\Sigma}=\tau_{L_1^\ii}^{2k}\tau_{L_2^\ii}^{2k}$ on $\Sigma$ is equal to $c-4k^2$, where $c$ is a constant. This follows from the Picard-Lefschetz formula and crucially uses  the fact $\dim \Sigma$ is odd. If $\dim \Sigma$ were even, the trace would be independent of $k$. 
Consequently by Proposition~\ref{prop:hf_bound_fixp_notrans}, $\dim HF^*(\tau_{L_1}^{2k}\tau_{L_2}^{2k})$ grows  with $k$. 

Finally we note that $L_1,L_2$ from our construction can be taken one to another by a symplectomorphism of $X$. This means $\tau_{L_1}$ and $\tau_{L_2}$ are conjugate. If $\tau_{L_1}^{2k}$ was Hamiltonian isotopic to $\id$, then so would be $\tau_{L_2}^{2k}$ and
the product $\tau_{L_1}^{2k}\tau_{L_2}^{2k}$. This contradicts  the growth of Floer cohomology from above,
and proves Theorem~\ref{th:twist_inf_grass} for the 
specially constructed
$|\cO(d)|$-vanishing Lagrangian sphere $L_1\subset X$. 
If $X'$ is another smooth divisor linearly equivalent to $X$ and $L'\subset X'$ is
 another $|\cO(d)|$-vanishing Lagrangian sphere, Lemma~\ref{lem:van_unique}
says there is a symplectomorphism $X\to X'$ taking $L$ to $L'$.
This implies  Theorem~\ref{th:twist_inf_grass} in general.


\subsection{An extension of Theorem~\ref{th:twist_inf_grass}}
\label{subsec:gen_theorems}
Theorem~\ref{th:twist_inf_grass} is a particular case of the more general, but also more technical 
theorem which we now state.
Let $\cL$ be a very ample line bundle over a K\" ahler manifold $Y$.
It gives an embedding $Y\subset \P^N\coloneqq \P H^0(Y,\cL)^*$.

Suppose $\ii\co Y\to Y$ is a holomorphic involution which lifts to an automorphism of $\cL$.
The map $\ii$ induces a linear involution on $H^0(Y,\cL)^*$, splitting it into the direct sum of the $\pm 1$
 eigenspaces   $H^0(Y,\cL)_\pm^*$. Let $\Pi_\pm\subset \P^N$ be the projectivisations of these eigenspaces.
The fixed locus $Y^\ii\subset Y$ of the involution $\ii$ is:
$$Y^\ii=(\Pi_+\sqcup \Pi_-)\cap Y,$$ 
where the intersection is taken inside $\P^N$.
It is automatically smooth, but can have many connected components because the intersections $\Pi_+\cap Y$, $\Pi_-\cap Y$
may be disconnected.

\begin{theorem}
\label{th:twist_inf_general}
Under the above notation and assumptions,
fix $d\ge 3$ and
let $H^0(Y,\cL^{\otimes d})_\pm$ denote the $\pm 1$-eigenspace of the involution on $H^0(Y,\cL^{\otimes d})$ induced by $\ii$. Further, suppose one of the following:
\begin{enumerate}
 \item[(a)] $d$ is even, and

\noindent
$Y^\ii$ contains a connected component $\tilde \Sigma$ such that $\dim_\C\tilde \Sigma$ is even;
\item[(b)] $d$ is odd, 

\noindent
there is a smooth divisor in the linear system $\P H^0(Y,\cL^{\otimes d})_+$, and

\noindent
$\Pi_+\cap Y$ contains a connected component $\tilde \Sigma$ such that $\dim_\C\tilde \Sigma$ is even.
\end{enumerate}
Let $X\subset Y$ be a smooth divisor in the linear system $|\cL^{\otimes d}|$ and $L\subset X$ an $|\cL^{\otimes d}|$-vanishing Lagrangian sphere.
Denote by $\tau_L$ the Dehn twist around $L$, and
assume $X$ satisfies the $W^+$ condition.
Then the Hamiltonian isotopy class of $\tau_L$ is an element of infinite order in the group $Symp(X)/Ham(X)$.
The same is true if we replace  symbols $+$ with symbols $-$ in Case (b).
\end{theorem}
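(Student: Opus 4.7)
The plan is to follow verbatim the outline of the proof of Theorem~\ref{th:twist_inf_grass}, replacing every Grassmannian-specific construction by its analogue for the pair $(Y,\cL)$ together with the involution $\ii$, and to conclude via Proposition~\ref{prop:hf_bound_fixp_notrans}. Concretely, I will exhibit, for every $k\in\Z$, an $\ii$-equivariant symplectomorphism of the form $\tau_{L_1}^{2k}\tau_{L_2}^{2k}$ whose restriction to a smooth component $\Sigma$ of $X^\ii$ has Lefschetz number growing quadratically in $k$; Proposition~\ref{prop:hf_bound_fixp_notrans} then forces $\dim_\Lambda HF(\tau_{L_1}^{2k}\tau_{L_2}^{2k})$ to be unbounded in $k$, and a conjugation argument together with Lemma~\ref{lem:van_unique} transports the infinite-order conclusion from the specifically constructed $L_1$ to an arbitrary $|\cL^{\otimes d}|$-vanishing sphere $L\subset X$.

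The main obstacle, and the first step, is the geometric construction itself. One has to produce a smooth $\ii$-invariant divisor $X\in|\cL^{\otimes d}|$ such that $\Sigma:=\tilde\Sigma\cap X$ is a smooth connected component of $X^\ii$ (so in particular $\dim_\C\Sigma=\dim_\C\tilde\Sigma-1$ is odd), together with two $\ii$-invariant $|\cL^{\otimes d}|$-vanishing Lagrangian spheres $L_1,L_2\subset X$ meeting transversely in a single point and whose fixed loci $L_i^\ii\subset\Sigma$ are Lagrangian spheres in $\Sigma$ again meeting transversely in a single point. This is the analogue of Proposition~\ref{prop:a2_in_sym_div} in the abstract K\"ahler setting, and the dichotomy (a)/(b) enters precisely here: one must choose the defining section of $X$ inside the appropriate eigenspace of $\ii$ acting on $H^0(Y,\cL^{\otimes d})$ so that $X$ cuts $\tilde\Sigma$ in a smooth divisor of $\tilde\Sigma$. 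When $d$ is even any $\ii$-invariant section suffices and a generic one is transverse to $\tilde\Sigma$, while for $d$ odd one must work inside $\P H^0(Y,\cL^{\otimes d})_+$, whose non-emptiness with smooth members is the additional hypothesis imposed in case (b). The bound $d\ge 3$ is needed so that inside $|\cL^{\otimes d}|$ one can produce an $\ii$-equivariant Lefschetz pencil from which the two distinct spheres $L_1,L_2$ arise as vanishing cycles of two different nodal degenerations.

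Once this configuration is in place, the remaining steps reduce to a direct calculation. Supporting each model Dehn twist in an $\ii$-invariant Weinstein neighbourhood of $L_i$ makes $\tau_{L_1}^{2k}\tau_{L_2}^{2k}$ $\ii$-equivariant, with restriction $\tau_{L_1^\ii}^{2k}\tau_{L_2^\ii}^{2k}$ to $\Sigma$. Since $\dim_\C\Sigma$ is odd, each Picard-Lefschetz transformation $T_i$ induced by $L_i^\ii$ is unipotent with $(T_i-\id)^2=0$, and a short linear algebra computation in the span of $[L_1^\ii],[L_2^\ii]\subset H^{\dim_\C\Sigma}(\Sigma)$, using the single transverse intersection $L_1^\ii\cap L_2^\ii$, yields
\[
L\bigl(\tau_{L_1^\ii}^{2k}\tau_{L_2^\ii}^{2k}\bigr)\;=\;\chi(\Sigma)\mp 4k^2.
\]
Proposition~\ref{prop:hf_bound_fixp_notrans} applied to the commuting pair $(\tau_{L_1}^{2k}\tau_{L_2}^{2k},\,\ii)$ then gives $\dim_\Lambda HF(\tau_{L_1}^{2k}\tau_{L_2}^{2k})\ge|\chi(\Sigma)\mp 4k^2|$, which is unbounded in $k$; hence $\tau_{L_1}^{2k}\tau_{L_2}^{2k}$ is not Hamiltonian isotopic to $\id$ for $|k|\gg 0$. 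Lemma~\ref{lem:van_unique} provides a symplectomorphism of $X$ taking $L_1$ to $L_2$, so $\tau_{L_1}$ and $\tau_{L_2}$ are conjugate in $\Symp(X)$; were $\tau_{L_1}^{2k}$ Hamiltonian isotopic to $\id$ for some $k\neq 0$, so would be $\tau_{L_2}^{2k}$ and their product, a contradiction. Thus $\tau_{L_1}$ has infinite order in $\Symp(X)/\Ham(X)$, and Lemma~\ref{lem:van_unique} propagates the conclusion to any $|\cL^{\otimes d}|$-vanishing sphere $L\subset X$. The `$-$' variant of case (b) is obtained by running the same argument with the $-1$-eigenspace in place of the $+1$-eigenspace throughout.
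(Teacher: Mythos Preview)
Your proposal is correct and follows essentially the same route as the paper: invoke Proposition~\ref{prop:a2_in_sym_div} to produce the $\ii$-invariant configuration $(X,L_1,L_2,\Sigma)$, compute the Lefschetz number of $(\tau_{L_1}^{2k}\tau_{L_2}^{2k})|_{X^\ii}$ via Picard--Lefschetz (using that $\dim_\C\Sigma$ is odd), apply Proposition~\ref{prop:hf_bound_fixp_notrans}, and finish with the conjugacy argument and Lemma~\ref{lem:van_unique}. One small imprecision: Proposition~\ref{prop:hf_bound_fixp_notrans} bounds $\dim_\Lambda HF$ by the Lefschetz number on the \emph{entire} fixed locus $X^\ii$, not just $\Sigma$, so the constant in your estimate should be $\chi(X^\ii)$ rather than $\chi(\Sigma)$ (the Dehn twists act as the identity on the other components of $X^\ii$); this does not affect the unboundedness conclusion, and the paper simply writes the constant as $c$.
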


Like Theorem~\ref{th:twist_inf_grass}, Theorem~\ref{th:twist_inf_general} is new when $\dim_\C X$ is even and $X$ is not Calabi-Yau.

In Case (a), the existence of a smooth $\ii$-invariant divisor $X$ follows from Bertini's theorem, so it is not included as a condition of the theorem.
In Case~(b), an invariant divisor can sometimes be found using a strong Bertini theorem  \cite[Corollary~2.4]{DiHa91}, which gives the following.

\begin{lemma}
\label{lem:exist_smooth_inv_div}
Under the conditions of Theorem~\ref{th:twist_inf_general}, let $d$ be odd.
There is a smooth divisor in the linear system $\P H^0(Y,\cL^{\otimes d})_\pm$
if
every connected component of $\Pi_{\mp}\cap Y$ has dimension less than $\half \dim Y$.
\end{lemma}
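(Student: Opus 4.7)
The plan is to identify the base locus of the linear subsystem $\P H^0(Y,\cL^{\otimes d})_\pm$ with $\Pi_{\mp}\cap Y$, and then invoke the strong Bertini theorem of Diaz and Harbater \cite[Corollary~2.4]{DiHa91}, which produces a globally smooth member of a linear system whose base locus has every component of dimension strictly less than $\half\dim Y$.

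I would first compute the base locus of the invariant subsystem; the anti-invariant case is symmetric. The lift of $\ii$ to $\cL$ acts on the fiber $\cL_y$ over a point $y\in Y^\ii$ by a scalar $\ii_y\in\{+1,-1\}$ according to whether $y\in\Pi_+\cap Y$ or $y\in\Pi_-\cap Y$. Because $d$ is odd, the induced action on $(\cL^{\otimes d})_y$ is again $\ii_y$. For an invariant section $s\in H^0(Y,\cL^{\otimes d})_+$ and a point $y\in\Pi_-\cap Y$, the identity $s(y)=(\ii^* s)(y)=\ii_y\cdot s(y)=-s(y)$ forces $s(y)=0$, so $\Pi_-\cap Y$ lies in the base locus. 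Conversely, outside $\Pi_-\cap Y$ an invariant section nonvanishing at $y$ can be produced directly: if $y\in\Pi_+\cap Y$, any section $s$ with $s(y)\neq 0$ makes $s+\ii^* s$ nonvanishing at $y$ by the same calculation with a $+$ sign; if $y\notin Y^\ii$, very ampleness of $\cL^{\otimes d}$ (which holds for $d\ge 3$) yields a section $s$ with $s(y)\neq 0$ and $s(\ii(y))=0$, so that $s+\ii^* s$ is an invariant section nonzero at $y$. Thus the base locus of $\P H^0(Y,\cL^{\otimes d})_+$ equals $\Pi_-\cap Y$ exactly, and similarly the base locus of $\P H^0(Y,\cL^{\otimes d})_-$ equals $\Pi_+\cap Y$.

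With the base locus identified, the dimension hypothesis of the lemma becomes precisely the dimension hypothesis of \cite[Corollary~2.4]{DiHa91} applied to the smooth projective variety $Y$, from which it follows that a general member of $\P H^0(Y,\cL^{\otimes d})_\pm$ is smooth at every point, including at the base locus. This delivers the desired smooth invariant (resp.~anti-invariant) divisor.

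The main obstacle I anticipate is verifying that the $\pm$-eigenspace is ``large enough'' in the tangent and jet directions required by the strong Bertini theorem: away from the base locus one must separate points and tangent vectors. Since $\cL^{\otimes d}$ is very ample, the full system separates jets, and restricted to the $\pm$-eigenspace the separation survives because $\ii$ acts non-degenerately on the normal bundle to $Y^\ii$, so the local equivariant decomposition of jets at a point of $\Pi_\mp\cap Y$ still yields all the transverse jet directions from within the $\pm$-eigenspace. Once this equivariant jet bookkeeping is in place, the application of Diaz-Harbater is immediate and the lemma follows.
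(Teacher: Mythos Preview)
Your approach is the same as the paper's: identify the base locus of $\P H^0(Y,\cL^{\otimes d})_\pm$ with $\Pi_\mp\cap Y$ and invoke the strong Bertini theorem of Diaz--Harbater. Your fiberwise computation of the base locus is in fact cleaner than the paper's, which instead writes out explicit homogeneous coordinates on the ambient $\P^N$ and checks which monomials lie in the $\pm$-eigenspace.

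There is, however, a genuine gap. The version of strong Bertini you are citing requires not only the dimension bound but also that the base locus be \emph{reduced and nonsingular} as a scheme. You have identified the base locus set-theoretically, and smoothness follows from the standing hypothesis that $Y^\ii$ is smooth; but you never verify reducedness. This is where the non-degeneracy of $\ii$ is actually used: because $d\ii$ acts by $-\id$ on the normal bundle to $Y^\ii$, the intersection $Y\cap\Pi_\mp$ is Bott-transverse inside $\P^N$, so the scheme-theoretic base locus is reduced. Without this check, the appeal to Diaz--Harbater is incomplete.

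Your final paragraph about equivariant jet separation is unnecessary and slightly off target. The strong Bertini theorem, as stated, imposes no jet-separation hypotheses on the linear system away from the base locus; smoothness of a general member off the base locus is automatic in characteristic zero by ordinary Bertini. The only nontrivial issue is at the base locus, and that is handled entirely by the reduced, smooth, low-dimension conditions---not by any jet argument. You should replace that paragraph with the one-line reducedness check above.
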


As in the beginning of the introduction, we have the following corollary.
\begin{corollary}
\label{cor:fund_gp_general}
Under conditions of Theorem~\ref{th:twist_inf_general}, let
$\gamma\subset \P H^0(Y,\cL^{\otimes d})\setminus \Delta$
be a meridian loop, defined analogously to one in the paragraph before Corollary~\ref{cor:fund_gp_grass}.
Then
 $$
[\gamma]\in \pi_1\left(\P H^0(Y,\cL^{\otimes d})\setminus \Delta\right) \quad \text{is\ an\ element\ of\ infinite\ order.}
$$
\end{corollary}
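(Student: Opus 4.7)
The plan is to deduce Corollary~\ref{cor:fund_gp_general} from Theorem~\ref{th:twist_inf_general} by exactly the same mechanism that was sketched for Corollary~\ref{cor:fund_gp_grass}. First I would set up the symplectic parallel transport for the family of smooth divisors $X_t\in |\cL^{\otimes d}|$. For any path of smooth divisors $X_t$, $t\in[0,1]$, one obtains a symplectomorphism $X_0\to X_1$ whose Hamiltonian isotopy class depends only on the homotopy class of the path rel endpoints; this gives a well-defined monodromy homomorphism
$$
\mu\colon \pi_1\bigl(\P H^0(Y,\cL^{\otimes d})\setminus \Delta,\, [X]\bigr)\ \longrightarrow\ \Symp(X)/\Ham(X).
$$
Because $\mu$ is a group homomorphism, it suffices to show that $\mu([\gamma])$ is an element of infinite order in $\Symp(X)/\Ham(X)$.

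Next I would identify $\mu([\gamma])$ with the Dehn twist around a vanishing sphere. By the definition of a meridian, $\gamma$ is the boundary of a holomorphic disk $D\subset \P H^0(Y,\cL^{\otimes d})$ meeting $\Delta$ transversely at a single point corresponding to a divisor $X_0\in\Delta$ with a unique ordinary double point. Restricting the universal family to $D$ produces a one-parameter degeneration of smooth divisors to a nodal divisor; the classical symplectic Picard--Lefschetz theorem (as used implicitly in the discussion preceding Corollary~\ref{cor:fund_gp_grass}) then identifies $\mu([\gamma])$, up to Hamiltonian isotopy, with the Dehn twist $\tau_L$ around the vanishing cycle $L\subset X$ of this degeneration. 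By Definition~\ref{def:lin_vanish}, $L$ is an $|\cL^{\otimes d}|$-vanishing Lagrangian sphere, so Theorem~\ref{th:twist_inf_general} applies and gives that $\tau_L$ has infinite order in $\Symp(X)/\Ham(X)$.

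Combining the two steps, $\mu([\gamma])=[\tau_L]$ is of infinite order, hence so is $[\gamma]$, proving the corollary. The only real content here is the identification $\mu([\gamma])=\tau_L$, which is the standard dictionary between meridians in discriminant complements and Dehn twists around vanishing cycles; there is no new geometric difficulty beyond what was already needed to state Theorem~\ref{th:twist_inf_general}. I expect the main (minor) obstacle to be purely bookkeeping: checking that the hypotheses on $Y$, $\cL$, $\ii$, and $d$ that appear in Theorem~\ref{th:twist_inf_general} are precisely what is being carried over, so that the meridian loop produces a Dehn twist around an $|\cL^{\otimes d}|$-vanishing sphere rather than some other vanishing cycle class.
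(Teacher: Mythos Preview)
Your proposal is correct and follows essentially the same route as the paper: the paper's proof simply says the corollary follows from Theorem~\ref{th:twist_inf_general} together with Lemma~\ref{lem:dehn_tw_and_monodromy}, which is precisely the identification of the monodromy around a meridian with the Dehn twist $\tau_L$ that you spell out. Your write-up is more detailed but the content is identical.
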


We prove these statements 
in Section~\ref{sec:proofs_spheres_divs}. 
We have earlier explained the plan of proof of Theorem~\ref{th:twist_inf_grass};
actually we follow this plan to prove the general Theorem~\ref{th:twist_inf_general} first, and then derive
Theorem~\ref{th:twist_inf_grass} from it.

\subsection{Equivariant transversality approaches}
This  subsection is not used in the rest of the paper.
Computations of Floer cohomology in the presence of a symplectic involution were discussed by Khovanov and Seidel \cite{KhoSe02}, and Seidel and Smith \cite{SeSm10}. Both papers imposed restrictive conditions on the involution which allow one to choose a regular equivariant almost complex structure for computing Floer cohomology.

In \cite{SeSm10}, it is proved that
$$
\dim HF^*(L_1,L_2;\Z/2)\ge \dim HF^*(L_1^\ii, L_2^\ii;\Z/2)
$$
when there exists a stable normal trivialisation of the normal bundle to $X^\ii$ respecting the $L_i$.
In particular, the Chern classes of this normal bundle should vanish. 
The right-hand side is Floer cohomology inside $X^\ii$, where $L_i^\ii$ are the fixed loci of $L_i$ and $X^\ii$ is the fixed locus of $X$. Sometimes the right-hand side is easier
to compute than the left-hand side (e.g.~when all intersection points $L_1^\ii\cap L_2^\ii$ have the same sign).
However, the condition on the normal bundle makes this estimate inapplicable
to divisors in $Gr(k,n)$.

In a very special case,  \cite{KhoSe02} proves that
$$
\dim HF^*(L_1,L_2;\Z/2)=| L_1^\ii\cap L_2^\ii|
$$
where the right hand side is the unsigned count of intersection points.
The assumption is, roughly, that the fixed locus $X^\ii$ has real dimension $2$
and $L_1^\ii,L_2^\ii\subset X^\ii$ are curves having minimal intersection in their homotopy class.
One could prove a $\C$-version of this equality if the $L_i$ admit $\ii$-equivariant Pin strictures,
and apply it 
 to divisors in $\P^{n-1}=Gr(1,n)$, i.e.~projective hypersurfaces (thus giving an alternative proof of Theorem~\ref{th:twist_inf_grass} in this case).
However, it cannot be applied to divisors in general Grassmannians. When $k>2$, $Gr(k,n)$ has no holomorphic involution
with a  connected component of complex dimension 2;
this is easy to check
because all holomorphic automorphisms $Gr(k,n)$ come from linear ones on $\C^n$,
with a single exception when $n=2k$ \cite[Theorem~1.1 (Chow)]{Cow89}.

\subsection*{Acknowledgements}
The author is indebted to his supervisor Ivan~Smith for regular discussions and many helpful suggestions that shaped this paper. 
As already mentioned, one of the main results of this paper (Theorem~\ref{th:ell_rel}) was generously proposed  by Paul~Seidel. The author is also grateful to Baptiste~Chantraine, Jonny~Evans, Yank\i~Lekili, Andreas~Ott, Dmitri~Panov and Oscar~Randal-Williams
for useful discussions and comments, and the anonymous referee for suggesting valuable improvements to the exposition.

The author is funded by the Cambridge Commonwealth, European and~International Trust, and acknowledges travel funds from King's College, Cambridge.
\end{section}


\begin{section}{The elliptic relation}
\label{sec:ell_rel}
This section proves the elliptic relation for symplectomorphisms (together with its corollary, Proposition~\ref{prop:hf_bound_fixp_notrans}) and sketches a proof of the Lagrangian elliptic relation. 

\subsection{Floer cohomology and continuation maps}
\label{subsec:flo_hom}

\begin{definition}[The $W^+$ condition]
\label{def:weak_monot}
A symplectic manifold $(X,\omega)$ of dimension $2n$ satisfies the $W^+$ condition  \cite{Sei97}, if for every $A\in \pi_2(X)$
$$
2-n\le c_1(A) \le -1 \quad \Longrightarrow \quad \omega(A)\le 0.
$$ 
\end{definition}

Let $(X,\omega)$ be a compact symplectic manifold satisfying the $W^+$ condition. Fix a symplectomorphism $f\co X\to X$.
In this subsection we recall the definition of Floer cohomology $HF^*(f)$; basic references are
\cite{MDSaBook, DoSa94, Sei97}.

Take a family of  $\omega$-tame almost complex structures $J_s$ on $X$, 
and a family of Hamiltonian functions $H_s\co X\to \R$, 
$s\in \R$. They must be $f$-periodic:
\begin{equation}
\label{eq:h_j_period_cyl}
H_s=H_{s+1}\circ f, \qquad J_s=f^* J_{s+1}. 
\end{equation}

By $X_{H_s}$ we denote the Hamiltonian vector field of $H_s$,
and by $\psi_s\co X\to X$ the Hamiltonian flow:
\begin{equation}
\label{eq:hs_flow}
d \psi_s/d s=X_{H_s}\circ \psi_s, \qquad \psi_0=\id. 
\end{equation}

The following equation on $u(s,t)\co \R^2\to X$ is called Floer's equation:
\begin{equation}
\label{eq:Floer_dif}
 \bd u/\bd t + J_s(u)(\bd u/\bd s-X_{H_s}(u))=0.
\end{equation}

This equation comes with the periodicity conditions
\begin{equation}
\label{eq:u_period_cyl}
u(s+1,t)=f(u(s,t)).
 \end{equation}

Denote 
\begin{equation}
\label{eq:f_H}
f_H\coloneqq \psi_1^{-1}\circ f \in \Symp(X).
\end{equation}
(The correct notation would be $f_{H_s}$, but we stick to $f_H$ for brevity). 
Suppose the fixed points of $f_H$ are isolated and non-degenerate (that is to say, for every $x\in \fix f_H$,
$\ker (\id-df_H(x))=0$). 
Then finite energy solutions to Floer's equation have the following convergence property.
There exist points $x,y$ 
such that
\begin{equation}
\label{eq:sol_asympt}
 \lim_{t\to -\infty}u(s,t)=\psi_s(x), \quad \lim_{t\to +\infty}u(s,t)=\psi_s(y), \quad x,y\in \fix f_H.
\end{equation}

For $x,y\in \fix f_H$, let $\M(x,y;J_s,H_s)$ be the moduli space of all solutions
to Floer's equation~(\ref{eq:Floer_dif}) with limits~(\ref{eq:sol_asympt}).
For regular $J_s,H_s$, the moduli space is a  manifold which is a disjoint union of
the $k$-dimensional pieces $\M^k(x,y;J_s,H_s)$. 
They can be oriented in a way consistent with gluings;
such orientations are called coherent \cite{FloHo93}.
There is an $\R$-action on $\M(x,y;J_s,H_s)$, and once a coherent orientation is fixed,
$\M^1(x,y;J_s,H_s)/\R$ is a set of signed points.

The Floer complex associated to $(f;J_s,H_s)$ is the $\Lambda$-vector space generated by points in $\fix f_H$:
$$
CF^*(f;J_s,H_s)\coloneqq \bigoplus_{x\in \fix f_H} \Lambda\langle x\rangle.
$$

The differential on $CF^*(f;J_s,H_s)$ is defined on a generator $x\in \fix f_H$ by:
\begin{equation}
\label{eq:floer_dif_sum}
\bd(x)=\ \ 
\sum_{
\mathclap{\begin{smallmatrix}
y\in \fix f_H \\
u\in \M^1(x,y;J_s,H_s)/\R
\end{smallmatrix}}
}\ \  \
\pm q^{\omega(u)}\cdot y.
\end{equation}

Here the signs are those of the points in $\M^1(x,y;J_s,H_s)/\R$, and
\begin{equation}
\label{eq:omega_area_cyl}
 \omega(u)=\int_{s\in [0,1]}\int_{t\in \R} u^* \omega\ dsdt.
\end{equation}

Suppose $J_s,H_s$ and $J_s',H_s'$ are two regular choices of almost complex structures and Hamiltonians
that satisfy the $f$-periodicity condition (\ref{eq:h_j_period_cyl}).
Choose a family of $\omega$-tame complex structures 
$J_{s,t}$ and Hamiltonians $H_{s,t}$, $s,t\in \R$, such that for each $t$, Condition~(\ref{eq:h_j_period_cyl}) is satisfied and
\begin{equation}
\label{eq:cont_htopy_support}
J_{s,t}\equiv J_s', \ H_{s,t}\equiv H_{s}'\ \mathrm{ for }\ t\ \mathrm{ near}\ -\infty, \quad
J_{s,t}\equiv J_s, \ H_{s,t}\equiv H_{s}\ \mathrm{ for }\ t\ \mathrm{ near}\ +\infty. 
\end{equation}
We call $J_{s,t},H_{s,t}$ a homotopy from $J_s',H_s'$ to $J_s,H_s$.

Define
$\M(x,y;J_{s,t},H_{s,t})$ to be the set of solutions to Floer's continuation equation
\begin{equation}
\label{eq:Floer_cont}
 \bd u/\bd t + J_{s,t}(u)(\bd u/\bd s-X_{H_{s,t}}(u))=0
\end{equation}
with periodicity condition~(\ref{eq:u_period_cyl}) and asymptotic conditions:
\begin{equation}
\label{eq:sol_asympt_cont}
 \lim_{t\to -\infty}u(s,t)=\psi_s(x), \quad \lim_{t\to +\infty}u(s,t)=\psi_s(y), \quad x\in \fix f_{H'},\ y\in \fix f_H.
\end{equation}

If $J_{s,t},H_{s,t}$ are regular, $\M(x,y;J_{s,t},H_{s,t})$ is a manifold.
Let $\M^0(x,y;J_{s,t},H_{s,t})$ be its 0-dimensional component, which is  a collection of
signed points once coherent orientations (consistent with those for $J_s,H_s$ and $J_s',H_s'$) are fixed. Define the continuation map
$C_{J_{s,t},H_{s,t}}\co CF^*(f;J_s',H_s')\to CF^*(f;J_s,H_s)$ by
\begin{equation}
\label{eq:cont_map}
C_{J_{s,t},H_{s,t}}(x)=\ \ 
\sum_{
\mathclap{\begin{smallmatrix}
y\in \fix f_{H} \\
u\in \M^0(x,y;J_{s,t},H_{s,t})
\end{smallmatrix}}
}\ \ \ 
\pm q^{\omega(u)}\cdot y. 
\end{equation}
Here $x\in \fix f_{H'}$. For regular $J_{s,t},H_{s,t}$, it is a chain map inducing an isomorphism on cohomology.
So one can actually identify the homologies $HF^*(f;J_s,H_s)$ for all generic $J_s,H_s$
to get a single space $HF^*(f)$. It is called Floer cohomology of $f$. It is a $\Z/2$-graded  vector space over $\Lambda$; we will recall  the grading later.

\subsection{Commuting symplectomorphisms induce actions on Floer cohomology}
\label{subsec:comm_symp}

As before, let $X$ be a compact symplectic manifold satisfying the $W^+$ condition. Let $f,g\co X\to X$ be two commuting symplectomorphisms;
we will now define an automorphism $g\hf\co  HF^*(f)\to HF^*(f)$.

Pick generic $J_s,H_s$ that satisfy (\ref{eq:h_j_period_cyl}) to get the complex $CF^*(f;J_s,H_s)$.
Denote
\begin{equation}
\label{eq:j_h_pull_g}
J_s'\coloneqq g^*J_s,\quad H_s'\coloneqq  H_s \circ g.
\end{equation}
This gives us another complex
$CF^*(f;J_s',H_s')$.
Note that
$g\circ \psi_1=\psi_1'$.
Let us check that $ f_H=f_{H'}\circ g$:
$$
f_{H'}\circ g(x)=(\psi_1')^{-1}fg(x)=(\psi_1')^{-1}gf(x)=\psi_1^{-1}f(x)=f_H(x).
$$
Consequently,
 $g$ induces a bijection $\fix f_H\to \fix f_{H'}$.
 Extend it by $\Lambda$-linearity to
$$
g\ps\co CF^*(f;J_s,H_s)\to  CF^*(f;J_s',H_s').
$$
Similarly, the composition map
$u\mapsto g\circ u$ is an isomorphism 
$$\M(x,y;J_s,H_s)\stackrel{\cong}\longrightarrow \M(g(x),g(y);J_s',H_s').$$ 
So $g\ps$ is tautologically a chain map inducing an isomorphism on cohomology.

Now fix a homotopy $J_{s,t},H_{s,t}$ from $J_s',H_s'$ to $J_s,H_s$ as in (\ref{eq:cont_htopy_support}).
Consider the composition
$$
CF^*(f;J_s,H_s)\xrightarrow{g\ps} CF^*(f; J_s',H_s') \xrightarrow{C_{J_{s,t},H_{s,t}}} CF^*(f;J_s,H_s).
$$

\begin{definition}[Action on Floer cohomology]
\label{def:action_hf}
We define $g\hf\co HF^*(f;J_{s,t},H_{s,t})\to HF^*(f;J_{s,t},H_{s,t})$
to be the map induced by the composition of chain maps $C_{J_{s,t},H_{s,t}}\circ g\ps$. We will frequently suppress 
the choice of $J_s,H_s$  and simply write
$g\hf\co HF^*(f)\to HF^*(f)$. Also, we will sometimes denote the chain-level map by the same symbol,
$g\hf=C_{J_{s,t},H_{s,t}}\circ g\ps$.

As a part of this definition, the signs in  formula (\ref{eq:cont_map}) for $C_{J_{s,t},H_{s,t}}$
must come from a coherent orientation as explained in Subsection~\ref{subsec:coherent_orient_cont} below.
In particular, for any $x\in \fix f_H$, the sign of an element $u\in \M^0(g(x),x;J_{s,t},H_{s,t})$  is canonical, see Definition~\ref{def:sign_sec_cyl},
and denoted by $\sgn(u)$.
\end{definition}

\begin{remark}
On the level of cohomology, 
$g\hf$ does not depend on the chosen homotopy
$J_{s,t},H_{s,t}$; this follows from the fact that the continuation map $C_{J_{s,t},H_{s,t}}$ does not depend 
on the choice of homotopy,  see e.g.~\cite[Section~12.1]{MDSaBook}.
\end{remark}

\begin{remark}[An analogue in Morse cohomology]
\label{rem:morse_cont}
A similar construction is known in Morse cohomology \cite[4.2.2]{SchwBook}. Suppose $H\co X\to \R$ is a Morse-Smale function on a Riemannian manifold $(X,g)$, and $f\co X\to X$ is a diffeomorphism.
Let $C^*(H)$ be the Morse complex of $X$ generated by points in $\Crit(H)$.
Pick homotopies $H_t$ from $H\circ f$ to $H$, and $g_t$ from $f^*g$ to $g$, and
define $f^*\co C^*(H)\to C^*(H)$ as follows. Take $x,y\in \Crit(H)$ and
let the coefficient of  $f^*(x)$ on $y$ be
the signed count of flowlines of the gradient $\nabla_{g_t}H_t$ going from $f(x)$ to $y$.
The chain map $f^*$ induces an automorphism of $H^*(X)$ known from elementary topology.

In particular, let us note for future use that the Lefschetz number $L(f)$ can be computed as the sum, over $x\in \Crit(H)$, of $\nabla_{g_t}H_t$-flowlines going from $f(x)$ to $x$, counted with signs.
\end{remark}

\begin{remark}[Relation to  Seidel elements]
If $g$ is Hamiltonian isotopic to $f$ through symplectomorphisms commuting with $f$,
then one can show $g\hf\co  HF^*(f)\to HF^*(f)$ is the identity. If $g$ is just Hamiltonian isotopic to $f$,  $g\hf$ need not be the identity, but can be understood as follows.
Take a homotopy $g_t$, $g_0=g,g_1=f$. The path $\gamma_t\coloneqq g_t^{-1}f g_t$ is actually a loop in $Symp(X)$: $\gamma(0)=\gamma(1)=f$ because $g^{-1}fg=f$. To this path one associates its Seidel element, $S(\gamma)\in QH^*(M;\Lambda)$ \cite{Sei97}.
Let $*$ be the quantum multiplication $QH^*(M;\Lambda)\otimes HF^*(f)\to HF^*(f)$. One can check
that
$
g\hf(x)= S(\gamma)*x
$
for any $x\in HF^*(f)$. We will not use this observation, so we omit its proof.
\end{remark}

\subsection{Iterations}
If $f,g$ commute then $f,g^k$ also commute for any iteration $g^k$.

\begin{lemma}
\label{lem:action_powers}
The following two automorphisms of $HF^*(f)$ are equal:
$$
(g\hf)^k=(g^k)\hf.
$$ 
\end{lemma}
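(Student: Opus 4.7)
The plan is to establish a chain-level intertwining between the pushforward $g\ps$ and continuation maps, and then argue by induction on $k$. The underlying point is that composition with $g$ commutes with the Floer continuation equation provided one also pulls back the data by $g$.

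First I would observe that if $u:\R\times\R\to X$ solves the continuation equation (\ref{eq:Floer_cont}) with data $(J_{s,t},H_{s,t})$ and periodicity (\ref{eq:u_period_cyl}), then $g\circ u$ solves the same equation with the pulled-back data $(g^*J_{s,t},H_{s,t}\circ g)$, and satisfies the same periodicity since $fg=gf$. The assignment $u\mapsto g\circ u$ yields an energy-preserving bijection $\M(x,y;J_{s,t},H_{s,t})\cong \M(g(x),g(y);g^*J_{s,t},H_{s,t}\circ g)$. Transporting coherent orientations by $g$ (which is allowed since the differential $dg$ gives a bundle isomorphism between the relevant linearised Cauchy--Riemann operators), one obtains the chain-level identity
\begin{equation}
\label{eq:g_inter_cont_plan}
g\ps\circ C_{J_{s,t},H_{s,t}}=C_{g^*J_{s,t},H_{s,t}\circ g}\circ g\ps.
\end{equation}
The analogous identity for the Floer differential was already verified in Subsection~\ref{subsec:comm_symp}; the proof here is the same, run with the non-translation-invariant data.

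Next I would prove the lemma by induction on $k$. Pick generic $J_s,H_s$ and a homotopy $(J_{s,t},H_{s,t})$ defining the continuation map $C$ used in Definition~\ref{def:action_hf}, so that $g\hf=C\circ g\ps$ on chains. Applying (\ref{eq:g_inter_cont_plan}) to the inner $C$ and using $(g\ps)^2=(g^2)\ps$ tautologically on chains, one gets
$$
(g\hf)^2=C\circ g\ps\circ C\circ g\ps
=\bigl(C\circ C_{g^*J_{s,t},H_{s,t}\circ g}\bigr)\circ (g^2)\ps .
$$
The bracketed factor is a composition of two continuation chain maps from $CF(f;(g^2)^*J_s,H_s\circ g^2)$ to $CF(f;J_s,H_s)$, hence on homology it equals any other continuation map between these complexes, in particular the one used to define $(g^2)\hf$. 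Thus $(g\hf)^2=(g^2)\hf$ on homology. The induction step is identical: in $(g\hf)^{k+1}$ one pushes the rightmost $g\ps$ through each continuation $C$ in turn, collecting $(g^{k+1})\ps$ on the right and a composition of $k{+}1$ continuation maps on the left that descends on homology to the continuation used to define $(g^{k+1})\hf$.

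The main obstacle to watch is the sign bookkeeping in (\ref{eq:g_inter_cont_plan}). Unlike the delicate sign formula behind Theorem~\ref{th:ell_rel}, here no genuine SFT sign identity is needed: because $g$ is a symplectomorphism of $X$ acting on moduli spaces simply by post-composition, the coherent-orientation data can be transported by $g$ itself, and the bijection of moduli spaces is canonically orientation-preserving. Once this is checked, the remainder of the proof is a formal manipulation with continuation maps and the standard fact that the homology class of a continuation map depends only on its source and target Floer data.
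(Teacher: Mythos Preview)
Your proposal is correct and follows essentially the same approach as the paper: both establish the chain-level intertwining identity $g\ps\circ C_{J_{s,t},H_{s,t}}=C_{g^*J_{s,t},H_{s,t}\circ g}\circ g\ps$ by post-composing continuation solutions with $g$, and then reduce $(g\hf)^k$ to $(g^k)\ps$ followed by a composite of continuation maps, which on homology agrees with the continuation defining $(g^k)\hf$. The only cosmetic differences are that you phrase the passage from $k=2$ to general $k$ as an induction and discuss orientations explicitly, whereas the paper treats $k=2$ and declares the general case analogous.
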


\begin{proof}
We prove the case $k=2$; the general case is analogous. Take $J_s,H_s$ as in~(\ref{eq:h_j_period_cyl}), $J_s',H_s'$ pulled by $g$ as in (\ref{eq:j_h_pull_g}) 
 and the homotopy $J_{s,t},H_{s,t}$ as in (\ref{eq:cont_htopy_support}). Denote 
$$
J_s^{''}=g^*J_s'=(g^2)^*J_s, \quad H_s^{''}= H_s'\circ g= H_s\circ g^2.
$$
Compare the two compositions given below. The first one induces $(g\hf)^2$ on the homological level:
\begin{multline*}
CF^*(f;J_s,H_s)\xrightarrow{g\ps} \\
CF^*(f; J_s',H_s') \xrightarrow{C_{J_{s,t},H_{s,t}}} CF^*(f;J_s,H_s)
\xrightarrow{g\ps} CF^*(f; J_s',H_s') \\
\xrightarrow{C_{J_{s,t},H_{s,t}}} CF^*(f;J_s,H_s)
\end{multline*}
The second composition gives $(g^2)\hf$, by a gluing theorem for continuation maps:
\begin{multline*}
CF^*(f;J_s,H_s)\xrightarrow{g\ps} \\
CF^*(f; J_s',H_s') \xrightarrow{g\ps} CF^*(f;J_s'',H_s'')
\xrightarrow{C_{J_{s,t}',H'_{s,t}}} CF^*(f; J_s',H_s') \\
\xrightarrow{C_{J_{s,t},H_{s,t}}} CF^*(f;J_s,H_s)
\end{multline*}
By definition of $J_{s,t}',H_{s,t}'$ (\ref{eq:j_h_pull_g}), $g$ maps Floer solutions (\ref{eq:Floer_cont}) in $\M(x,y;J_{s,t},H_{s,t})$ to those in $\M(g(x),g(y);J_{s,t}',H_{s,t}')$. This means
$$
C_{J_{s,t}',H_{s,t}'}\circ g\ps=g\ps \circ  C_{J_{s,t},H_{s,t}}.
$$
This proves Lemma~\ref{lem:action_powers}.
\end{proof}


\subsection{Supertrace}
We continue to use  notation from Subsection~\ref{subsec:flo_hom}.

\begin{definition}[Grading on Floer's complex]
\label{def:floer_grading}
Let $x\in \fix f_H$.
We say $\deg x=0$ if $\mathrm{sign} \det(\id-df_H(x))>0$
and $\deg x=1$ otherwise.
\end{definition}

This makes $CF^*(f;J_s,H_s)$ a $\Z_2$-graded vector space over $\Lambda$. Floer's differential has degree $1$,
so the cohomology is also $\Z_2$-graded: $HF^*(f)=HF^0(f)\oplus HF^1(f)$.

\begin{definition}[Supertrace]
\label{def:str}
 Let $V=V^0\oplus V^1$ be a $\Z_2$-graded vector space and $\phi\co V\to V$
 an automorphism of zero degree, i.e.~ $\phi(V^0)\subset V^0$, $\phi(V^1)\subset V^1$.
Then $STr (\phi)\coloneqq  Tr (\phi|_{V^0})-Tr(\phi|_{V^1})$.
\end{definition}

The automorphism $g\hf$ from Definition~\ref{def:action_hf} has zero degree,
so it has well-defined supertrace which is an element of $\Lambda$.
Supertraces can be computed on the chain level, since all our chain complexes are finite-dimensional.
Therefore the following is just a restatement of definitions.

\begin{lemma}
\label{lem:trace_on_chains}
Let $X$ be a symplectic manifold satisfying the $W^+$ condition and $f,g\co X\to X$ be two commuting symplectomorphisms.
Take $J_s',H_s'$ as in (\ref{eq:j_h_pull_g}) and a homotopy $J_{s,t},H_{s,t}$ from $J_s',H_s'$ to $J_s,H_s$ as in (\ref{eq:cont_htopy_support}).
Then 
$$
STr(g\hf\co HF^*(f)\to HF^*(f))=\ \ \sum_{
\mathclap{\begin{smallmatrix}
 x\in \fix f_H,\\
 u\in \M^0(g(x),x;J_{s,t},H_{s,t})
\end{smallmatrix}}
}\ \ \ 
(-1)^{\deg x}\cdot \sgn(u)\cdot q^{\omega(u)}
$$
where $\sgn(u)=\pm 1$ is defined in Definition~\ref{def:sign_sec_cyl}.
\end{lemma}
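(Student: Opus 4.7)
The plan is to prove Lemma~\ref{lem:trace_on_chains} by simply unpacking the chain-level description of $g\hf$ and reading off the diagonal matrix entries, exploiting the finite-dimensionality of the Floer complex.

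First I would invoke the standard fact that, since each graded piece of $CF(f;J_s,H_s)$ is a finite-dimensional $\Lambda$-vector space (the Hamiltonian $H_s$ can be chosen so that $\fix f_H$ is finite), the supertrace of the induced map $g\hf$ on $HF(f)$ equals the supertrace of the chain-level endomorphism $C_{J_{s,t},H_{s,t}}\circ g\ps$ on $CF(f;J_s,H_s)$. This is the usual Hopf-trace argument: for a chain map of degree zero commuting with the differential on a finite-dimensional graded complex, the super-Lefschetz number on chains equals that on homology.

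Next I would compute the matrix of $C_{J_{s,t},H_{s,t}}\circ g\ps$ in the basis $\fix f_H$. By definition of $g\ps$, it sends a basis element $x\in\fix f_H$ to the basis element $g(x)\in\fix f_{H'}$ of $CF(f;J'_s,H'_s)$ (with a sign, whose role is discussed below). Applying formula~(\ref{eq:cont_map}) for $C_{J_{s,t},H_{s,t}}$ to $g(x)$ yields
\[
g\hf(x)=\sum_{y\in\fix f_H,\ u\in\M^{0}(g(x),y;J_{s,t},H_{s,t})} \pm\, q^{\omega(u)}\, y.
\]
Setting $y=x$ extracts the diagonal coefficient, so
\[
STr(g\hf)=\sum_{x\in\fix f_H}(-1)^{\deg x}\sum_{u\in\M^{0}(g(x),x;J_{s,t},H_{s,t})}\pm\, q^{\omega(u)},
\]
by Definition~\ref{def:str}.

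The only remaining point is to identify the $\pm$ sign appearing on each $u\in\M^{0}(g(x),x;J_{s,t},H_{s,t})$ with the canonical sign $\sgn(u)$ from Definition~\ref{def:sign_sec_cyl}. This is precisely the content of the clause in Definition~\ref{def:action_hf} stating that, for these particular \emph{diagonal} solutions (whose negative asymptotic is $g$ applied to the positive asymptotic), the product of the coherent-orientation sign in the continuation count and the sign picked up by $g\ps$ is independent of all auxiliary choices; so by definition this combined sign is $\sgn(u)$. Substituting this equality into the expression above yields the claimed formula.

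The only potentially delicate step is the sign identification in the last paragraph, since the signs contributed separately by $g\ps$ and by the coherent orientation on $\M^{0}(g(x),x;J_{s,t},H_{s,t})$ each depend on the choice of coherent orientations, and only their product is intrinsic. However, this is exactly what Definition~\ref{def:sign_sec_cyl} is set up to encode, so for the purposes of this lemma one only needs to cite that definition — the genuine analytic/linear-algebraic content of the canonicity assertion is handled elsewhere in the paper.
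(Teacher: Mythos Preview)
Your proposal is correct and follows essentially the same approach as the paper: compute the chain-level supertrace of $C_{J_{s,t},H_{s,t}}\circ g\ps$ by reading off diagonal entries, and cite Definition~\ref{def:action_hf} for the identification of the diagonal sign with $\sgn(u)$. One small inaccuracy: in the paper's setup $g\ps$ is the tautological $\Lambda$-linear extension of the bijection $\fix f_H\to\fix f_{H'}$ and carries no sign of its own; the canonicity of $\sgn(u)$ on diagonal solutions comes entirely from the choice of coherent orientations described in Subsection~\ref{subsec:coherent_orient_cont}, not from a product of two separately non-canonical signs.
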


\begin{proof}
Pick a generator  $x\in \fix f_H$ of $CF^*(f;J_s,H_s)$.
Rewriting the definition of $g\hf$  we get:
$$
g\hf(x)=\ \ \sum_{\mathclap{{u\in \M^0(g(x),y;J_{s,t},H_{s,t})}}}\ \ \ \pm q^{\omega(u)}\cdot y.
$$
When we put $x=y$, the sign $\pm$ is substituted by
$\sgn(u)$ according to Definition~\ref{def:action_hf}.
\end{proof}



\subsection{Holomorphic sections}
\label{subsec:hol_sect}
It is useful to reformulate the definition of Floer cohomology using holomorphic sections
as in e.g.~\cite{SeiL4D}.
If $f\co X\to X$ is a symplectomorphism, consider the mapping cylinder
\begin{equation}
\label{eq:E_f}
E_f\coloneqq \frac{X\times \R^2_{s,t}}{(x,s,t)\sim (f(x),s+1,t)}.
\end{equation}
There is a closed 2-form $\omega_{E_f}$ on $E_f$
which comes from $\omega\oplus 0$ on $X\times\R^2$, and
a natural fibration $p\co E_f\to S^1\times \R$ whose fibres are symplectomorphic to $X$.

The $f$-periodicity condition (\ref{eq:u_period_cyl}) on $u\co \R^2\to X$ means that it can be seen as a section
 $u\co S^1\times \R\to E_f$.
 Floer's equation itself (\ref{eq:Floer_dif}) is equivalent to $u$ being a holomorphic section
with respect to the standard complex structure $j_{S^1\times\R}$ on $S^1\times \R$ and an almost complex structure $\tilde J$ on
$E_f$. In other words, Floer's equation (\ref{eq:Floer_dif}) becomes:
\begin{equation}
\label{eq:holoc_sect}
du+ \tilde J\circ du\circ j_{S^1\times \R}=0.
\end{equation}
The almost complex structure $\tilde J\coloneqq \tilde J(J_s,H_s)$
is determined by $J_s$ and $H_s$,  see e.g.~\cite[Section~8.1]{MDSaBook}. 
Analogously, if $J_{t,s},H_{t,s}$ is a continuation homotopy (\ref{eq:cont_htopy_support}),
the moduli space $\M(x,y;J_{s,t},H_{s,t})$
consists of sections $u\co S^1\times \R\to E_f$ that are holomorphic with respect to 
$j_{S^1\times \R}$
and an almost complex structure $\tilde J(J_{s,t},H_{s,t})$ on $E_f$.


\subsection{Asymptotic linearised Floer's equation}
\label{subsec:asymp_lin_floer}
Let $E_f$ be as in (\ref{eq:E_f}).
We denote
$$T^vE_f=\ker dp$$ 
the vertical tangent bundle of $E_f$.
The almost complex structures $J_s$ turn $T^vE_f$ into a complex vector bundle.
Take a solution $u(s,t)$ to Floer's equation, $u\in \M(x,y;J_s,H_s)$. We regard it as a section  $u(s,t)\co  S^1\times \R\to E_f$  as explained above. 
The pullback $u^* T^vE_f$ is a complex vector bundle over $S^1\times \R$.
By linearising Floer's equation  (\ref{eq:holoc_sect}), one gets a map
\begin{equation}
\label{eq:linear_floer}
D_u\co H^{1,p}(u^* T^{v}E_f) \to L^p(\Omega^{0,1}(u^*T^{v}E_f)).
\end{equation}
Here $\Omega^{0,1}(u^*T^{v}E_f)$ consists of bundle maps $T(S^1\times \R)\to u^*T^vE_f$ which are
complex-antilinear with respect to $\tilde J$ and the standard complex structure on $S^1\times \R$.

We know from (\ref{eq:sol_asympt}) that $u$ extends to $S^1\times \{\pm \infty\}$:
$u(s,-\infty)=\psi_s(x)$ where $\psi_s$ is the flow (\ref{eq:hs_flow}) of $X_{H_s}$.
(The same is true of $t\to +\infty$ and the point $y$. We will now speak of $t\to -\infty$ only.)
Choose a complex trivialisation
\begin{equation}
\label{eq:phi_x_triv}
\Phi_{x}\co u^*T^vE_f|_{S^1\times \{-\infty\}}\to S^1\times \R^{2n}.
\end{equation}
We choose a single trivialisation for each point $x$; this is possible because $u(s,-\infty)=\psi_s(x)$.
The operator $D_u$ is asymptotic, as $t\to -\infty$, to the  operator
\begin{equation}
\label{eq:L_x_cyl}
L_{A(s)}=\bd/\bd t+ J_0\bd/\bd s+A(s)\co  H^{1,p}(S^1\times \R,\R^{2n})\to L^p(S^1\times \R,\R^{2n}).
\end{equation}

Here $J_0$ is the standard complex structure on $\R^{2n}$,
 and $A(s)$ is a map $S^1\to Hom(\R^{2n},\R^{2n})$ taking values in symmetric matrices. It is known that $A(s)$ is determined by $J_s,H_s$, the point $x$ and the chosen trivialisation $\Phi(x)$. It does not depend on $u$ as long as the $t\to -\infty$ asymptotic of $u$ stays fixed.
 A reference for these facts is (among others) the thesis of Schwarz  \cite[Definition~3.1.6, Theorem~3.1.31]{SchwThesis}.
Although  that thesis only considers the case $f=\id$,
the proofs of the results we use are valid for any $f$, as these are general results about certain Fredholm operators on bundles over $S^1$ and $S^1\times \R$.

\begin{lemma}[{\cite[proof of Lemma~3.1.33]{SchwThesis}}]
\label{lem:asymp_oper_cyl_solutions}
Consider the operator
$$
J_0\bd/\bd s + A(s)\co C^\infty(S^1,\R^{2n})\to C^\infty(S^1,\R^{2n}).
$$
 There is a family of linear maps 
$
\Psi(s)\co  [0,1]\to Sp(\R^{2n})
$
such that 
\begin{equation}
\label{eq:psi}
\left(J_0\bd/\bd s +A(s)\right) \Psi(s)=0, \quad \Psi(0)=\id
\end{equation}
 and  $\Psi(1)\co \R^{2n}\to\R^{2n}$
coincides, under the trivialisation $\Phi_x$ (\ref{eq:phi_x_triv}), with the differential $df_H(x)$.\qed
\end{lemma}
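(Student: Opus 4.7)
The plan is to exhibit $\Psi(s)$ as the linearised Hamiltonian flow $d\psi_s(x)$ read off in the trivialisation $\Phi_x$, then use the twisting of $E_f$ by $f$ to recognise $\Psi(1)$ as $df_H(x)$.

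First I would construct the candidate path. Define
$$
\tilde\Psi(s)\ :=\ \Phi_x(s)\,\circ\, d\psi_s(x)\,\circ\, \Phi_x(0)^{-1}:\ \R^{2n}\to\R^{2n},
$$
where $d\psi_s(x):T_xX\to T_{\psi_s(x)}X$ is the linearised Hamiltonian flow along the asymptotic orbit $\gamma(s)=\psi_s(x)$. Clearly $\tilde\Psi(0)=\id$. The linearised flow equation $\partial_s d\psi_s(x)=DX_{H_s}(\psi_s(x))\circ d\psi_s(x)$, together with the standard expression $DX_{H_s}=J_s\cdot\mathrm{Hess}(H_s)$ in a symplectic frame, shows that $\tilde\Psi$ solves a first order linear ODE $\partial_s\Psi=J_0 B(s)\Psi$ with $B(s)$ symmetric; the matrix $A(s)$ that appears in (\ref{eq:L_x_cyl}) is by definition precisely $B(s)$, i.e.~the Hessian of $H_s$ evaluated at $\psi_s(x)$ pulled back via $\Phi_x$, plus lower order correction terms coming from $\Phi_x$ and $J_s$. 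A reference for the precise form of $A(s)$ is \cite[Definition~3.1.6]{SchwThesis}. Thus $\tilde\Psi$ and $\Psi$ satisfy the same ODE with the same initial condition, so by ODE uniqueness $\Psi(s)=\tilde\Psi(s)$ for all $s\in[0,1]$. Symmetry of $A(s)$ makes $J_0 A(s)\in\mathfrak{sp}(2n,\R)$, so $\Psi(s)\in Sp(2n,\R)$.

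Next I would unpack the gluing condition for $\Phi_x$. Because $E_f$ is the mapping torus of $f$, a trivialisation of $u^*T^vE_f$ over the loop $S^1\times\{-\infty\}\subset E_f$ lifts to a trivialisation over the interval path $s\mapsto \psi_s(x)$, $s\in[0,1]$, satisfying a compatibility condition forced by the identification $(y,0)\sim(f(y),1)$; namely $\Phi_x(0)=\Phi_x(1)\circ df(x)$. Plugging this into $\Psi(1)=\Phi_x(1)\circ d\psi_1(x)\circ\Phi_x(0)^{-1}$ and using $f(x)=\psi_1(x)$ (which is the fixed point condition $f_H(x)=x$), the combination $d\psi_1(x)$ and $df(x)^{\pm 1}$ collapses to $df_H(x)=d\psi_1(x)^{-1}\circ df(x)$ acting on $T_xX$, which is exactly the content of the claim.

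The only real difficulty is bookkeeping: one must pin down the orientation of the gluing map on $u^*T^vE_f$ and the order in which $\Phi_x(1)$ and $\Phi_x(0)$ appear, in order to get $df_H(x)$ rather than a conjugate or an inverse. Once these conventions are fixed (following \cite[\S 3.1]{SchwThesis}, extending Schwarz's $f=\id$ treatment to the twisted case) the argument is routine ODE uniqueness plus linear algebra; no new analysis is needed.
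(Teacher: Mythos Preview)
The paper does not supply a proof of this lemma: it is stated with a terminal \qed and attributed to \cite[proof of Lemma~3.1.33]{SchwThesis}. Your sketch is the standard argument behind that citation---identify the fundamental solution $\Psi(s)$ of the asymptotic ODE with the linearised Hamiltonian flow $d\psi_s(x)$ read in the trivialisation $\Phi_x$, then use the mapping-torus gluing $\Phi_x(0)=\Phi_x(1)\circ df(x)$ to extract $df_H(x)$---so your approach matches the intended one.

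One caution on the final ``collapse''. With your formula $\tilde\Psi(1)=\Phi_x(1)\circ d\psi_1(x)\circ\Phi_x(0)^{-1}$ and the gluing relation $\Phi_x(0)=\Phi_x(1)\circ df(x)$, a direct substitution gives $\Phi_x(1)\circ d\psi_1(x)\circ df(x)^{-1}\circ\Phi_x(1)^{-1}$, which is the $\Phi_x(1)$-representation of $d\psi_1(x)\circ df(x)^{-1}$, i.e.\ of $df_H(x)^{-1}$ (conjugated to $T_{f(x)}X$), not $df_H(x)$ itself. You correctly flag this as bookkeeping, and indeed the discrepancy is absorbed by the precise sign convention in $A(s)$ (whether the asymptotic operator is $J_0\partial_s+A$ or $J_0\partial_s-A$, equivalently whether $A$ encodes $DX_{H_s}$ or $-DX_{H_s}$) together with which endpoint of $\Phi_x$ one uses to interpret $\Psi(1)$. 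For a complete write-up you would want to fix those conventions once and carry them through, rather than asserting the collapse; but the structure of your argument is sound.
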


\begin{remark}
We identify $S^1=\R/\Z$ so points of the circle $s=0$ and $s=1$ are the same.
The statement about $\Psi(1)$ in the lemma above  makes sense because 
$u(0,-\infty)=x$ for some $x\in \fix f_H$, see (\ref{eq:sol_asympt}) and (\ref{eq:hs_flow}).
So $df_H(x)$ acts on $T_x X=u^*T^vE_f|_{(0,-\infty)}$.
The trivialisation (\ref{eq:phi_x_triv}) identifies this space with $\R^{2n}$.
\end{remark}

\begin{remark}
 Given $\Psi(s): [0,1]\to Sp(\R^{2n})$, by solving (\ref{eq:psi}) we get
\begin{equation}
\label{eq:psi_to_a}
A(s)=-J_0(\bd/\bd s\Psi(s))\Psi(s)^{-1}
 \end{equation}
with symmetric $A(s)\co [0,1]\to Hom(\R^{2n},\R^{2n})$. Conversely, we can go from $A(s)$ to $\Psi(s)$ by solving (\ref{eq:psi}) as an ODE.
\end{remark}

\begin{remark} For reader's convenience, we include a correspondence between our notation and that of Schwarz~\cite{SchwThesis}, our notation being on the left in each pair: $s\leftrightarrow t$, $t\leftrightarrow s$, $A(s)\leftrightarrow S^\infty(t)$, $\Psi(s)\leftrightarrow \Psi(t)$, and $D_u$ in our notation corresponds to either $D_u$ or $DF_h$, the latter being the linearisation of Floer's equation at an $h$ which is not necessarily a solution. Equation (\ref{eq:psi}) is \cite[(3.23)]{SchwThesis}.
\end{remark}

\subsection{An index problem on the torus}
\label{subsec:index_prob_torus}
The operator $L_{A(s)}$ (\ref{eq:L_x_cyl}) is Fredholm if and only if $\det (\id-\Psi(1))=\det(\id-df_H(x))$ is non-zero.
Now, for later use, consider variables $(s,t)$ belonging to the torus $S^1\times S^1$
instead of the cylinder $S^1\times \R$. The same formula $(\ref{eq:L_x_cyl})$
gives the operator
$$L_{A(s)}=\bd/\bd t+J_0\bd/\bd s+A(s)\co C^\infty (S^1\times S^1,\R^{2n})\to C^\infty (S^1\times S^1,\R^{2n})$$
which is now Fredholm of zero index for any family of symmetric matrices $A(s)\co S^1\to Hom(\R^{2n},\R^{2n})$.
For the remainder of this subsection, $L_{A(s)}$ denotes the operator on $S^1\times S^1$ and not on the cylinder.

\begin{lemma}
\label{lem:ker_asymp_oper_torus}
 Let $A(s)\co S^1\to Hom(\R^{2n},\R^{2n})$ be a family of symmetric matrices. Suppose $A(s)$ and $\Psi(s)$ satisfy (\ref{eq:psi}).
Then $\dim \ker L_{A(s)}=\dim \ker (\id-\Psi(1))$.
\end{lemma}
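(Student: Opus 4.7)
I will reduce the two-dimensional spectral problem on $S^1\times S^1$ to a one-dimensional one on $S^1$ by separating variables in the $t$-factor.

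After complexifying (which preserves the kernel dimension since $L_{A(s)}$ has real coefficients), any $u\in\ker L_{A(s)}$ admits a Fourier expansion $u(s,t)=\sum_{k\in\Z}u_k(s)\,e^{2\pi i k t}$ with $u_k\in C^\infty(S^1,\C^{2n})$. Substituting into $\partial_t u+J_0\partial_s u+A(s)u=0$ and matching Fourier modes yields, for each $k$,
\[
M u_k=-2\pi i k\cdot u_k,\qquad M:=J_0\partial_s+A(s),
\]
so $u_k$ is an eigenfunction of $M$ on $L^2(S^1,\C^{2n})$ with eigenvalue $-2\pi i k$.

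Next, $M$ is formally self-adjoint with respect to the standard Hermitian inner product: a routine integration by parts (using $s$-periodicity) together with $J_0^{\,T}=-J_0$ and $A(s)^T=A(s)$ gives $\langle Mu,v\rangle=\langle u,Mv\rangle$. Hence every eigenvalue of $M$ is real, forcing $-2\pi i k\in\R$ and therefore $k=0$. Only the zero Fourier mode survives, so $u(s,t)=u_0(s)$ is independent of $t$ and lies in $\ker M$.

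Finally, I identify $\ker M$ with $\ker(\id-\Psi(1))$. A function $\phi\in C^\infty(S^1,\C^{2n})$ lies in $\ker M$ iff $J_0\phi'+A(s)\phi=0$; by Lemma~\ref{lem:asymp_oper_cyl_solutions} and uniqueness of solutions of linear ODEs, every solution on $[0,1]$ has the form $\phi(s)=\Psi(s)\phi(0)$, and the $S^1$-periodicity $\phi(0)=\phi(1)=\Psi(1)\phi(0)$ is equivalent to $\phi(0)\in\ker(\id-\Psi(1))$. This yields a linear isomorphism $\ker M\cong\ker(\id-\Psi(1))$; since $\Psi(1)$ is a real matrix its complex and real kernels have equal dimension, so returning to the real setting gives the claim. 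The only mildly nontrivial step is the self-adjointness verification, but it is immediate from the symmetry of $A(s)$ and the skew-symmetry of $J_0$, so I do not anticipate any serious obstacle.
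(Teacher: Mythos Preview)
Your proof is correct and follows the same overall route as the paper: first show that any element of $\ker L_{A(s)}$ is independent of $t$, then identify the resulting one-variable kernel with $\ker(\id-\Psi(1))$ via the fundamental solution $\Psi(s)$ and periodicity. The only difference is that the paper simply cites \cite[Proof of Lemma~3.1.33]{SchwThesis} for the $t$-independence, whereas you supply a self-contained argument via Fourier expansion in $t$ and the self-adjointness of $M=J_0\partial_s+A(s)$; this is exactly the kind of argument that underlies the cited reference, so the two proofs are essentially the same.
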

\begin{proof}
 Any $\xi(s,t)\in \ker L_{A(s)}$ must be independent of $t$, see
\cite[Proof of Lemma~3.1.33]{SchwThesis}, so we write $\xi(s,t)\equiv\xi(s)$. The equation on $\xi(s)$ becomes $(J_0\bd/\bd s+A(s))\xi(s)=0$.
This is an ODE whose solutions are of form $\xi(s)=\Psi(s)v$ for some $v\in \R^{2n}$ by (\ref{eq:psi}).
There are no other solutions by the uniqueness theorem for ODEs, as $v\in \R^{2n}$ sweep out all initial conditions.
In addition, our solutions must close up on the circle, meaning $\xi(1)=\xi(0)$, which forces $\Psi(1)v=v$.
\end{proof}

Let $A_0(s),A_1(s)\co S^1\to Hom(\R^{2n},\R^{2n})$ be two  families of symmetric matrices with $L_{A_0(s)}$, $L_{A_1(s)}$ injective. Choose a generic smooth homotopy $A_\tau(s)$ between them, $\tau\in [0,1]$.
Define $\sgn(L_{A_0(s)},L_{A_1(s)})=(-1)^\epsilon$  where
\begin{equation}
\label{eq:sign_count_torus}
\epsilon =\sum_{\tau\in[0,1]} \dim_\R \ker L_{A_\tau(s)}.
\end{equation}
This sum contains a finite number of non-zero terms as $L_{A_\tau(s)}$ are generically injective, and does not depend modulo 2 on the chosen homotopy.

\begin{lemma}
 \label{lem:sign_count_torus_compute}
For $A_0(s)$, $A_1(s)$ as above and $\Psi_0(s),\Psi_1(s)$ satisfying (\ref{eq:psi}), 
we have $\sgn(L_{A_0(s)},L_{A_1(s)})=\sgn \det (\id-\Psi_0(1))\cdot \sgn \det (\id-\Psi_1(1))$.
\end{lemma}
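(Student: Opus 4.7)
The plan is to reduce everything to an elementary fact about smooth real-valued functions on $[0;1]$, using Lemma~\ref{lem:ker_asymp_oper_torus} to translate between analytic kernels and the symplectic kernels of $\id-\Psi_\tau(1)$.

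First I would fix a generic homotopy $A_\tau(s)$ of symmetric-matrix-valued families, and let $\Psi_\tau(s)\in Sp(\R^{2n})$ be the corresponding solutions of (\ref{eq:psi}) (so $\Psi_\tau$ depends smoothly on $\tau$). The scalar function
$$
f(\tau):=\det\bigl(\id-\Psi_\tau(1)\bigr),\qquad \tau\in[0;1],
$$
is smooth, and is nonzero at the endpoints by hypothesis (since $L_{A_0(s)}$ and $L_{A_1(s)}$ are injective, invoking Lemma~\ref{lem:ker_asymp_oper_torus}). By Lemma~\ref{lem:ker_asymp_oper_torus} applied at each $\tau$, the zeros of $f$ are exactly the $\tau$'s where $L_{A_\tau(s)}$ has a nontrivial kernel, and $\dim\ker L_{A_\tau(s)}=\dim\ker(\id-\Psi_\tau(1))$ at each such $\tau$.

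Next I would use transversality to ensure that, after a further generic choice of the homotopy $A_\tau$, every zero of $f$ is simple: $\dim\ker(\id-\Psi_\tau(1))=1$ and $f$ changes sign transversally at that zero. The relevant submersion statement is that the evaluation map $A\mapsto\Psi_A(1)$ from paths of symmetric matrices to $Sp(\R^{2n})$ is a submersion, as can be seen from variation of parameters: a perturbation $\delta A$ supported near $s_0$ yields $d\Psi(1)=\Psi(1)\Psi(s_0)^{-1}(J_0\,\delta A(s_0))\Psi(s_0)$, and as $\delta A(s_0)$ ranges over symmetric matrices, $J_0\,\delta A(s_0)$ ranges over all of $\mathfrak{sp}$. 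Hence a generic path $\tau\mapsto\Psi_{A_\tau}(1)$ is transverse to the codimension-$1$ locus of symplectic matrices fixing a line and avoids the higher-codimension strata, so at each zero of $f$ the kernel is one-dimensional and the sign of $f$ flips.

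With this in hand the conclusion is immediate: the exponent
$\epsilon=\sum_\tau\dim_\R\ker L_{A_\tau(s)}$ of (\ref{eq:sign_count_torus}) equals the number of zeros of $f$, which equals the number of sign changes of $f$. The parity of the number of sign changes of a continuous function on $[0;1]$ nonvanishing at the endpoints is even if and only if $\sgn f(0)=\sgn f(1)$, equivalently $\sgn f(0)\cdot\sgn f(1)=+1$. Therefore
$$
\sgn(L_{A_0(s)},L_{A_1(s)})=(-1)^\epsilon=\sgn f(0)\cdot\sgn f(1)=\sgn\det(\id-\Psi_0(1))\cdot\sgn\det(\id-\Psi_1(1)),
$$
which is the desired identity. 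The only real content is the transversality step: I would expect this to be the main technical point, because one must justify that the symmetric-matrix constraint on $A_\tau(s)$ still allows enough flexibility to achieve transversality to the wall $\{\det(\id-\Psi)=0\}\subset Sp(\R^{2n})$, and the submersion computation above is what makes this possible.
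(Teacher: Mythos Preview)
Your approach is correct and takes a genuinely different route from the paper. The paper first deforms each $A_i(s)$ to a constant matrix $\tilde A_i=-J_0\log\Psi_i(1)$ along a homotopy that fixes the endpoint $\Psi(1)$; by Lemma~\ref{lem:ker_asymp_oper_torus} the operators stay injective along that homotopy, so this leg contributes $+1$. It then compares the two constant-coefficient operators $L_{\tilde A_0},L_{\tilde A_1}$, where $\ker L_{\tilde A}=\ker\tilde A$, and finishes with the observation $\sgn\det(-J_0\log\Psi(1))=\sgn\det(\id-\Psi(1))$. Your argument bypasses this reduction entirely: you track $f(\tau)=\det(\id-\Psi_\tau(1))$ directly along a single generic homotopy and count its sign changes. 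This is conceptually cleaner and sidesteps the mildly delicate point of defining $\log\Psi_i(1)$.

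One step deserves an extra line of justification. From transversality of the path $\tau\mapsto\Psi_\tau(1)$ to the stratum $\Sigma_1=\{\dim\ker(\id-\Psi)=1\}\subset Sp(\R^{2n})$ you conclude that $f$ changes sign at each crossing. This follows only once you know that $\rho:=\det(\id-\,\cdot\,)$, restricted to $Sp(\R^{2n})$, is a submersion along $\Sigma_1$ (so that $\Sigma_1$ locally separates $\{\rho>0\}$ from $\{\rho<0\}$); transversality to a hypersurface alone does not force a sign change of an auxiliary function. The missing check is short: at $\Psi\in\Sigma_1$ one has $d\rho(\delta\Psi)=-\mathrm{tr}\bigl(\mathrm{adj}(\id-\Psi)\,\delta\Psi\bigr)$, and since $\mathrm{rank}(\id-\Psi)=2n-1$ the adjugate has rank one, say $c\,vw^T$ with $\Psi v=v$, $\Psi^Tw=w$, $c\neq 0$. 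For $\delta\Psi=\Psi J_0S\in T_\Psi Sp$ with $S$ symmetric this gives $d\rho(\delta\Psi)=-c\,w^TJ_0Sv$, and taking $S=uv^T+vu^T$ with $u=-J_0w$ yields $u^TSv=|u|^2|v|^2+(u^Tv)^2>0$. Hence $d\rho|_{T_\Psi Sp}\neq 0$ on $\Sigma_1$, and your sign-flip claim follows. With this sentence added, your proof is complete; your submersion computation for $A\mapsto\Psi_A(1)$ is correct and is exactly what is needed to transport genericity in $Sp(\R^{2n})$ back to homotopies of symmetric-matrix families.
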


\begin{proof}
For $i=0,1$ denote $\tilde \Psi_i(s)=e^{s\log \Psi_i(1)}$, $s\in [0;1]$, so that $\tilde \Psi_i(0)=\Psi_i(0)=\id$ and $\tilde \Psi_i(1)=\Psi_i(1)$.
Let us compute $\tilde A_i(s)$ from $\tilde \Psi_i(s)$ using (\ref{eq:psi_to_a}):
$\tilde A_i(s)=-J_0(\bd/\bd s\tilde \Psi_i(s))\tilde \Psi_i(s)^{-1}=-J_0\log\Psi_i(1)$. We see it is a constant $s$-independent symmetric matrix $\tilde A_i(s)\equiv \tilde A_i$. 
Our first claim is that 
\begin{equation}
\label{eq:index_sign_plus_1}
\sgn(L_{A_i(s)},L_{\tilde A_i})=+1.
\end{equation}
Indeed,  choose the homotopy
$$
(\Psi_i)_\tau(s)=e^{\tau s\log \Psi_i(1)}e^{(1-\tau)\log \Psi_i(s)}
$$
from $\Psi_i(s)$ to $\tilde \Psi_i(s)$, where $\tau \in[0;1]$, and observe this homotopy has fixed endpoints: we have $(\Psi_i)_\tau(0)=\Psi_i(0)$ for each $\tau$, and also $(\Psi_i)_\tau(1)=\Psi_i(1)$. Passing from $(\Psi_i)_\tau(s)$ to
$(A_i)_\tau(s)$ by formula (\ref{eq:psi})
we get the linear homotopy
 $$(A_i)_\tau(s)=\tau A_i(s)+(1-\tau) \tilde A_i$$ from $A_i(s)$ to $\tilde A_i$.
The corresponding  operator $L_{(A_i)_\tau(s)}$ is injective for all $\tau$ by Lemma~\ref{lem:ker_asymp_oper_torus}
because we are given $\ker (\id-\Psi_i(1))=0$.
This implies that $$\sgn(L_{A_i(s)},L_{\tilde A_i})=+1,$$ as desired.

Let us compute $\sgn(L_{\tilde A_0},L_{\tilde A_1})$ for two constant  matrices  $\tilde A_i(s)\equiv \tilde A_i$, $i=0,1$.
By linear algebra, one can find a smooth path of matrices $\tilde A_\tau$ from $\tilde A_0$ to $\tilde A_1$ such that
$(-1)^{\sum_{\tau}\dim\ker \tilde A_\tau}=\sgn\det \tilde A_0\cdot \sgn \det \tilde A_1$.
We will now show that $\dim \ker \tilde A_\tau=\dim \ker L_{\tilde A_\tau}$ for each $\tau$,
and this will immediately imply that
\begin{equation}
\label{eq:index_sign_const_matr}
\sgn(L_{\tilde A_0},L_{\tilde A_1})=\sgn\det \tilde A_0\cdot\sgn \det \tilde A_1.
\end{equation}
For the rest of the paragraph, redenote $\tilde A_\tau$ (for some fixed $\tau$) by $A$: this is an arbitrary symmetric matrix, and if we consider it as an $s$-independent family and solve (\ref{eq:psi_to_a}) with respect to $\Psi$, we get $\Psi(1)=e^{-J_0A}$. By Lemma~\ref{lem:ker_asymp_oper_torus}, we have $\ker L_A=\ker (\id-\Psi(1))=\ker (\id-e^{-J_0A})$. The latter equals $\ker A$, as seen by bringing $A$ to the Jordan normal form.
So (\ref{eq:index_sign_const_matr}) is now justified.
Combining all above, we get
\begin{align*}
\sgn(L_{A_0(s)},L_{A_1(s)})
&=
\sgn(L_{A_0(s)},L_{\tilde A_0})\cdot\sgn(L_{\tilde A_0},L_{\tilde A_1})\cdot\sgn(L_{\tilde A_1},L_{A_1(s)})\\
 &
=\sgn(\det \tilde A_0)\cdot\sgn (\det \tilde  A_1). 
\end{align*}
The first equality is true because we can regard the concatenation of three homotopies between the operators appearing in the middle expression as a single homotopy between the eventual endpoints $L_{A_0(s)}$ and $L_{A_1(s)}$; the second equality follows from 
(\ref{eq:index_sign_plus_1}) and (\ref{eq:index_sign_const_matr}).
Finally, recall  $\tilde A_i=-J_0\log \Psi_i(1)$
and observe that 
$\sgn\det \log\Psi_i(1)=\sgn\det(\id-\Psi_i(1))$.
This completes the proof.
\end{proof}

\subsection{Signs for the action on Floer cohomology}
\label{subsec:coherent_orient_cont}
Let $f,g$ be two commuting symplectomorphisms.
We will now complete Definition~\ref{def:action_hf} of the action $$g\hf\co  HF^*(f)\to HF^*(f)$$ 
by specifying the signs appearing there.

Pick regular $J_s,H_s$ to define
 Floer's complex $CF^*(f;J_s,H_s)$.
For each $x\in \fix f_H$, pick a trivialisation $\Phi_x$ (\ref{eq:phi_x_triv}).
Then for each $x$, we get a unique asymptotic linearised operator
$L_{A_x(s)}$ (\ref{eq:L_x_cyl}).

Let $J_s',H_s'$ be pulled by $g$ (\ref{eq:j_h_pull_g}) and $J_{s,t},H_{s,t}$ be a homotopy  (\ref{eq:cont_htopy_support}).
Let $u\in\M^0(g(x),y;J_{s,t},H_{s,t})$ be
a solution to Floer's continuation equation, where $x,y\in \fix f_H$ so that $g(x)\in\fix f_{H'}$.
Consider the linearisation $D_u$ of Floer's continuation equation at $u$;
its properties are similar to those discussed in Subsection~\ref{subsec:asymp_lin_floer}.
As $t\to +\infty$, $D_u$ is asymptotic to $L_{A_y(s)}$ because
for $t$ close to $+\infty$, $J_{s,t},H_{s,t}$ are equal to $J_s,H_s$.
On the other hand, as $t\to-\infty$, we can write down $D_u$
in the $g$-induced trivialisation $\Phi_x\circ dg$ of $u^*TE_f|_{u(-\infty,s)}$.
We claim that $D_u$ is asymptotic, as $t\to -\infty$, to $L_{A_x(s)}$.
Indeed, the asymptotic operator is determined
by the following data: the fixed point $g(x)$, the chosen trivialisation $\Phi_x\circ dg$,
and $J_{s,t},H_{s,t}$ which equal $g^*J_s,H_s\circ g$ for $t$ close to $-\infty$.
We see all of this data is pulled by $g$ from the data $x$, $\Phi_x$, $J_s$, $H_s$
which defines the asymptotic linearised operator $A_x(s)$. Clearly, pullback by $g$
does not change the linearised operator at all, so $D_u$ is asymptotic to $L_{A_x(s)}$ as $t\to-\infty$.

The outcome is that the set $\{L_{A_x(s)}\}_{x\in\fix f_H}$ of asymptotic operators
to $D_u$ for $u\in \M(x,y;J_s,H_s)$ (these are solutions to Floer's equations for the differential on $CF^*(f)$, without the second symplectomorphism $g$  involved)
is identical to the set of asymptotic operators
to $D_u$ for $u\in\M(g(x),y;J_{s,t},H_{s,t})$ (these are solutions to Floer's continuation equation),
provided we use the described trivialisations.

Consequently, the usual definition of coherent orientations \cite{FloHo93} 
 on $\M(x,y;J_s,H_s)$
can be applied without any change to orient $\M(g(x),y;J_{s,t},H_{s,t})$,
$x,y\in\fix f_H$.
In Definition~\ref{def:action_hf}, we pick such a coherent orientation on
 $\M(g(x),y;J_{s,t},H_{s,t})$.
Instead of repeating the complete definition of coherent orientations, we only recall a piece relevant
to the signs appearing in Lemma~\ref{lem:trace_on_chains} regarding
the supertrace of $g\hf$.

Coherent orientations are not unique, but the sign any coherent orientation associates to a point 
$u\in \M^0(g(x),x;J_{s,t},H_{s,t})$, $x\in \fix f_H$,
is canonical. We explain its definition
following \cite{FloHo93} and \cite[Appendix A]{MDSaBook}.
As we have seen, $D_u$ is asymptotic as $t\to\pm \infty$ to the same operator
$$
L_{A(s)}=\bd/\bd t+J_0\bd/\bd s+A(s),
$$
where $A(s)=A_x(s)$ in notation of the previous paragraphs.
Choose a generic homotopy $L_\tau$ 
from $D_u$ to $L_{A(s)}$, $\tau\in[0,1]$, such that $L_\tau$ are Fredholm operators
which stay asymptotic to $L_{A(s)}$ as $t\to\pm\infty$.

\begin{definition}
\label{def:sign_sec_cyl}
For $u\in \M^0(g(x),x;J_{s,t},H_{s,t})$, $x\in \fix f_H$, define $\sgn(u)=(-1)^\epsilon$ where
$$
\epsilon=\sum_{\tau\in[0,1]}\dim_\R\ker L_\tau.
$$
\end{definition}

Because the operators $L_\tau$ have zero index, the sum is well-defined and does not depend modulo 2 on the chosen path.
Let us repeat that, as part of Definition~\ref{def:action_hf}, these signs appear in Lemma~\ref{lem:trace_on_chains}.

\subsection{Holomorphic sections over the torus}
\label{subsec:fib_torus}
Subsection~\ref{subsec:hol_sect} explained that solutions to Floer's equation are
holomorphic sections of a fibration $E_f\to S^1\times \R$,
whose monodromy around $S^1$ equals $f$.
Now, let $f,g$ be two commuting symplectomorphisms of $X$. In this subsection
we define a fibration $p\co E_{f,g}^{1,R}\to T^{1,R}$
over a 2-torus $T^{1,R}$.
The monodromies of this fibration equal $f$ and $g$ around the two basis loops of the torus.
After that we recall how to count its holomorphic sections, see \cite{MDSaBook} for details. 
We start by defining the torus
$$
T^{1,R}\coloneqq \frac{[0,1]\times [-R,R]}{(s,\{-R\})\sim (s,\{R\}),\ (\{0\},t)\sim (\{1\},t)}
$$
and equipping  $T^{1,R}$ with the complex structure $j^{1,R}$
which comes from the standard one on $[0,1]\times \sqrt{-1}[-R,R]\subset \C$.
Define
$$
\label{eq:E_f_g}
E_{f,g}^{1,R}\coloneqq \frac{X\times [0,1]\times [-R,R]}{(x,s,\{-R\})\sim (g(x),s,\{R\}),\ (x,\{0\},t)\sim (f(x),\{1\},t)}
$$
Here $x\in X$, $s\in[0,1]$, $t\in [-R,R]$.
Because $fg=gf$, there is a fibration  $p\co E_{f,g}^{1,R}\to T^{1,R}$ and a fibrewise symplectic
closed 2-form $\omega_{E_{f,g}^{1,R}}$ coming from the one on $X$.


Fix a generic almost complex structure $\tilde J$ on $E_{f,g}^{1,R}$ 
such that $\tilde J$ is $\omega_{f,g}^{1,R}$-tame on the fibres 
and the projection $p\co E_{f,g}^{1,R}\to T^{1,R}$ is
$(\tilde J, j^{1,R})$-holomorphic.
Let $\M(j^{1,R},\tilde J)$ be the space of all $(j^{1,R},\tilde J)$-holomorphic sections
$u\co T^{1,R}\to E_{f,g}^{1,R}$:
\begin{equation}
\label{eq:hol_sect}
du+\tilde J(u)\circ du\circ j^{1,R}=0. 
\end{equation}

 For generic $\tilde J$, this moduli space is a smooth manifold that breaks into components of different dimensions. This manifold has a canonical orientation, and
in particular its 0-dimensional part
$\M^0(j^{1,R},\tilde J)$ consists of signed points. We will now describe how these signs are defined.
Let $u\in \M^0(j^{1,R},\tilde J)$. Consider the linearised equation~(\ref{eq:hol_sect}) at $u$,
\begin{equation*}
D_u\co  C^\infty(u^*T^vE_{f,g}^{1,R})\to \Omega^{0,1}(u^*T^vE^{1,R}_{f,g}).
\end{equation*}

Here $T^vE^{1,R}_{f,g}=\ker dp$
and $u^*T^vE_{f,g}^{1,R}$ is a complex bundle over the torus $T^{1,R}$.
Because $u$ has index 0, this bundle has Chern number 0 and hence is trivial;
fix its trivialisation.
Together with the holomorphic co-ordinates $(s,t)$ on $T^{1,R}$, it
induces a trivialisation of
$\Omega^{0,1}(u^*TE^{1,R}_{f,g})=\R^{2n}$.
In this trivialisation, $D_u$ is a 0-order perturbation of the Cauchy-Riemann operator:
\begin{equation}
 \label{eq:linear_floer_torus}
D_u=\bd/\bd t +J_0\bd/\bd s+A(s,t)\co C^\infty(T^{1,R},\R^{2n}) \to C^{\infty} (T^{1,R},\R^{2n})
\end{equation}
where $A(s,t)\co T^{1,R}\to Hom(\R^{2n},\R^{2n})$.
This is the same operator as considered in Subsection~\ref{subsec:index_prob_torus},
except that now $A(s,t)$ can depend on $t$ as well as on $s$.
The operator $D_u$ is always Fredholm of index 0.

Fix, once and for all, an injective  operator of the above form, for example
$$L_\id=\bd/\bd t +J_0\bd/\bd s+\id.$$
(This one is injective by Lemma~\ref{lem:ker_asymp_oper_torus}, because $\ker (\id-e^{-J_0})=0$.)
Find a smooth homotopy of operators $L_\tau$, $\tau\in[0,1]$, from $D_u$ to $L_\id$, by deforming the 0-order part $A(s,t)$ to $\id$.

\begin{definition}[{cf.~\cite[p.~51 and Appendix A]{MDSaBook}}]
\label{def:sign_sec_torus}
For $u\in \M^0(j^{1,R},\tilde J)$, define $\sgn(u)\coloneqq (-1)^\epsilon$ where
$$
\epsilon =\sum_{\tau\in[0,1]}\dim_\R\ker L_\tau.
$$
\end{definition}

For $u\in \M^0(j^{1,R},\tilde J)$, denote $\omega(u)\coloneqq \int_{T^{1,R}}u^*\omega^{1,R}_{E_{f,g}}$. The following is well known.

\begin{proposition}
\label{prop:count_sec_torus_invar}
$$
\sharp  \M^0(j^{1,R},\tilde J)\coloneqq \ \sum_{\mathclap{{u\in \M^0(j^{1,R},\tilde J)}}}\ \  \sgn(u)\cdot q^{\omega(u)}
$$
is independent of the complex structure $j^{1,R}$ on the torus and of generic $\tilde J$.\qed
\end{proposition}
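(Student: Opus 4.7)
The proof is the standard cobordism argument for Gromov-Witten type invariants, adapted to sections of the fibration $E_{f,g}^{1,R}\to T^{1,R}$. First I would check the sum is a well-defined element of $\Lambda$: for every $E\in\R$, only finitely many $u\in \M^0(j^{1,R},\tilde J)$ satisfy $\omega(u)\le E$. This follows from Gromov compactness applied to sections in bounded relative homotopy classes, where the $W^+$ condition rules out any limiting configuration carrying a non-constant sphere bubble of positive area and non-positive $c_1$; all other potential bubble configurations appear in codimension at least two for generic $\tilde J$.

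Given two generic data sets $(j_0,\tilde J_0)$ and $(j_1,\tilde J_1)$, choose a generic smooth homotopy $(j_\tau,\tilde J_\tau)$, $\tau\in[0;1]$, keeping $p$ holomorphic with respect to $(\tilde J_\tau,j_\tau)$ and keeping $\tilde J_\tau$ fiberwise tame. The parametric moduli space
$$
\M^{\mathrm{par}}=\bigl\{(u,\tau):\tau\in[0;1],\ u\in \M^0(j_\tau,\tilde J_\tau)\bigr\}
$$
is, for a generic such path, a smooth $1$-manifold whose boundary is $\M^0(j_1,\tilde J_1)\sqcup(-\M^0(j_0,\tilde J_0))$. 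Its compactness in every energy window follows again from Gromov compactness: the domain $T^{1,R}$ is closed, so no cylindrical breaking arises, and sphere bubbling is confined to codimension $\ge 2$ in $\M^{\mathrm{par}}$ by the same $W^+$ input as before, now applied to index $-1$ rather than index $0$ section configurations.

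The sign conventions of Definition~\ref{def:sign_sec_torus} are compatible with the boundary orientation of $\M^{\mathrm{par}}$. Indeed, the determinant line bundle of the family of linearised operators $D_{(u,\tau)}$ is trivialised by a continuous deformation of each $D_{(u,\tau)}$ to the common reference operator $L_\id$; the induced orientation of $\det D_{(u,\tau)}$ at $\tau=0,1$ agrees with Definition~\ref{def:sign_sec_torus}, while the mod $2$ spectral flow along $\M^{\mathrm{par}}$ keeps the trivialisation coherent. Summing $\sgn(u)\cdot q^{\omega(u)}$ along each compact connected component of $\M^{\mathrm{par}}$ therefore gives $\sharp \M^0(j_1,\tilde J_1)=\sharp\M^0(j_0,\tilde J_0)$.

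The main obstacle is the compactness step, specifically ruling out bubbling in the $1$-parameter family: the $W^+$ condition is what makes this go through classically without virtual perturbation machinery, and it plays exactly the role it plays in the construction of $HF(f)$ in Subsection~\ref{subsec:flo_hom}. Everything else is formal bookkeeping with determinant lines and Novikov power series.
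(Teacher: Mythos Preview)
Your argument is correct and is exactly the standard cobordism argument one would give; the paper itself provides no proof, marking the proposition as well-known (with a bare \qed) and deferring to \cite{MDSaBook} for details. Your sketch fills in precisely what the reference would supply, including the role of the $W^+$ condition in excluding sphere bubbling in the one-parameter family.
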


\subsection{Gluing the fibration over the cylinder to the fibration over the torus} 
Given a symplectomorphism $f\co X\to X$, we have constructed
 a fibration
$p\co E_f\to S^1\times \R$ (\ref{eq:E_f}); also,
given two commuting symplectomorphisms $f,g\co X\to X$ and a parameter $R\in \R$,  we have 
constructed a fibration $E_{f,g}^{1,R}\to T^{1,R}$.
The fibres of both fibrations are symplectomorphic to $X$. Now, there is a  map  
\begin{equation}
\label{eq:glue_fib}
E_f\supset p^{-1}(S^1\times[-R,R])\to E_{f,g}^{1,R}
\end{equation}
It glues the boundary component $p^{-1}(S^1\times\{R\})$
to the other boundary component $p^{-1}(S^1\times\{-R\})$
via the symplectomorphism $g\co X\to X$ applied fibrewise along $S^1$.

Fix regular $J_s,H_s$ (\ref{eq:h_j_period_cyl}).
As in (\ref{eq:j_h_pull_g}), set 
$$J_s'=g^*J_s,\quad H_s'=H_s\circ g.$$
Choose a homotopy
$J_{s,t}$, $H_{s,t}$ (\ref{eq:cont_htopy_support}) between $J_s',H_s'$ and $J_s,H_s$.
This homotopy must be $t$-independent for  large and  small $t$;
we assume for convenience 
$$
J_{s,t}\equiv J_s',\ H_{s,t}\equiv H_s'\ \mathrm{for}\ t\le-R,
\quad \mathrm{and}\quad  J_{s,t}\equiv J_s,\ H_{s,t}\equiv H_s\ \mathrm{for}\ t\ge R.
$$
Finally, let 
$\tilde J\coloneqq \tilde J(J_{s,t},H_{s,t})$ 
be the almost complex structure
on $E_f$
from Subsection~\ref{subsec:hol_sect}, which has the property that solutions to Floer's continuation  equation are
exactly $(j_{S^1\times\R},\tilde J)$-holomorphic sections $S^1\times \R\to E_f$.


By definition, $\tilde J|_{p^{-1}(S^1\times\{R\})}$
is the $g$-pullback of $\tilde J|_{p^{-1}(S^1\times\{-R\})}$,
which agrees with the gluing (\ref{eq:glue_fib}).
So $\tilde J$
defines a glued almost complex structure $\gl \tilde J$
on $E_{f,g}^{1,R}$. 
Let us recall our notation one more time.
$\M(x,y;J_{s,t},H_{s,t})$ consists of holomorphic sections over $S^1\times \R$ which are
 solutions to Floer's continuation equation (\ref{eq:Floer_cont}), and
 $\M(j^{1,R},\gl \tilde J(J_{s,t},H_{s,t}))$ consists of holomorphic sections over the torus $T^{1,R}$.
We come to an important proposition, of which everything but formula (\ref{eq:sign_compare}) is well known.

\begin{proposition}
\label{prop_gluing}
For each $A>0$ there is $R>0$ such that there is a bijection called the gluing map and denoted by $\gl$:
$$
\gl\co \bigsqcup_{x\in \fix f_H}
\M^0(g(x),x;J_{s,t},H_{s,t})^{<A}
\ 
\xrightarrow{1-1}
\ 
 \M^0(j^{1,R},\gl \tilde J(J_{s,t},H_{s,t}))^{< A}.
$$
Here the superscripts $*\,^{< A}$ mean we are taking only those solutions whose $\omega$-area is less than $A$. The gluing map
preserves  $\omega$-areas: 
$$
\int_{S^1\times \R} u^*\omega_{E_f} = \int_{T^{1,R}}\gl(u)^*\omega_{E_{f,g}^{1,R}}
$$
and changes the signs from Definitions~\ref{def:sign_sec_cyl},~\ref{def:sign_sec_torus}
by $(-1)^{\deg x}$:
\begin{equation}
\label{eq:sign_compare} 
\sgn(u)=\sgn(\gl(u))\cdot(-1)^{\deg x}
\end{equation}
Here $u\in \M^0(g(x),x;J_{s,t},H_{s,t})^{< A}$,
and $\deg x$ is defined in Definition~\ref{def:floer_grading}.
\end{proposition}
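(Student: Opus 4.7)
\emph{Plan of proof.} The bijection and area statements are standard Floer/SFT gluing. Given $u\in \M^0(g(x),x;J_{s,t},H_{s,t})$, I truncate $u$ to $S^1\times[-R,R]$ and use the identification (\ref{eq:glue_fib}) to obtain a preglued section of $E_{f,g}^{1,R}$; since $u$ is in a transverse $0$-dimensional moduli space, $D_u$ is surjective and Newton iteration yields for all large $R$ a unique honest holomorphic section $\gl(u)$. Surjectivity of $\gl$ follows from SFT compactness as $R\to\infty$; injectivity is uniqueness in Newton iteration. The $\omega$-areas coincide exactly for large $R$ since $u$ and $\gl(u)$ agree outside a compact region whose $\omega$-contribution tends to $0$ while both sections lie in a discrete energy subset.

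The sign formula~(\ref{eq:sign_compare}) is the substantive content. By Subsection~\ref{subsec:coherent_orient_cont}, in the trivialisations $\Phi_x\circ dg$ at $-\infty$ and $\Phi_x$ at $+\infty$ both asymptotes of $D_u$ coincide with $L_{A_x(s)}$. Let $\gl L$ denote the torus operator obtained by truncating and regluing a cylindrical $L$ via (\ref{eq:glue_fib}); then $\gl L_{A_x(s)}=L_{A_x(s)}$ on $T^{1,R}$ viewed as $t$-independent. I decompose the torus path from $D_{\gl u}$ to $L_\id$ defining $\sgn(\gl u)$ into three segments: (a) $D_{\gl u}\rightsquigarrow \gl D_u$ through invertibles for large $R$, contributing $+1$; (b) $\gl D_u\rightsquigarrow L_{A_x(s)}$ on $T^{1,R}$, obtained by gluing any cylindrical path from $D_u$ to $L_{A_x(s)}$; (c) $L_{A_x(s)}\rightsquigarrow L_\id$ on $T^{1,R}$.

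Segment (c) contributes $\sgn(L_{A_x(s)},L_\id)$, which Lemma~\ref{lem:sign_count_torus_compute} evaluates to $\sgn\det(\id-df_H(x))\cdot \sgn\det(\id-e^{J_0})=(-1)^{\deg x}\cdot(+1)$, using Definition~\ref{def:floer_grading} and the computation $\det(\id-e^{J_0})=(2-2\cos 1)^n>0$. The key step is to show that segment (b) contributes exactly $\sgn(u)$, i.e., that the mod-$2$ count of kernel crossings of the glued torus path equals the cylindrical count from Definition~\ref{def:sign_sec_cyl}. I would prove this by identifying both counts with the change of orientation of the respective determinant line bundle along the path, and then invoking the canonical identification of cylindrical and torus determinant lines provided by the gluing construction (for which the comparison of the two reference operators $L_{A_x(s)}$ on cylinder and torus contributes no additional sign, since the cylinder reference itself lies at the endpoint of the path and has trivial cylindrical $\sgn$). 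Combining (a), (b), (c) then yields $\sgn(\gl u)=(+1)\cdot\sgn(u)\cdot(-1)^{\deg x}$, equivalent to~(\ref{eq:sign_compare}).

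The main obstacle is the sign-matching for segment (b), which is precisely the technical content the author singles out as not directly extractable from the existing references on coherent orientations. Rigorous verification requires a careful compatibility of the gluing isomorphism between cylindrical and torus determinant lines with the orientation conventions of Definitions~\ref{def:sign_sec_cyl} and~\ref{def:sign_sec_torus}. I would reduce it by homotopy invariance of mod-$2$ spectral flow to a model neighbourhood of the reference operator $L_{A_x(s)}$ and check the sign behaviour there directly, exploiting the fact that the only asymmetry between the cylindrical and torus determinant lines is precisely the Conley--Zehnder parity $\sgn\det(\id-df_H(x))=(-1)^{\deg x}$, which has already been accounted for in segment (c) --- so the remaining segment (b) must be sign-preserving, as required.
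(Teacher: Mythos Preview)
Your proposal is correct and follows essentially the same route as the paper. Both arguments reduce the sign comparison to the concatenation of two torus homotopies: one from $D_{\gl u}$ to the $t$-independent operator $L_{A_x(s)}$ (your segment~(b), the paper's homotopy $L_\tau^{\gl}$), whose kernel parity equals that of the cylindrical homotopy $D_u\rightsquigarrow L_{A_x(s)}$ by the gluing compatibility of determinant lines; and one from $L_{A_x(s)}$ to $L_{\id}$ (your segment~(c)), which contributes $(-1)^{\deg x}$ via Lemma~\ref{lem:sign_count_torus_compute}. Your explicit verification that $\sgn\det(\id-e^{J_0})=(2-2\cos 1)^n>0$ is a small point the paper leaves implicit, and your segment~(a) separating $D_{\gl u}$ from $\gl D_u$ is a harmless refinement; otherwise the structure and the key step (your segment~(b), the paper's equation~(\ref{eq:signs_proof_1})) coincide.
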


\begin{proof}
The existence of the bijection $\gl$ is well known.
The  map $\gl$ is constructed for the case $f=g=\id$  in \cite{SchwThesis}, see also \cite{BeRa95}, and that proof carries over to arbitrary $f,g$. Alternatively, one can adopt general SFT gluing and compactness theorems \cite{CompSFT03}.

Let $u(s,t)\in \M(g(x),x;J_{s,t},H_{s,t})$. By a smooth homotopy this section can be made $t$-independent for $t$ close to $-\infty$ and $+\infty$. We can glue it into a smooth section over $T^{1,R}$ by applying (\ref{eq:glue_fib}). The smooth section over $T^{1,R}$ we obtained is smoothly homotopic to $\gl(u)$ and hence has the same $\omega$-area as $\gl(u)$: so gluing preserves $\omega$-areas.

Let us explain why $\gl$ changes the sign by $(-1)^{\deg x}$. 
We have illustrated our argument by an informal diagram below; its arrows correspond to homotopies between Fredholm operators, and  its labels are the signs determined by the mod 2 count of the dimensions of kernels appearing during the homotopies.
$$
\xymatrix{
\genfrac{}{}{0pt}{}{over}{S^1\times \R}&
 D_u \ar [rr]_-{}^-{\sgn(u)} 
& \ar@{}[d]^(.10){}="a"^(.70){}="b"  \ar@{=>}"a"; "b"_{gluing}& \bd/\bd t+J_0\bd/\bd s+A(s) & \\
\genfrac{}{}{0pt}{}{over}{T^{1,R}}&  D_{\gl (u)} \ar [rr]_-{}^-{\sgn(u)} 
\ar@<-2pt> `d[rrrr] `[rrrr]_-{\sgn(\gl(u))} [rrrr] 
&& \bd/\bd t+J_0\bd/\bd s+A(s) \ar [rr]_-{}^-{(-1)^{\deg x}} && \bd/\bd t+J_0\bd/\bd s+\id
}
$$
Take $u(s,t)\in \M^0(g(x),x;J_{s,t},H_{s,t})$ and consider  linearised Floer's operators
(\ref{eq:linear_floer}) and (\ref{eq:linear_floer_torus}):
\begin{eqnarray*}
D_u\co  H^{1,p}(S^1\times \R,\R^{2n}) \to L^p(S^1\times \R,\R^{2n}),\\
D_{\gl (u)}\co  C^\infty(T^{1,R},\R^{2n})\to C^\infty(T^{1,R},\R^{2n}).
\end{eqnarray*}
Take a  homotopy $L_\tau$ from $D_u$ to the operator (\ref{eq:L_x_cyl}) $L_{A(s)}=\bd/\bd t+J_0\bd/\bd s+A(s)$.
By Definition~\ref{def:sign_sec_cyl},
\begin{equation}
\label{eq:signs_proof_0}
\sgn(u)=(-1)^{\sum_{\tau} \dim \ker L_\tau}
\end{equation}

Let $L_\tau^\gl$ be a homotopy from $D_{gl(u)}$ to the analogous operator $L_{A(s)}=\bd/\bd t+J_0\bd/\bd s+A(s)$ 
over the torus, considered in Subsection~\ref{subsec:index_prob_torus}. It is well known that 
\begin{equation}
\label{eq:signs_proof_1}
\sum_{\tau\in[0,1]} \dim \ker L_\tau \equiv \sum_{\tau\in[0,1]} \ker L_\tau^\gl \mod 2.
\end{equation}
(This is a special case of the fact that orientations of  moduli spaces of pseudo-holomorphic sections before gluing canonically define orientations on moduli spaces after gluing.)

Take a homotopy $L_{A_\tau(s)}$ from $L_{A(s)}$ to $L_{\id}=\bd/\bd t+J_0\bd/\bd s+\id$.
To compute the kernels swept by this homotopy, we will use Lemma~\ref{lem:sign_count_torus_compute}.
First, let $\Psi(s)$ be the matrix which solves (\ref{eq:psi}) with respect to our given $A(s)$, then $\Psi(1)=df_H(X)$ by Lemma~\ref{lem:asymp_oper_cyl_solutions}. By Definition~\ref{def:floer_grading}, $\sgn \det(\id-\Psi(1))=\deg x$. Second, let $\Psi(s)$ instead be the matrix which solves (\ref{eq:psi}) with respect to the $s$-independent matrix $A(s)\equiv \id$;
the solution is $e^{-sJ_0}$, and for it we obtain $\sgn \det(\id-\Psi(1))=+1$.
Now by Lemma~\ref{lem:sign_count_torus_compute},
\begin{equation}
\label{eq:signs_proof_2}
\sum_{\tau\in[0,1]} \dim \ker L_{A_\tau(s)} \equiv \deg x \mod 2.
\end{equation}
The concatenation of homotopies $L_{A_\tau(s)}$ and $L_\tau^\gl$ is a homotopy from
$D_{\gl(u)}$ to $\bd/\bd t+J_0\bd/\bd s+\id$. So by Definition~\ref{def:sign_sec_torus},
\begin{equation}
\label{eq:signs_proof_3}
\sgn(\gl(u))=(-1)^{\sum_\tau \dim \ker  L_{A_\tau(s)}}\cdot (-1)^{\sum_\tau \dim \ker L_\tau^\gl}. 
\end{equation}
Combining (\ref{eq:signs_proof_0}), (\ref{eq:signs_proof_1}), (\ref{eq:signs_proof_2}), (\ref{eq:signs_proof_3})
we get
$
\sgn(u)=\sgn(\gl(u))\cdot (-1)^{\deg x}
$
which completes the proof.
 \end{proof}

\subsection{Proof of the elliptic relation}
\begin{proof}[Proof of Theorem~\ref{th:ell_rel}]
We only need to compile the previous statements.
It suffices to prove that for each $A>0$, the supertraces are equal up to order $q^A$:
$STr(f\hf)/q^A=STr(g\hf)/q^A$.
By Lemma~\ref{lem:trace_on_chains} and Proposition~\ref{prop_gluing}, for sufficiently large $R$ we have
$$
STr(g\hf)/q^A=\ \sum_{
\mathclap{\begin{smallmatrix}
 x\in \fix f_H,\\
  u\in \M^0(g(x),x;J_{s,t},H_{s,t})^{< A}
\end{smallmatrix}}
}\ \ 
(-1)^{\deg x}\cdot\sgn(u)\cdot q^{\omega(u)}
=\\
\sharp \M^0(j^{1,R},\tilde J(J_{s,t},H_{s,t}))^{< A}.
$$
One can repeat all constructions after swapping $f$ and $g$ to get
$$
STr(f\hf\co  HF^*(g)\to HF^*(g))=
\sharp \M^0(j^{R,1},\tilde J_1)^{< A}.
$$
Here $j^{R,1}=j^{1,\frac 1 R}$ is another complex structure on the torus (which is ``long'' in the $s$-direction, while $j^{1,R}$ is ``long'' in the $t$-direction), and $\tilde J_1$ some other almost complex structure on the total space.
Now Theorem~\ref{th:ell_rel} follows from Proposition~\ref{prop:count_sec_torus_invar}.
\end{proof}

\subsection{Finite order symplectomorphisms}
We will now prove two lemmas about the action on Floer cohomology when one of the two commuting symplectomorphisms has finite order, and derive
Proposition~\ref{prop:hf_bound_fixp_notrans}.
The proof of the next lemma is an extension of \cite[Lemma 7.1]{HoSa95}.

\begin{lemma}
\label{lem:action_hf_homology_fixloc}
Let $X$ be a symplectic manifold satisfying the $W^+$ condition. Let $g,\phi\co X\to X$ be two commuting symplectomorphisms. Suppose 
$\phi^k=\id$ and
the fixed point set $X^\phi$ is a smooth manifold (maybe disconnected, with components of different dimensions).
Then
$$
 STr(g\hf\co  HF^*(\phi)\to HF^*(\phi))=L(g|_{X^\phi})\cdot q^0+\sum_{i}a_i\cdot q^{\omega_i}, \quad \omega_i>0.
$$ 
\end{lemma}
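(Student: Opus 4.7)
\emph{Overall plan.} I would compute $STr(g\hf)$ at the chain level using $\phi$-invariant Floer data so that the $q^0$ coefficient is visibly a Lefschetz number on $X\pp$. By averaging over $\langle\phi\rangle$ I pick a $\phi$-invariant $\omega$-tame almost complex structure $J$; this makes $\phi$ holomorphic, so for each connected component $\Sigma\subset X\pp$ the normal bundle $N\Sigma$ is a complex subbundle and $d\phi|_{N\Sigma}$ is complex unitary with no $+1$ eigenvalue. Next I pick a $\phi$-invariant Morse function $f$ on $X\pp$ and extend it to a small $\phi$-invariant $s$-independent Hamiltonian $H:X\to\R$; the $\phi$-periodicity~(\ref{eq:h_j_period_cyl}) is then automatic. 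For $H$ sufficiently small, $\phi$-invariance forces $X_H$ to be tangent to $X\pp$, and away from $X\pp$ the map $\phi$ is bounded away from $\id$ while $\psi_1$ is close to it, so $\fix f_H$ is exactly the set of critical points of $f$, all non-degenerate.

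\emph{Matching the grading.} The plan then is to show $\deg x\equiv\mathrm{ind}(x)\pmod 2$ for each critical point $x\in\Sigma$. The splitting $T_xX=T_x\Sigma\oplus N_x\Sigma$ is preserved by $df_H(x)$, giving
\[
\det\bigl(\id-df_H(x)\bigr)=\det\bigl(\id-d\psi_1^{-1}|_{T_x\Sigma}\bigr)\cdot\det_\R\bigl(\id-d\phi|_{N_x\Sigma}\bigr).
\]
A direct small-$H$ expansion yields $\sgn\det(\id-d\psi_1^{-1}|_{T_x\Sigma})=(-1)^{\mathrm{ind}(x)}$, and the second factor equals $|\det_\C(\id-d\phi|_{N_x\Sigma})|^2>0$ because $d\phi|_{N_x\Sigma}$ is unitary without $+1$ eigenvalue. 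So no component-dependent shift appears in the Floer grading.

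\emph{Identifying the $q^0$ term.} Set $J'=g^*J$, $H'=H\circ g$ (still $\phi$-invariant) and choose any $\phi$-invariant interpolating homotopy $(J_{s,t},H_{s,t})$. By Lemma~\ref{lem:trace_on_chains},
\[
STr(g\hf)=\sum_{x\in\fix f_H}\sum_{u\in\M^0(g(x),x;J_{s,t},H_{s,t})}(-1)^{\deg x}\sgn(u)\,q^{\omega(u)}.
\]
The plan is to identify zero-area sections ($\omega(u)=0$) with gradient flowlines of $f$ on $X\pp$ from $g(x)$ to $x$ in a sign-preserving bijection, and to verify that all other sections have strictly positive $\omega$-area. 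The resulting Morse chain map $g^M:CM^*(f)\to CM^*(f)$, graded by Morse index, has supertrace equal to the classical Lefschetz number $L(g|_{X\pp})=\sum_{\Sigma\subset X\pp}L(g|_\Sigma)$, which gives the $q^0$ coefficient in the claim.

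\emph{Main obstacle.} The hard step is the $q^0$ claim: that small-area continuation sections (for small $H$ and a suitable small $\phi$-invariant continuation homotopy) localize on $X\pp$ and are in sign-preserving bijection with Morse flowlines of $f$. This is an adiabatic localization analysis of the type underlying the PSS isomorphism and the Floer--Morse comparison of Remark~\ref{rem:morse_cont}; essentially the same package of techniques supports the alternative approach to Proposition~\ref{prop:hf_bound_fixp_notrans} flagged in Remark~\ref{rem:alt_proof_bound_intro} via \cite[Lemma~14.11]{SeiBook08}.
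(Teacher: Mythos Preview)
Your overall strategy coincides with the paper's: use $\phi$-invariant, $s$-independent Floer data so that generators are Morse critical points on $X^\phi$, identify the zero-energy continuation solutions with Morse continuation flowlines inside $X^\phi$, and read off the Lefschetz number as the $q^0$ coefficient. Your explicit grading computation (matching $\deg x$ with the Morse index via the normal determinant being a complex norm) is a useful addition that the paper leaves implicit when it invokes Remark~\ref{rem:morse_cont}.

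Where your proposal is thin is exactly the step you flag as the ``main obstacle,'' and the paper's resolution differs from what you suggest. It is not really a PSS-type adiabatic limit; the paper instead uses the $s$-independent-solution mechanism of Hofer--Salamon. With $s$-independent $\phi$-invariant data, an $s$-independent solution $u(s,t)\equiv x(t)$ satisfies the periodicity $\phi(x(t))=x(t)$ and hence lies in $X^\phi$, where it becomes a gradient flowline of $H_t|_{X^\phi}$. Two inputs from \cite{HoSa95} are then cited: such flowlines are regular precisely when the gradient flow of $H_t$ is Morse--Smale near $X^\phi$, and there is an $\epsilon>0$ such that every solution with $\omega(u)<\epsilon$ is $s$-independent. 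The paper ensures the Morse--Smale condition by taking $H=H_0\circ p+\dist^2$ on a tubular neighbourhood (your ``small extension'' would need an argument here), so that stable manifolds are normal disk bundles over those of $H_0$. Finally---and this is a point your proposal omits---the $\phi$-invariant data need not be regular for the non-$s$-independent solutions, so one perturbs to $s$-dependent $(J_{s,t},H_{s,t})$; the already regular $s$-independent solutions persist with the same signs, and the $\epsilon$-threshold guarantees no spurious low-energy solutions appear. Your reference to Remark~\ref{rem:alt_proof_bound_intro} and \cite[Lemma~14.11]{SeiBook08} points to the dual computation ($STr(\phi\hf)$ on $HF(g)$) rather than the one at hand.
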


In other words, $STr(g\hf)\in \Lambda$ contains only summands with non-negative powers of $q$, and the $q^0$-coefficient is the topological Lefschetz number of $g|_{X^\phi}$.
Using the elliptic relation we will later show that the higher order terms $a_iq^{\omega_i}$ actually vanish; this is however a separate argument and we first  prove the lemma as stated. 

\begin{proof}
First we construct a Hamiltonian function on $X$ of special form. Let $U(X^\phi)$ be a $\phi$-equivariant tubular neighbourhood of $X^\phi$,  $p\co U(X^\phi)\to X^\phi$ the projection and $\dist$ a $\phi$-invariant function on $U(X^\phi)$ measuring the distance to $X^\phi$ in some $\phi$-invariant metric. Let $H_0$ be an arbitrary function on $X^\phi$.
Define 
\begin{equation*}
 H\coloneqq H_0\circ p+\dist^2.
\end{equation*}
This is a function on $U(X^\phi)$. Extend this function to  $X$ in any way and then average it with respect to $\phi$ (this will not change the function on $U(X^\phi)$). We denote the result by $H$ again.
Note that $H|_{X^\phi}=H_0$ and $Crit(H_0)=Crit(H)\cap X^\phi$.
For the rest of the proof, $H$ will be a generic function constructed this way; in particular $H|_{X^\phi}$ is also generic.

Because $\phi$ has finite order, we can choose a $\phi$-invariant  compatible almost complex structure $J$ on $X$ which preserves $TX^\phi$, and such that $J|_{X^\phi}$ is arbitrary.
Since $J,H$ are $\phi$-invariant, they satisfy (\ref{eq:h_j_period_cyl}),
 with $f=\phi$.
Thus Floer's equation (\ref{eq:Floer_dif}) makes sense for such $s$-independent data $J,H$.
Denote $J'\equiv g^* J$, $H'\equiv H\circ g$ as in (\ref{eq:j_h_pull_g}). 

Choose an $s$-independent homotopy (\ref{eq:cont_htopy_support}) $H_t,J_t$ from $H'$ to $H$ (resp.~from $J'$ to $J$). For every $t$,  $H_t,J_t$ must be $\phi$-invariant, and as earlier 
\begin{equation}
\label{eq:h_t_quadratic}
H_t=(H_0)_t\circ p+\dist^2
\end{equation}
on $U(X^\phi)$ where $(H_0)_t=(H_t)|_{X^\phi}$ can be arbitrary.
Note that
in general, it might not be possible to find $s$-independent $J_t,H_t$  that would make all solutions
of Floer's continuation equation  (\ref{eq:Floer_cont}) regular.
However, using \cite{HoSa95} we will now argue that some solutions  of (\ref{eq:Floer_cont}) (namely gradient flowlines of $H_t$) are still generically regular.

Recall that $J_t$ defines the time-dependent metric $\omega(\cdot,J_t\,\cdot)$ on $X$ by
definition of a compatible almost complex structure. If $H$ is a function on $X$, its gradient and Hamiltonian vector fields are related by: $\nabla H=JX_H$. So $s$-independent solutions $u(s,t)\equiv x(t)$ of Floer's continuation equation (\ref{eq:Floer_cont}) are exactly 
$\omega(\cdot,J_t\, \cdot)$-gradient flowlines of $H_t$:
$$
dx(t)/dt - \nabla H_t=0.
$$
The $\phi$-periodicity condition (\ref{eq:u_period_cyl}) now reads $\phi(x(t))=x(t)$
so we are looking only at gradient flowlines inside $X^\phi$.
Note that every $s$-independent solution $u(s,t)\equiv x(t)$ of (\ref{eq:Floer_cont})  has zero area:
$\omega(u)=0$.
 Recall that solutions of (\ref{eq:Floer_cont}) are elements of $\M(x,y;J_t,H_t)$
where $x\in \fix \phi_{H'}$ and $y\in \fix \phi_H$.
Also note that $\fix \phi_H=Crit(H|_{X^\phi})$, and similarly $\fix \phi_{H'}=Crit(H'|_{X^\phi})$.

The following two facts are proved in \cite{HoSa95}  when $H_t$, $J_t$ are $t$-independent and $\phi=\id$ (that paper is interested in the equations for  Floer's differential rather than continuation maps). The proofs are valid in the general case. For example, one can track that the periodicity condition~(\ref{eq:h_j_period_cyl}), which is the only place where $\phi$ explicitly appears, is not used in the proof of the facts below.

\begin{enumerate}
\item For any $J_t,H_t$ as above, an $s$-independent solution $u(s,t)\equiv x(t)$ of (\ref{eq:Floer_cont}) is regular, i.e.~$D_u$ (\ref{eq:linear_floer}) is onto, if and only if the $\omega(\cdot,J_t\, \cdot)$-gradient flow of $H_t$ is Morse-Smale near $X^\phi$
\cite[Corollary~4.3, Theorem~7.3]{SaZe92}, compare \cite[proof of Theorem~6.1]{HoSa95}.

\item There is $\epsilon>0$ such that every solution $u(s,t)$ of (\ref{eq:Floer_cont})
with $\omega(u)<\epsilon$ is $s$-independent \cite[Lemma 7.1]{HoSa95}.
\end{enumerate}

We claim that the gradient flow of a generic $H_t$ constructed above is  Morse-Smale near $X^\phi$.
Indeed, we can choose $H_t|_{X^\phi}$ freely, so we can make the flow of $H_t|_{X^\phi}$ 
 Morse-Smale.
 Because $H_t$ is quadratic in the normal direction to $X^\phi$ (\ref{eq:h_t_quadratic}), the stable manifolds of $H_t$ are, near $X^\phi$, normal disk bundles over those of $H_t|_{X^\phi}$, and the unstable manifolds of $H_t$ lie in $X^\phi$ and coincide with those of $H_t|_{X^\phi}$. Consequently, $H_t$ is Morse-Smale near $X^\phi$ if and only if $H_t|_{X^\phi}$ is Morse-Smale.

By Remark~\ref{rem:morse_cont} or  \cite[4.2.2]{SchwBook},
\begin{equation}
\label{eq:count_zero_area_flowlines}
\sum_{
\mathclap{\begin{smallmatrix}
 x\in \fix \phi_H,\\
 u\in \M^0(g(x),x;J_{s},H_{s})\co \omega(u)\le 0
\end{smallmatrix}}
}\ \ \ 
(-1)^{\deg x}\cdot \sgn(u) \cdot q^{\omega(u)}=L(g|_{X^\phi})\cdot q^0.
\end{equation}

Although the left hand side looks exactly like the expression for $STr(g\hf)$ from Lemma~\ref{lem:trace_on_chains},
$J_{t},H_{t}$ need not be regular for all continuation equation solutions,
while $g\hf$ must be computed using a regular Hamiltonian and almost complex structure. 
To cure this, we slightly perturb $J,H$ and $J_t,H_t$ by allowing them to depend on $s$, to get $J_s,H_s$ and $J_{s,t},H_{s,t}$.
For a generic such perturbation, all solutions to (\ref{eq:Floer_cont}) with respect to $J_{s,t},H_{s,t}$ become regular.
Because $s$-independent solutions in $\M(x,y;J_t,H_t)$ were already regular, 
they are in 1-1 correspondence (via the continuation map) with some solutions in $\M(x,y;J_{s,t},H_{s,t})$ of zero $\omega$-area.
By item (2) above, every $u\in \M(x,y;J_{s,t},H_{s,t})$ with $\omega(u)<\epsilon$ actually has zero area and corresponds to an $s$-independent solution in $\M(x,y;J_t,H_t)$.
(See \cite[proof of Proposition~7.4]{HoSa95} for this argument.)
In view of (\ref{eq:count_zero_area_flowlines}) this means
$$
\sum_{
\mathclap{\begin{smallmatrix}
 x\in \fix \phi_H,\\
 u\in \M^0(g(x),x;J_{s,t},H_{s,t})\co \omega(u)\le 0
\end{smallmatrix}}
}\ \ \ 
(-1)^{\deg x}\cdot \sgn(u)\cdot q^{\omega(u)}=L(g|_{X^\phi})\cdot q^0.
$$
Lemma~\ref{lem:action_hf_homology_fixloc} follows from this equality and
Lemma~\ref{lem:trace_on_chains}.
\end{proof}

\begin{lemma}
\label{lem:str_le_dim_hf}
Let $X$ be a symplectic manifold satisfying the $W^+$ condition. Let $g,\phi\co X\to X$ be two commuting symplectomorphisms. Suppose 
$\phi^k=\id.$
Then
$$
 STr(\phi\hf\co  HF^*(g)\to HF^*(g))=a\cdot q^0,\quad \text{where }a\in \C \text{ and } |a|\le \dim_\Lambda HF^*(\phi).
$$ 
\end{lemma}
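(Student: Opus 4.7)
The plan is to establish two things: first, that $STr(\phi\hf)$ lies in $\C\cdot q^0$, exploiting the finite order of $\phi\hf$ acting on $HF(g)$; and second, to bound its modulus by $\dim_\Lambda HF(\phi)$ by transferring the computation via Theorem~\ref{th:ell_rel} onto $HF(\phi)$, where a chain-level argument applies.

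For the first step, iterate Lemma~\ref{lem:action_powers} to get $(\phi\hf)^k = (\phi^k)\hf = \id\hf$. By definition $\id\hf$ is the identity pushforward composed with the continuation map from $(J_s,H_s)$ to itself, which, taken along the constant homotopy, is chain-homotopic to the identity; hence $\id\hf = \id_{HF(g)}$. Thus $\phi\hf$ is a $\Lambda$-linear automorphism satisfying $T^k = \id$, and its eigenvalues in $\overline{\Lambda}$ are roots of $X^k-1 \in \C[X]$, which lie in $\C\subset\Lambda$ and have complex modulus $1$. The $\Z_2$-graded signed sum of such eigenvalues lies in $\C\cdot q^0$, giving $STr(\phi\hf)=a\cdot q^0$ for some $a\in\C$.

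For the bound, apply Theorem~\ref{th:ell_rel}:
\begin{equation*}
a\cdot q^0 \;=\; STr(\phi\hf : HF(g)\to HF(g)) \;=\; STr(g\hf : HF(\phi)\to HF(\phi)),
\end{equation*}
and compute the right-hand side at the chain level. As in Lemma~\ref{lem:action_hf_homology_fixloc}, pick $\phi$-invariant Floer data for $HF(\phi)$ with $H$ of the form $H_0\circ p + \dist^2$ near $X^\phi$, so that generators of $CF(\phi)$ correspond to critical points of $H_0$ on $X^\phi$. For generic data the $q^0$-coefficient of each diagonal entry $(g\hf)_{xx}$ comes only from zero-area (constant) continuation solutions, each contributing $\pm 1$ and present only when $g(x)=x$. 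Hence the $q^0$-part of the chain-level super trace is bounded in absolute value by the number of generators, and by the Hopf trace formula it equals $a$. Choosing the auxiliary Morse function $H_0$ on $X^\phi$ to realise a minimal chain model (so that $\dim_\Lambda CF(\phi) = \dim_\Lambda HF(\phi)$) refines this to $|a|\le\dim_\Lambda HF(\phi)$.

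The main obstacle is the minimisation step: in general $\dim_\Lambda CF(\phi)$ exceeds $\dim_\Lambda HF(\phi)$, and one must either invoke a perfect Morse function on $X^\phi$ or an algebraic minimal-model perturbation. What saves the bound on the $q^0$-coefficient through this reduction is that $g\hf$ preserves the Novikov filtration — which follows from the non-negativity of $\omega$-areas for continuation solutions — so that a minimal chain representative still has diagonal $q^0$-entries of modulus at most $1$.
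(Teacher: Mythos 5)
Your first step is exactly the paper's proof: by Lemma~\ref{lem:action_powers}, $(\phi\hf)^k=(\phi^k)\hf=\id\hf=\id$ on $HF(g)$, so $\phi\hf$ is diagonalisable with eigenvalues among the $k$-th roots of unity in $\C\cdot q^0$, and the signed sum of these eigenvalues is $a\cdot q^0$ with $a\in\C$. At this point you are already done, because the same eigenvalue count gives $|a|\le\dim_\Lambda HF(g)$ --- and that is the bound actually intended. The occurrence of $HF(\phi)$ in the statement is a typo for $HF(g)$: compare the introduction, where the same claim is stated as ``$STr(\phi\hf:HF(f)\to HF(f))$ equals $a\cdot q^0$ where $|a|\le\dim_\Lambda HF(f)$'', and note that the application in Proposition~\ref{prop:hf_bound_fixp_notrans} requires precisely $|L(f|_{X^\phi})|=|a|\le\dim_\Lambda HF(f)$, i.e.\ the bound by the dimension of the space on which $\phi\hf$ acts. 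The paper's one-line proof establishes only that version.

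Your second half, which tries to prove the literal bound by $\dim_\Lambda HF(\phi)$ by passing through Theorem~\ref{th:ell_rel} and a chain-level computation of $STr(g\hf)$ on $CF(\phi)$, has genuine gaps. First, the zero-area contributions to the diagonal entry at a generator $x$ are rigid continuation solutions in $\M^0(g(x),x;J_{s,t},H_{s,t})$, i.e.\ (after the reduction of Lemma~\ref{lem:action_hf_homology_fixloc}) rigid gradient flowlines from $g(x)$ to $x$ inside $X^\phi$; these need not require $g(x)=x$ and need not be unique, so the diagonal $q^0$-entries are not bounded by $1$ in modulus. At best the chain-level argument bounds $|a|$ by something like $\dim_\Lambda CF(\phi)$, which depends on the auxiliary Morse function. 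Second, the proposed repair --- passing to a minimal model so that $\dim CF=\dim HF$ --- is not justified: a homotopy transfer to a minimal complex conjugates the chain map by transfer maps that can introduce arbitrary constants into the $q^0$-parts of the diagonal entries, and $X^\phi$ need not admit a perfect Morse function. Since $g$ has no finiteness assumption, there is no a priori control on the eigenvalues of $g\hf$ on $HF(\phi)$ either, so I see no route to the $\dim_\Lambda HF(\phi)$ bound along these lines; the correct fix is to replace $HF(\phi)$ by $HF(g)$ in the statement and stop after your first paragraph.
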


\begin{proof}
By Lemma~\ref{lem:action_powers} $(\phi\hf)^k=\id$, so all eigenvalues of $\phi\hf$ 
are among the roots of unity $\sqrt[k]{1}\cdot q^0\in\Lambda$. 
The signed sum of these eigenvalues gives $STr(\phi\hf)$, and Lemma~\ref{lem:str_le_dim_hf} follows.
\end{proof}

The elliptic relation (Theorem~\ref{th:ell_rel}) and Lemma~\ref{lem:str_le_dim_hf}
imply the following corollary.

\begin{corollary}
\label{cor:str_high_order_vanish}
The terms $a_i\cdot q^{\omega_i}$, $\omega_i>0$ from Lemma~\ref{lem:action_hf_homology_fixloc} actually vanish.\qed
\end{corollary}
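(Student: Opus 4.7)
The plan is to combine the two preceding lemmas with the elliptic relation in the most direct way possible. Applying Theorem~\ref{th:ell_rel} to the commuting pair $(f,g) = (\phi, g)$ yields the scalar identity
$$
STr(g\hf: HF(\phi) \to HF(\phi)) \;=\; STr(\phi\hf: HF(g) \to HF(g))
$$
in $\Lambda$. Thus the left-hand side, which by Lemma~\ref{lem:action_hf_homology_fixloc} equals $L(g|_{X^\phi}) \cdot q^0 + \sum_i a_i \cdot q^{\omega_i}$ with $\omega_i > 0$, must coincide with the right-hand side, which by Lemma~\ref{lem:str_le_dim_hf} has the form $a \cdot q^0$ for some $a \in \C$.

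Since distinct monomials $q^{\omega}$ are $\C$-linearly independent in $\Lambda$, comparing coefficients forces $a_i = 0$ for all $i$ (and incidentally recovers $a = L(g|_{X^\phi})$). This is all that is required. No new analytic input is needed beyond the results already proved; the substantive content has been absorbed into Theorem~\ref{th:ell_rel}, which pins down the full $\Lambda$-valued supertrace on both sides rather than just its leading term. There is no real obstacle in this argument, as the corollary is a purely algebraic consequence of matching the two expressions for the same element of $\Lambda$.
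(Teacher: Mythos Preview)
Your proof is correct and is exactly the argument the paper intends: the corollary is stated with a \qed\ and the preceding sentence says it follows from Theorem~\ref{th:ell_rel} and Lemma~\ref{lem:str_le_dim_hf}, which is precisely the comparison you wrote out.
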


\begin{proof}[Proof of Proposition~\ref{prop:hf_bound_fixp_notrans}]
The proposition follows from Lemma~\ref{lem:action_hf_homology_fixloc},
Lemma~\ref{lem:str_le_dim_hf} and Theorem~\ref{th:ell_rel}.
\end{proof}

\begin{remark}
 \label{rem:alt_proof_bound_main}
As promised in Remark~\ref{rem:alt_proof_bound_intro} we sketch an alternative proof of Proposition~\ref{prop:hf_bound_fixp_notrans} which does not appeal to Theorem~\ref{th:ell_rel}.
Suppose for simplicity a symplectomorphism $f\co X\to X$ commutes with a symplectic involution $\iota$ and
$f$ has non-degenerate isolated fixed points. Note that, for general reasons, $d\iota$ acts by $-\id$ on the normal bundle to its fixed locus $X^\ii$. For computing $HF^*(f)$, choose the zero Hamiltonian perturbation and an almost complex structure which is $\ii$-invariant at points $ x\in \fix f\cap X^\ii$.
Then $\ii\hf$ only counts constant solutions $u(s,t)\equiv x\in \fix f\cap X^\ii$. (Because $f$ has isolated fixed points, the only zero-area solutions are constant, and because $\iota\hf^2=\id$, all positive area solutions cancel.)
However, the sign associated to a constant solution $u$ is not always positive. The reason is that we must write the linearised Floer's operator $D_u$ in 
a trivialisation of $u^*T_xX=S^1\times \R\times T_xX$ which differs by $d\iota(x)$ over the two ends of the cylinder,
according to the definition in Subsection~\ref{subsec:coherent_orient_cont}.
Consider the splitting $T_xX=T_xX^\ii\oplus N_xX^\ii$ into the $+1$ and $-1$ eigenspaces of $d\ii(x)$.
We can choose the constant trivialisation of $u^*T_xX^\ii$ 
and get the $\R$-independent operator on this subspace, which by definition carries the positive sign.
However, we are not allowed to choose the constant trivialisation of $u^*N_xX^\ii$ (instead, an allowed choice is, for example, a rotation from $\id$ to $-\id$ with parameter $t$), so $D_u$ will not be the canonical $\R$-invariant operator on $N_xX$ and can carry a nontrivial sign from Definition~\ref{def:sign_sec_cyl}.
We claim that this sign equals $\sgn\det (\id-df(x)|_{N_xX^\ii})$.
The computation can be essentially be reduced to the index problem considered in Subsection~\ref{subsec:index_prob_torus}, since $D_u$ can still be chosen independent of one variable; a related Lagrangian version
of this statement is \cite[Lemma~14.11]{SeiBook08}.
Once the signs are known, it is easy to see that $STr(\ii\hf)=L(f|_{\fix \ii})\cdot q^0$:
\begin{align*}
STr(\ii\hf)&=\sum_{x\in \fix f\cap X^\ii}(-1)^{\deg x}\cdot \sgn \det(\id-df(x)|_{N_xX^\ii})\cdot q^0
\\
&=\sum_{x\in \fix f\cap X^\ii} \sgn \det(\id-df(x)|_{T_xX^\ii})\cdot q^0\ \ = \ \ L(f|_{\fix \ii})\cdot q^0.
\end{align*}
The bound $\dim HF^*(f)\ge L(f|_{\fix \ii})$ follows as in Lemma~\ref{lem:str_le_dim_hf}. 
\end{remark}

\subsection{Lagrangian elliptic relation}
\label{subsec:ell_rel_lag} 
In this subsection, we briefly explain Theorem~\ref{th:ell_rel_Lag} and Proposition~\ref{prop:hf_bound_lag}.
Let $X$ be a monotone  symplectic manifold, i.e.~$[\omega(X)]=\lambda c_1(X)$
as elements of $H^2(X;\R)$, $\lambda > 0$.
Let $\phi\co X\to X$ be a symplectomorphism, and $L_i\subset X$ be two connected monotone Lagrangian submanifolds  such that $\phi(L_i)=L_i$.

In order to define the action $\phi\hf\co HF^*(L_1,L_2)\to HF^*(L_1,L_2)$ over a field of characteristic not equal to two, we must fix the following additional data.
First, $L_i$ must be oriented, although $\phi$ need not preserve
the orientations.
(In Appendix~\ref{app:growth_ring} we use the orientation-reversing case.)
Second, the hypothesis below must be satisfied.

\begin{hypothesis}
\label{hyp:equiv_spin}
$L_i$ must be equipped with spin structures $S_i$
together with isomorphisms $\phi^*S_i\to S_i$ if $\phi|_{L_i}$ preserves orientation,
and $\phi^*S_i\to \bar S_i$ if $\phi|_{L_i}$ reverses orientation.
Here $\bar S_i$ is the following spin structure on $\bar L_i$ (that is, on $L_i$ with the opposite orientation). 
The original spin structure $S_i$ is a trivialisation of $TL_i$ over the 1-skeleton of $L_i$ which extends over the 2-skeleton and agrees with the orientation on $L_i$. By definition, $\bar S_i$ is the composition of the trivialisation $S_i$
with a fixed orientation-reversing isomorphism $\R^n\to \R^n$, for example the one which multiplies the first co-ordinate by 
 $-1$. We note the desired isomorphisms $\phi^*S_i\to S_i$ or $\phi^*S_i\to \bar S_i$ always exist if $L_i$ are simply-connected.
\end{hypothesis}

In \cite[Section~14]{SeiBook08}, similar data (defined only for an involution $\phi$, with an extra condition on the ``squares'' of the above isomorphisms, but also allowing non-orientable Lagrangians) was called an equivariant Pin structure.

Pick some $J_s,H_s$ defining  Floer cohomology $HF^*(L_1,L_2;J_s,H_s,S_i)$, see \cite{Oh93,FO3Book} for a definition in the monotone setting.
We have included the choice of spin structures in our notation.
The action $\phi\hf$ is the composition
$HF^*(L_1,L_2;J_s,H_s,S_i)\to HF^*(L_1,L_2;\phi^*J_s,H_s\circ \phi,\phi^*S_i)\to HF^*(L_1,L_2;J_s,H_s,S_i)$.
Here the first map is the tautological chain-level map that takes all chain generators and Floer's solutions to their $\phi$-image; we are using that $\phi L_i=L_i$. 
The second one is the continuation map. We skip the proof of the next lemma.

\begin{lemma}[{cf.~\cite[Sections (14a)~and (14e)]{SeiBook08}}]
\label{lem:lag_ell_rel_square}
If $\phi^k=\id$ then $(\phi\hf)^k=\pm\id$.\qed
\end{lemma}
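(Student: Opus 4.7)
The plan is to follow the blueprint of Lemma~\ref{lem:action_powers} and then compute the residual sign contributed by the spin structures. First I would establish $(\phi\hf)^k = (\phi^k)\hf$ on $HF(L_1,L_2)$ by the same rearrangement used in the proof of Lemma~\ref{lem:action_powers}: the $k$ tautological chain maps can be pushed past the intervening continuation maps (they intertwine with continuation in the same tautological way as in the symplectomorphism case), so they compose into a single tautological map induced by $\phi^k$, while the $k$ continuation maps concatenate, via the standard Lagrangian gluing of homotopies, into a single continuation map from the $k$-fold pulled-back data back to $(J_s,H_s)$. Specialising to $\phi^k=\id$, the pulled-back data is literally the original data, so the concatenated continuation map is continuation along a loop of data and is the identity on homology, while the tautological component acts on generators $L_1\cap L_2$ as the identity.

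The only remaining content is the $k$-fold composition $\sigma_i$ of the chosen spin structure isomorphism $\phi^*S_i \to S_i$ (resp.\ $\phi^*S_i \to \bar S_i$ if $\phi|_{L_i}$ reverses orientation). Note that when $\phi|_{L_i}$ reverses orientation, $k$ is forced to be even, since $\phi^k=\id$ preserves orientation, so $\sigma_i$ is an honest automorphism of $S_i$ as a $\mathrm{Spin}(n)$-bundle. For a connected Lagrangian $L_i$ the automorphism group of a spin structure is $\{\pm 1\}$, generated by the deck transformation of the double cover of the frame bundle; write $\epsilon_i \in \{\pm 1\}$ for the sign of $\sigma_i$. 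A local sign analysis in the style of \cite[Sections~(14a) and~(14e)]{SeiBook08} shows that twisting the spin structure on $L_i$ by this deck transformation multiplies the coherent orientation of every Floer strip with boundary on $L_i$ by $\epsilon_i$. Since every Floer strip meets both Lagrangians, the net effect is global multiplication by $\epsilon_1\epsilon_2 \in \{\pm 1\}$, yielding $(\phi\hf)^k = \pm\id$.

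The main obstacle is the sign bookkeeping in the last step: one must fix conventions for how the spin structure on each Lagrangian enters Floer orientations (via trivialisations of the determinant of the Cauchy--Riemann index bundle over half-disks) and then verify that a deck-transformation twist of $S_i$ propagates to a single overall sign $\epsilon_i$ per Floer trajectory. This is precisely the computation Seidel carries out for involutions in \cite[Sections~(14a) and~(14e)]{SeiBook08}, and the extension from $k=2$ to higher $k$ is essentially formal once those conventions are fixed; the gluing/continuation step in the first paragraph is standard and presents no new difficulties in the monotone setting.
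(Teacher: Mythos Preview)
The paper does not actually prove this lemma: it is stated with a terminal \qed\ and the text immediately before it reads ``We skip the proof of the next lemma,'' with only a pointer to \cite[Sections~(14a) and~(14e)]{SeiBook08}. So there is no detailed argument in the paper to compare against; the student's proposal is effectively supplying what the paper omits.

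Your sketch is sound and is the natural elaboration of the cited material. The decomposition into (i) a Lagrangian analogue of Lemma~\ref{lem:action_powers} collapsing $(\phi\hf)^k$ into $(\phi^k)\hf$ up to the spin data, and (ii) the residual automorphism of each $S_i$ being $\pm 1$, is exactly the right structure. Two small points worth tightening: first, in the orientation-reversing case you should spell out why the $k$-fold composite of the maps $\phi^*S_i\to\bar S_i$ really lands back as an automorphism of $S_i$ (you note $k$ is even, but the intermediate steps involve alternating between $S_i$ and $\bar S_i$ and pulling back by various powers of $\phi$; the bookkeeping is straightforward but should be made explicit). Second, the claim that the loop of auxiliary data gives the identity continuation map is correct in this setting because Floer continuation maps depend only on endpoints up to chain homotopy, but it is worth saying this explicitly rather than leaving it implicit. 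With those clarifications your argument would constitute a complete proof at the level of detail the paper works at elsewhere.
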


Note that, unlike Lemma~\ref{lem:action_powers} and \cite[top of p.~310]{SeiBook08}, we do not necessarily get $(\phi\hf)^k=\id$, but having $(\phi\hf)^k=\pm\id$ is enough for future applications.

Choose $J_s,H_s$ (\ref{eq:h_j_period_cyl}) to define Floer's complex $CF^*(\phi;J_s,H_s)$. 
Take the fibration  $p\co E_\phi\to S^1\times[0,+\infty)$
with monodromy $\phi$ around the circle as in (\ref{eq:E_f}), but now over the semi-infinite cylinder
$S^1\times [0,+\infty)$ instead of $S^1\times\R$.
It contains the ``boundary condition'' manifold $S^1\times L\subset p^{-1}(S^1\times \{0\})$.
The symplectic form on $X$ defines a fibrewise symplectic form $\omega_{E_\phi}$ on $E_\phi$.
Choose a tame almost complex structure $\tilde J$ on $E_\phi$ which, over
$S^1\times [1,+\infty)$, equals $\tilde J(J_s,H_s)$ for some $J_s,H_s$ (see Subsection~\ref{subsec:hol_sect}),
and in particular is independent of $t\in[1,+\infty)$. 

Take  $x\in \fix\phi_H$ (\ref{eq:f_H}), that is, a generator of $CF^*(\phi;J_s,H_s)$.
We define $\M^0(L,x)$ to be the set of all zero index
$\tilde J$-holomorphic sections $u(s,t)\co S^1\times [0,+\infty)\to E_\phi$ which are
asymptotic, as $t\to +\infty$, to the Hamiltonian trajectory $\psi_s(x)$ (\ref{eq:hs_flow}),
and satisfy the Lagrangian boundary condition $u(s,0)\in S^1\times L$.
Then we define 
$$
[L]^\phi=\sum_{x\in \fix \phi_H}\sum_{u\in\M^0(L,x)}  \pm q^{\omega(u)}\cdot [x]\in HF^*(\phi).
$$
Here $[x]\in HF^*(\phi)$ is the cohomology class of the chain generator $x$, and $\omega(u)=\int_{S^1\times [0,+\infty)} u^*\omega_{E_\phi}$.
The signs are defined using the chosen spin structures on $L_i$ and coherent orientations for $\phi$.
This is a version of the open-closed string map, cf.~\cite{RiSmi12}.

Next we review the quantum  product $HF^*(\phi)\otimes HF^*(\phi^{-1})\to HF^*(\id)\cong QH^*(X)$.
It counts holomorphic sections of a symplectic fibration over $S^2$ with three punctures and monodromies $\phi,\phi^{-1},\id$ around them.
The first two punctures serve as inputs for $HF^*(\phi)$, $HF^*(\phi^{-1})$, and the third puncture is the output, see \cite{MDSaBook} for details.
If one caps the output puncture by a disk,
the count of sections over the resulting twice-punctured sphere (see the lower part of Figure~\ref{fig:lag_ell_rel}(a)), gives the
composition $HF^*(\phi)\otimes HF^*(\phi^{-1})\to HF^*(\id)\stackrel{\chi}\to \Lambda$
of the product and the integration map $\chi$ (once we identify $HF^*(\id)$ with $QH^*(X)$).

\begin{figure}[h]
\includegraphics{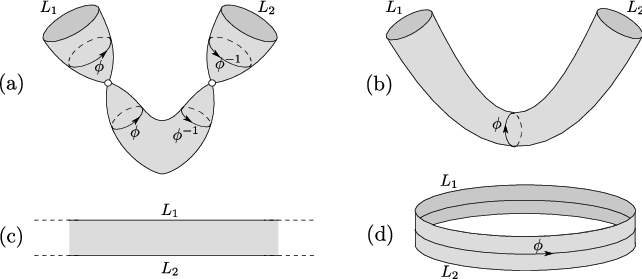}
\caption{Proving the Lagrangian elliptic relation.}
\label{fig:lag_ell_rel}
\end{figure}

Combining the definitions,
$\chi([L_1]^\phi *[L_2]^{\phi^{-1}})$
counts holomorphic sections over two cylinders and a twice-punctured sphere which
have the same asymptotics over the punctures in two pairs, see Figure~\ref{fig:lag_ell_rel}(a).
Here the cylinder $S^1\times [0,+\infty)$ is seen as a once punctured disk.
This count equals the number of sections of a glued fibration over an annulus, with monodromy $\phi$ around the core circle, and Lagrangian conditions  $S^1\times L_1, S^1\times L_2$ over the boundary of the annulus. The annulus carries a fixed ``long'' complex structure, see Figure~\ref{fig:lag_ell_rel}(b).

On the other hand, $STr(\phi\hf)$ counts sections of a trivial fibration over the strip $[0,1]\times \R$
with Lagrangian boundary conditions $\R\times L_i$ and asymptotics differing by $\phi$
over $t\to \pm\infty$, see Figure~\ref{fig:lag_ell_rel}(c).
We can glue the fibration over the strip twisting it by $\phi$ to get a fibration over the annulus 
which we have already encountered: it carries Lagrangian conditions $S^1\times L_i$ over the boundary and has monodromy $\phi$ around the core circle, see Figure~\ref{fig:lag_ell_rel}(d).
By gluing, $STr(\phi\hf)$ is equal to the count of holomorphic sections of this fibration, with a fixed (``long'', but in the other direction than before) complex structure on the annulus.
As the count of sections does not depend on the complex structure on the annulus, we get Theorem~\ref{th:ell_rel_Lag}.
We omit the discussion of signs which was carried out in detail for the case of commuting symplectomorphisms. The signs in present case can be studied by similar arguments if we superficially deform the Lagrangians so that $T_pL_1=T_pL_2$ for all intersection points $p\in L_1\cap L_2$, keeping these points isolated,
and then pick non-degenerate Hamiltonians $H_1,H_2$ to compute $HF^*(L_1,L_2)$.

Let us now explain Proposition~\ref{prop:hf_bound_lag}.
The most important step is to prove a Lagrangian analogue of Lemma~\ref{lem:action_hf_homology_fixloc}:
if $\phi$ is a map of finite order with fixed locus $X^\phi$ and smooth orientable Lagrangian fixed loci $L_i^\phi\subset X^\phi$ then
\begin{equation}
\label{eq:inv_act_on_inv_lag}
\chi([L_1]^\phi *[L_2]^\phi)=([L_1^\phi]\cdot[L_2^\phi])\cdot q^0+\sum_i a_i\cdot q^{\omega_i},\quad \omega_i>0. 
\end{equation}

Recall that $[L_1^\phi]\cdot[L_2^\phi]\in\Z$ is the homological intersection of the fixed loci $L_1^\phi,L_2^\phi$
inside $X^\phi$. (Note that $L_i^\phi$ are automatically isotropic but not necessarily Lagrangian, although we will only use the case when they are Lagrangian. One can get examples of $(\phi\hf)^k=-\id$ in Lemma~\ref{lem:lag_ell_rel_square} when dimensions of $L_1^\phi$, $L_2^\phi$ are different.)

In order to count sections of the configuration on 
Figure~\ref{fig:lag_ell_rel}(a),
we must specify the data $J_{s,t},H_{s,t}$
over our configuration consisting
of two half-cylinders $S^1\times[0,+\infty)$
and a twice-punctured sphere which we will now see as the cylinder $S^1\times \R$.
Similarly to Lemma~\ref{lem:action_hf_homology_fixloc}, we choose the data to be of special form,
namely independent of the basepoint: $J_{s,t}\equiv J$, $H_{s,t}\equiv H$ (this forces $J,H$ to be $\phi$-equivariant).
With this data, $s$-independent ($s\in S^1$) sections become gradient flowlines of the Morse function  $H$ inside the fixed locus $X^\phi$.
Rigid sections over $S^1\times \R$ are constant, while rigid sections over $S^1\times [0,+\infty)$
are flowlines from $L_i$ to a critical point of $H$.
This way, the count of $s$-independent rigid configurations
on 
Figure~\ref{fig:lag_ell_rel}(a)
is $\sum_{x\in \textit{Crit}^{\,n}(H|_{X^\phi})} ([L_1^\phi]\cdot [\textit{Stab}(x)])\, ([L_2^\phi]\cdot [\textit{Stab}(x)])$
where $\textit{Crit}^{\,n}$ are index $n$ critical points, $n=\half\dim_\R X^\phi$, and $\textit{Stab}$ are stable manifolds in $X^\phi$.
This sum equals the intersection $[L_1^\phi]\cdot[L_2^\phi]$.

Finally, one must argue that these configurations of flowlines are regular,
and are the only zero area solutions.
(There could be other positive area solutions which are not necessarily regular).
This is a variation on the lemmas cited in the proof of Lemma~\ref{lem:action_hf_homology_fixloc}.
Then one makes the data $J,H$ regular by allowing them to depend on $s,t$
and argues that the count of zero area solutions (which were already regular) is preserved.

On the other hand, if $\phi$ is of finite order then $\phi\hf\co  HF^*(L_1,L_2)\to HF^*(L_1,L_2)$ is of finite order by Lemma~\ref{lem:lag_ell_rel_square}, and the eigenvalues of $\phi\hf$ are among $\sqrt[2k]{1}\cdot q^0$. Consequently, $STr(\phi\hf)=a\cdot q^0$, $|a|\le \dim_\Lambda HF^*(L_1,L_2)$.
Now Theorem~\ref{th:ell_rel_Lag} and formula (\ref{eq:inv_act_on_inv_lag}) imply Proposition~\ref{prop:hf_bound_lag}.

\end{section}



\begin{section}{Vanishing spheres and Dehn twists}
\label{sec:van_spheres_intro}
Let $Y$ be a K\a hler manifold
with a K\a hler  form $\omega$, and $\cL\to Y$ a very ample holomorphic line bundle.
Let $X\subset Y$ be a smooth divisor in the linear system $|\cL|$. 
In this section we define 
$|\cL|$-vanishing Lagrangian spheres in the symplectic manifold $(X,\omega|_X)$.
They exist if the line bundle $\cL\to Y$ has zero defect (see below) and are then unique up to symplectomorphism.
Throughout this section, we denote by $D\subset \C$ the unit complex disk.

\subsection{Lefschetz fibrations and vanishing cycles}
\label{subsec:lef_fib_van_Cyc}
This subsection reviews well known material, see e.g.~\cite{SeiBook08}.

\begin{definition}[Lefschetz fibration with a unique singularity]
\label{def:lef_fib}
Suppose $E$ is a smooth manifold, $\Omega$ a closed 2-form on $E$, and $\pi\co  E\to D$ is a smooth proper map. The triple $(E,\Omega,\pi)$ is called
a Lefschetz fibration with a unique singularity if there is a point $p\in E$ (without loss of generality, we assume $\pi(p)=0\in D$), and a neighbourhood $U(p)$ such that:
\begin{itemize}
 \item $\pi$ is regular outside $U(p)$, and the restriction of $\Omega$ on the regular fibres of $\pi$ is symplectic;
\item there exists a complex structure on $U(p)$  with a holomorphic chart $x_1,\ldots,x_n$ such that
$$\pi(x_1,\ldots,x_n)=x_1^2+\ldots+x_n^2;$$
\item $\Omega|_{U(p)}$ is K\"ahler with respect to the above complex structure.
\end{itemize}
\end{definition}

All smooth fibres $E_t\coloneqq \pi^{-1}(t)$ contain a Lagrangian sphere, uniquely defined up to Hamiltonian isotopy. Let us sketch its construction, as we will refer to it later in the proof of Lemma~\ref{lem:A2chain_from_fib}.
Because the smooth fibres $E_t$ are symplectomorphic to each other by parallel transport with respect to the $\Omega$-induced connection on $E$, it suffices to construct a Lagrangian sphere in $E_t$ for a small $t\in \R_+$.
Define $L\subset U(p)\subset E$ by the equation
$$
x_1^2+\ldots+x_n^2=t,\quad x_i\in \R.
$$
Clearly $L\subset E_t$ and is a Lagrangian sphere for $t\in \R_+$
with respect to the standard symplectic structure $\Omega_{std}$ on $U(p)\subset \C^n$.
However, it is generally not possible
to make our form $\Omega|_{U(p)}$ standard by a holomorphic change of co-ordinates preserving $\pi$.
Instead, we can follow the argument of \cite[Lemma~1.6]{Sei03_LES}: there is a function $f$ on $U(p)$ such that
$\Omega|_{U(p)}=\Omega_{std}+dd^cf$. We can deform $f$ to $0$ in a smaller neighbourhood $U'(p)\subset U(p)$
while leaving $f$ unchanged outside of $U(p)$.
Let $f_r$ be such a homotopy and define $\Omega_r\coloneqq \Omega$ outside of $U(p)$, and
$\Omega_r|_{U(p)}\coloneqq \Omega_{std}+dd^cf_r$. 
Observe that $\Omega_0=\Omega$ and $\Omega_1|_{U'(p)}=\Omega_{std}$.
For all $r$, the smooth fibres $(E_t,\Omega_r|_{E_t})$
are symplectic and the cohomology class of $\Omega_r|_{E_t}$ is constant, so by Moser's lemma the smooth fibres are actually symplectomorphic
to each other for any $r$.
In particular, the Lagrangian sphere $L\subset (E_t,\Omega_{\mathrm{std}}|_{E_t})$ constructed above
can be mapped by this symplectomorphism to a Lagrangian sphere in
 $(E_t,\Omega|_{E_t})$.
 
\begin{definition}[Vanishing Lagrangian sphere]
\label{def:van_sph_lef_fib}
A Lagrangian sphere in a smooth fibre $E_t$ is called vanishing for the Lefschetz fibration $E\to D$
if it is Hamiltonian isotopic to the one constructed above. 
\end{definition}

\subsection{Defect of a line bundle}

\begin{definition}[Defect of a line bundle]
\label{def:defect_line_bun}
Let $Y$ be a complex manifold and  $\cL\to Y$ a very ample holomorphic line bundle, giving 
an embedding $Y\subset (\P^N)^*$ where $\P^N=\P H^0(Y,\cL)$.
The discriminant variety $\Delta\subset \P^N$ is the dual variety to $Y$,
parameterising all hyperplanes in $(\P^N)^*$ which are tangent to $Y\subset \P^N$.
Equivalently, it parameterises all singular divisors in the linear system $\P H^0(Y,\cL)$.
The defect of $\cL$ is the number $$\mathrm{def}\, \cL=N-1-\dim \Delta\ge 0.$$
\end{definition}

Line bundles usually have zero defect; for us, it is useful to note the following.

\begin{lemma}[{\cite[page 532]{BeFaSo92}}]
\label{lem:pos_def_curve}
Suppose $\cL\to Y$ is a very ample line bundle.
If $\mathrm{def}\, \cL\ge 1$, there exists a smooth rational curve $l\subset Y$
such that $\cL\cdot l=1$.\qed
\end{lemma}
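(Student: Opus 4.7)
The plan is to deduce the statement from the classical theory of dual varieties and contact loci. The key input is the description of the contact locus of a general tangent hyperplane of $Y$, which I will recall below and then use to produce the desired line.

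First, I would set up the dual-variety picture. The very ample bundle $\cL$ embeds $Y \subset \P^N = \P H^0(Y,\cL)^*$, and $\Delta \subset (\P^N)^* $ is the locus of hyperplanes $H$ that are tangent to $Y$ at some smooth point. For a general $H \in \Delta$, define the contact locus
\[
F_H = \overline{\{y \in Y_{\mathrm{sm}} : T_y Y \subset H\}} \ \subset\ Y.
\]
By the reflexivity theorem applied to $Y$ and $\Delta$ (equivalently, the biduality $(\Delta)^{\vee} = Y$), the general fiber of the projection from the conormal variety $\mathrm{Con}(Y) \subset Y \times (\P^N)^*$ onto $\Delta$ is precisely $F_H$, and a standard incidence-dimension count gives
\[
\dim F_H \;=\; N - 1 - \dim \Delta \;=\; \mathrm{def}\,\cL.
\]

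The key ingredient is the theorem of Ein (\emph{Varieties with small dual varieties}, Invent.\ Math.\ 1986), refining results of Landman and Zak: for a \emph{general} tangent hyperplane $H \in \Delta$, the contact locus $F_H$ is in fact a linear subspace of $\P^N$ contained in $Y$, hence a projective space $\P^{\mathrm{def}\,\cL}$ sitting linearly in $Y \subset \P^N$. I would simply invoke this statement. (Its proof uses that the Gauss map of $Y$ has positive-dimensional fibers exactly along these contact loci, combined with a careful second fundamental form analysis showing the fibers are linear.)

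Assuming $\mathrm{def}\,\cL \ge 1$, pick any projective line $l \subset F_H \subset Y$; this is possible because $F_H \cong \P^{\mathrm{def}\,\cL}$ contains plenty of lines. Then $l$ is a smooth rational curve on $Y$, and since $\cL$ pulls back to $\cO_{\P^N}(1)$ under the embedding, $\cL \cdot l = \cO_{\P^N}(1) \cdot l = 1$, which is exactly the conclusion. The main obstacle is the linearity of $F_H$; everything else is a routine dimension count and extraction of a line. Since the paper only needs the statement as a black box and attributes it to Beltrametti--Fania--Sommese, it is entirely reasonable to cite Ein's theorem (or the same fact as repackaged in \cite{BeFaSo92}) rather than reprove it.
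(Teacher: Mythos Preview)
Your argument is correct and is essentially identical to the paper's own sketch: both invoke the classical fact that for a general tangent hyperplane $H$ the contact locus is a linearly embedded $\P^{\mathrm{def}\,\cL}\subset Y$ (the paper cites \cite[Theorem~1.18]{Tev05}, you cite Ein), then take any line $l$ in this contact locus and observe $\cL\cdot l=1$. The only cosmetic difference is that the paper phrases the last step as ``$l$ meets a generic smooth hyperplane section transversely in one point,'' which is equivalent to your direct computation $\cO_{\P^N}(1)\cdot l=1$.
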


For completeness, let us sketch a proof. 
Recall that points in $\Delta^{reg}$ correspond to generic
hyperplanes $H\subset (\P^N)^*$ which are not transverse to $Y$.
If $\mathrm{def}\, \cL\ge 1$,
for such a hyperplane $H\in \Delta^{reg}$
the contact locus $(H\cap Y)^{sing}$ is a linear $\P^{\mathrm{def}\, \cL}$ \cite[Theorem~1.18]{Tev05}.
Take any line $l\cong \P^1$ in $H$.
Obviously it intersects a generic smooth hyperplane section $\tilde H\cap Y$
transversely at a single point, which means $\cL\cdot l=1$.

\begin{corollary}
\label{cor:power_zero_def}
Suppose $\cL\to Y$ is a very ample line bundle.
For any $d\ge 2$, $\mathrm{def}\, \cL^{\otimes d}=0$.\qed
\end{corollary}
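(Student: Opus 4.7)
The plan is to argue by contradiction using Lemma~\ref{lem:pos_def_curve}. First I would observe that since $\cL$ is very ample, its tensor power $\cL^{\otimes d}$ is also very ample, so Lemma~\ref{lem:pos_def_curve} applies to $\cL^{\otimes d}$.

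Suppose, toward a contradiction, that $\mathrm{def}\, \cL^{\otimes d} \ge 1$. Then by Lemma~\ref{lem:pos_def_curve} there exists a smooth rational curve $l \subset Y$ such that
$$\cL^{\otimes d}\cdot l = 1.$$
The key numerical observation is the multiplicativity of the intersection pairing in the line bundle argument:
$$\cL^{\otimes d}\cdot l \;=\; d\,(\cL\cdot l).$$
Since $\cL$ is very ample, it is in particular ample, so $\cL\cdot l$ is a positive integer for the curve $l$. Thus $d\,(\cL\cdot l)$ is a positive integer divisible by $d\ge 2$, which cannot equal $1$. This contradiction forces $\mathrm{def}\, \cL^{\otimes d}=0$.

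There is really only one potential subtlety to double-check, namely the input to Lemma~\ref{lem:pos_def_curve}: the lemma requires the bundle in question to be very ample, and we are applying it to $\cL^{\otimes d}$ rather than to $\cL$ itself. Since very ampleness is preserved under tensor powers (a Segre-type embedding of the image of the $|\cL|$-embedding works, or one invokes the standard fact that if $\cL$ is very ample then $\cL^{\otimes d}$ is very ample for all $d\ge 1$), this is immediate and the whole argument reduces to the short divisibility observation above.
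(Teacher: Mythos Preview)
Your proof is correct and is precisely the argument the paper has in mind: the corollary is stated with an immediate \qed\ right after Lemma~\ref{lem:pos_def_curve}, and the intended deduction is exactly the divisibility contradiction $1=\cL^{\otimes d}\cdot l=d(\cL\cdot l)$ that you spell out.
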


\subsection{$|\cL|$-vanishing spheres in divisors}

Recall $D\subset\C$ denotes the unit disk.

\begin{definition}[Total space of a family of divisors]
\label{def:tot_fam_div}
Let $Y$ be a K\a hler manifold and $\cL\to Y$ a very ample line bundle.
Take a holomorphic embedding $u\co D\to \P H^0(Y,\cL)=|\cL|$, then
each point $t\in D$ defines a divisor $X_{u(t)}\subset Y$.
We call $\{X_{u(t)}\}_{t\in D}$ a family of divisors.
The total space of the family $\{X_{u(t)}\}_{t\in D}$ is 
$$E\coloneqq \{(x,u(t)):\  x\in X_t,\ t\in D\}\subset Y\times \P H^0(Y,\cL).$$
The restriction of the product K\a hler form from $Y\times \P H^0(Y,\cL)$ to $E$ makes $E$ a K\a hler manifold. 
There is a canonical projection $\pi\co E\to D$ whose fibres are $X_{u(t)}$.
In future we shall write $\{X_t\}_{t\in D}$ instead of $\{X_{u(t)}\}_{t\in D}$.
\end{definition}

\begin{definition}[$|\cL|$-vanishing Lagrangian sphere in a divisor]
\label{def:lin_vanish}
Let $Y$ be a K\a hler manifold and $\cL\to Y$ a very ample line bundle with zero defect, and 
with $\dim \P H^0(Y,\cL)\ge 2$.
Let $\Delta\subset \P H^0(Y,\cL)$ be the discriminant variety from Definition~\ref{def:defect_line_bun}.
Let $u\co D\to \P H^0(Y,\cL)$ be a holomorphic embedding
 such that $u(0)\in \Delta^{reg}$, $u(t)\notin \Delta$ for $t\neq 0$, and the intersection of $u(D)$ with $\Delta^{reg}$ is transverse. Let $\pi\co  E\to D$ be as in Definition~\ref{def:tot_fam_div}.

By \cite[1.8]{Lj92}, $\pi\co E\to D$ is a Lefschetz fibration with a unique singular point
over $t=0$ (in particular, $X_0$ has a single node). 
The vanishing sphere $L\subset X_1$ of this fibration is called an $|\cL|$-vanishing sphere.
\end{definition}

Obviously, every smooth divisor in the linear system $|\cL|$ contains  an $|\cL|$-vanishing sphere,
if $\cL$ has zero defect.
Two different maps $u,u'\co D \to H^0(Y,\cL)$ with  $u(1)=u'(1)$ can give two $|\cL|$-vanishing spheres in $X_1$
which are not Hamiltonian isotopic and even not homologous to each other, such as in the case of Lemma~\ref{lem:a2_chain_div}.
However,  $|\cL|$-vanishing spheres are unique up to symplectomorphism.

\begin{lemma}
 \label{lem:van_unique}
Let $\cL\to Y$ be a very ample line bundle over a K\a hler manifold $Y$, $\mathrm{def}\, \cL=0$. Suppose $X,X'$ are two smooth divisors in the linear system $|\cL|$ and $L\subset X$, $L'\subset X'$ are two $|\cL|$-vanishing Lagrangian spheres. Then there is a symplectomorphism $\psi\co X\to X'$ such that $\psi(L)=L'$.
\end{lemma}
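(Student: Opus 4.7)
\textbf{Proof plan for Lemma~\ref{lem:van_unique}.} The strategy is a two-step reduction: first connect $X$ to $X'$ through smooth divisors, then connect the two defining disks via a path inside the smooth locus of the discriminant, using parallel transport to produce the symplectomorphism.

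\emph{Step 1 (Reduction to $X=X'$).} Since $\Delta\subset|\cL|$ is a proper algebraic subvariety of a projective space, $|\cL|\setminus\Delta$ is path-connected; pick a smooth path $\beta\colon[0,1]\to|\cL|\setminus\Delta$ from $X$ to $X'$. The family $\{X_{\beta(s)}\}$ is a smooth K\"ahler fibration, and its symplectic connection (the $\omega$-orthogonal complement to the vertical tangent bundle) integrates to a symplectic parallel transport $\phi_\beta\colon X\to X'$. I claim $\phi_\beta(L)\subset X'$ is again $|\cL|$-vanishing. Indeed, let $u\colon D\to|\cL|$ be the disk defining $L$. Since ``holomorphic embedding $D\hookrightarrow|\cL|$ hitting $\Delta$ transversely in a single interior point'' is an open condition, and we are free to prescribe the endpoint $u(1)$ anywhere in $|\cL|\setminus\Delta$, we can build a smooth isotopy $u_s$, $s\in[0,1]$, with $u_0=u$, $u_s(0)=u(0)\in\Delta^{reg}$ constant, and $u_s(1)=\beta(s)$. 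The vanishing spheres of $u_s$ form a smoothly varying family in the total space, and at $s=1$ produce an $|\cL|$-vanishing sphere in $X'$ which agrees with $\phi_\beta(L)$ by the uniqueness of parallel transport. Replacing $(X',L')$ by $(X,\phi_\beta^{-1}(L'))$, we may assume $X=X'$.

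\emph{Step 2 (Connecting the two disks through $\Delta^{reg}$).} Let $u,u'\colon D\to|\cL|$ be the disks defining $L,L'\subset X$, with $u(0)=p$, $u'(0)=p'\in\Delta^{reg}$ and $u(1)=u'(1)=X$. The variety $Y$ is irreducible, so its dual variety $\Delta$ is irreducible and $\Delta^{reg}$ is path-connected; pick a path $\alpha\colon[0,1]\to\Delta^{reg}$ from $p$ to $p'$. By the same openness and genericity argument as in Step~1, extend $u_0=u$, $u_1=u'$ to a smooth family of holomorphic embeddings $u_s\colon D\to|\cL|$ with $u_s(0)=\alpha(s)$, each meeting $\Delta$ transversely in the unique interior point~$0$. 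Set $\gamma(s):=u_s(1)\in|\cL|\setminus\Delta$; this is a loop based at $X$. For each $s$, the Lefschetz fibration associated to $u_s$ has a well-defined vanishing sphere $L_s\subset X_{\gamma(s)}$, varying smoothly with $L_0=L$ and $L_1=L'$.

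\emph{Step 3 (Moser and parallel transport).} Let $\phi_\gamma\colon X\to X$ be the symplectic parallel transport around the loop $\gamma$. The smoothly varying family of Lagrangian spheres $L_s$ inside the symplectic bundle $\{X_{\gamma(s)}\}\to[0,1]$ is tautologically horizontal up to a smoothly varying vertical vector field; a standard Moser/fibered isotopy argument then shows that $\phi_\gamma(L_0)$ and $L_1$ are Hamiltonian isotopic in $X$. Composing $\phi_\gamma$ with this Hamiltonian isotopy gives the desired symplectomorphism $\psi\colon X\to X$ with $\psi(L)=L'$.

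\emph{Expected main obstacle.} The non-routine point is the construction of the isotopy $u_s$ in Steps~1 and~2: I need to move both endpoints of a holomorphic disk independently while maintaining a single transverse simple contact with $\Delta$. The argument rests on showing that the space of admissible disks, fibered by the pair $(u(0),u(1))\in\Delta^{reg}\times(|\cL|\setminus\Delta)$, has path-connected fibers. This follows from openness of transversality and the fact that the space of holomorphic embeddings $D\to|\cL|$ with prescribed basepoint and endpoint is contractible, but one must verify that generic paths in this space avoid higher-order tangencies with $\Delta$ and secondary intersections; the reference \cite[1.8]{Lj92} already cited in Definition~\ref{def:lin_vanish} supplies the local normal form near a transverse contact, which is the input needed to make the genericity argument rigorous.
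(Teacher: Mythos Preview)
Your approach is essentially the paper's: connect the nodal points through $\Delta^{reg}$, build a one-parameter family of disks $u_s$, and compare the smoothly varying vanishing spheres with symplectic parallel transport via a step-by-step Moser argument (the paper packages your Steps~2--3 as an auxiliary lemma applied to the total space over $D\times[0,1]$). Your Step~1 is redundant---the paper allows $u_s(1)$ to travel from $X$ to $X'$ directly rather than first reducing to $X=X'$---and the justification ``by the uniqueness of parallel transport'' there is too quick: identifying the vanishing sphere of $u_1$ with $\phi_\beta(L)$ already requires the Moser argument you spell out in Step~3.
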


This lemma is probably well known, but we don't have a clear reference for it,
so we prove it here. An auxiliary lemma is required.

\begin{lemma}
 \label{lem:par_trans_A1}
Let $\pi\co X\to D \times [0, 1]$ be a smooth proper map and $\Omega$ a closed 2-form on $X$. Suppose  that
for every $s \in [0,1]$, $X_{D;s} \coloneqq  \pi^{-1}(D\times \{s\})$, equipped with the restriction of $\Omega$, is a
Lefschetz fibration over $D$ with a unique singularity over $0\in D$. (In particular, the fibres of $\pi$ are symplectic.)
For $t\in D$, $s \in [0, 1]$ denote by $X_{t;s}$ the fibre $\pi^{-1}(\{t\} \times \{s\})$.
Let $L_0 \subset X_{1;0}$ (resp.
$L_1 \subset X_{1;1}$) be a vanishing sphere of the Lefschetz fibration $X_{D;0}$ (resp.~$X_{D;1}$).
Then there is a symplectomorphism $\psi\co X_{1;0} \to X_{1;1}$ such that $\psi(L_0)=L_1$.
\end{lemma}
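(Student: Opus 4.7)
The plan is to construct a smooth family $\{L_s\}_{s\in[0;1]}$ of Lagrangian vanishing cycles with $L_s\subset X_{1;s}$, and then invoke Moser's trick in the $s$-parameter family $\pi^{-1}(\{1\}\times[0;1])\to[0;1]$ to produce the desired symplectomorphism.

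\emph{Step 1 (smooth family of critical points).} The critical points $p_s$ of $\pi|_{X_{D;s}}\colon X_{D;s}\to D$ form a smooth curve $C\subset X$, by the implicit function theorem applied to the nondegeneracy of the Hessian at each $p_s$ (which holds because $\pi|_{X_{D;s}}$ admits the quadratic holomorphic normal form from Definition~\ref{def:lef_fib}).

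\emph{Step 2 (parametric normal form near $C$).} A parametric version of Definition~\ref{def:lef_fib} produces, for each $s$, holomorphic coordinates $(x_1^s,\ldots,x_n^s)$ on a neighbourhood of $p_s$ in $X_{D;s}$ depending smoothly on $s$, in which $\pi|_{X_{D;s}}=(x_1^s)^2+\cdots+(x_n^s)^2$ and $\Omega|_{X_{D;s}}$ is K\"ahler. Running the Moser deformation used in the paragraph after Definition~\ref{def:lef_fib} smoothly in $s$, one obtains, for all sufficiently small $\epsilon>0$, a smooth family of $\Omega$-Lagrangian spheres $\ell_s\subset X_{\epsilon;s}$ whose image in the local coordinates is the standard real $\sqrt{\epsilon}$-sphere.

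\emph{Step 3 (extend to the fibre at $1$).} Along the radial segment $[\epsilon,1]\subset D$, symplectic parallel transport inside each $X_{D;s}$ avoids the critical point $p_s$, so it varies smoothly in $s$. Setting $L_s:=\mathrm{Par}^{[\epsilon,1]}_{X_{D;s}}(\ell_s)\subset X_{1;s}$ gives a smoothly $s$-varying Lagrangian sphere which, by Definition~\ref{def:van_sph_lef_fib}, represents the vanishing cycle of $X_{D;s}$; in particular $L_0$ and $L_1$ are Hamiltonian isotopic to the $L_0$ and $L_1$ of the statement.

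\emph{Step 4 (Moser in families).} The total space $Y:=\pi^{-1}(\{1\}\times[0;1])$ is a smooth symplectic fibration over $[0;1]$ with fibre $X_{1;s}$, and $\bigsqcup_s L_s\subset Y$ is a smooth submanifold diffeomorphic to $S^n\times[0;1]$ which is Lagrangian in each fibre. The parametric Weinstein Lagrangian neighbourhood theorem (equivalently, Moser's trick applied to the symplectic connection on $Y$ restricted to a tubular neighbourhood of $\bigsqcup_s L_s$) yields a smooth isotopy of symplectomorphisms $\psi_s\colon X_{1;0}\to X_{1;s}$ with $\psi_s(L_0)=L_s$. Taking $\psi:=\psi_1$, followed by precomposition with a Hamiltonian isotopy of $X_{1;0}$ to account for the Hamiltonian isotopies in Step 3, completes the proof.

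\emph{Main obstacle.} The technical heart is Step 2: one needs the holomorphic Morse chart near $p_s$ and the Moser adjustment of $\Omega|_{X_{D;s}}$ both to depend smoothly on $s$. These are parametric versions of standard statements and should go through without surprises, but the care lies in keeping track of smoothness in $s$ throughout what are really $s$-by-$s$ constructions (and in particular in producing a genuinely smooth family of local holomorphic coordinates near $C$, even though no global complex structure is assumed on $X$).
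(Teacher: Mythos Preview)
Your proof is correct and follows essentially the same strategy as the paper: construct a smooth family $L_s \subset X_{1;s}$ of vanishing cycles (the paper simply asserts this with a reference to \cite[proof of Lemma~16.2]{SeiBook08}, whereas you spell out Steps~1--3), then use symplectic parallel transport in the $s$-direction over $\{1\}\times[0;1]$ to produce the symplectomorphism. The only real difference is in your Step~4: instead of modifying the connection near $\bigsqcup_s L_s$, the paper takes the unmodified parallel transport $\phi_s$ and corrects $\phi_s(L_0)$ to $L_s$ by Hamiltonian isotopies in finitely many small $s$-steps, using that $C^\infty$-close Lagrangian spheres are Hamiltonian isotopic.
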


\begin{proof}
One can choose a smooth family of Lagrangian spheres $L_s\subset X_{1;s}$ such that $L_s$ is vanishing for 
the fibration on $X_{D,s}$, and $L_0,L_1$ are the given spheres.
This is easily seen from our definition or from \cite[proof of Lemma 16.2]{SeiBook08}.

Fix $s\in [0;1]$ and let
$\phi_\epsilon \co  X_{1;s} \to X_{1;s+\epsilon}$ be the parallel transport with respect to $\Omega$ \cite[Section 15a]{SeiBook08} along the $s$-direction. 
Look at $\phi_\epsilon(L_s)$ and $L_{s+\epsilon}$: these are two Lagrangian spheres in $X_{1;s+\epsilon}$ which coincide when  $\epsilon=0$, so they remain sufficiently close to each other for $\epsilon$ small enough, say $|\epsilon|<\epsilon(s)$. Being sufficiently close,
the two spheres are Hamiltonian isotopic inside $X_{1;s+\epsilon}$. By composing $\phi_{\epsilon}$ with this Hamiltonian
isotopy, we get a symplectomorphism $\psi_{\epsilon} \co  X_{1;s} \to X_{1;s+\epsilon}$ taking $L_s$ to $L_{s+\epsilon}$. 

The open cover of $[0,1]$ consisting of the intervals $\{(s-\epsilon(s),s+\epsilon(s))\}_{s\in [0;1]}$ admits a finite subcover.
We know that for $s,s'$ within a single interval, $L_s$ can be taken to $L_s'$ by a symplectomorphism $X_{1;s}\to X_{1;s'}$; using the finite subcover, we are able to find a finite composition of such maps which is a symplectomorphism
$X_{1;0}\to X_{1;1}$ taking $L_0$ to $L_1$.
\end{proof}

\begin{proof}[Proof of Lemma~\ref{lem:van_unique}]
Let $u,u'\co D\to \P H^0(Y,\cL)$ be two holomorphic maps as in Definition~\ref{def:lin_vanish},
and denote $X=X_{u(1)}$, $X'=X_{u'(1)}$.
By Definition~\ref{def:lin_vanish}, $u(0),u'(0)\in \Delta^{reg}$. 
 Since $\Delta^{reg}$ is connected, one can find a path $\alpha(s) \in \Delta^{reg}$ from $u(0)$ to
$u'(0)$, $s\in [0,1]$. Next one can find an $s$-parametric family of holomorphic disks $u_s\co D\to \P H^0(Y,\cL)$
such that $u_0=u$, $u_1=u'$, $u_s(0)\in\Delta^{reg}$ and $u_s(D)$ intersects $\Delta^{reg}$ transversely. Consider the space 
$$E\coloneqq \{(x,u_s(t)):\   t\in D,\ s\in [0,1],\ x\in X_{u(t)}\}\subset Y\times \P H^0(Y,\cL).$$
It carries a closed $2$-form which is the restriction of the product  K\a hler form to $Y$ and $\P H^0(Y,\cL)$. 
There is also a canonical projection  $E\to D\times [0,1]$. With these data, $E$ satisfies conditions of Lemma~\ref{lem:par_trans_A1}.
This lemma provides the desired symplectomorphism $\psi\co X\to X'$ taking an given $|\cL|$-vanishing sphere in $X$
to a given one in $X'$.
\end{proof}

\subsection{Dehn twists}
\label{subsec:dehn_twists}
We recall the definition of Dehn twists from \cite[Section (16c)]{SeiBook08}.
First, one defines the Dehn twist as a compactly supported symplectomorphism of $T^*S^n$.
Fix the standard round metric on $S^n$, and let $|\xi|$ be the norm function on $T^*S^n$. It is non-smooth at the 0-section; away from the 0-section, its Hamiltonian flow is the normalised geodesic flow. Take a function $b(r)\co \R\to \R$ with compact support and such that $b(r)-b(-r)=-r$. 
The Dehn twist $\tau\co  T^*S^n\to T^* S^n$ is the $2\pi$-flow of the Hamiltonian function
$b(|\xi|)$. It extends smoothly to the 0-section by the antipodal map, thanks to the special form of $b(r)$.
As a result, $\tau$ is a compactly supported symplectomorphism of $T^*S^n$.
Its behaviour in $T^*S^n$ is well understood.
\begin{theorem}
	\label{th:twist_tstar}
	\begin{enumerate}
		\item $\tau$ has infinite order in $\Symp^c(T^*S^n)/\Ham^c(T^*S^n)$, the group of compactly supported symplectomorphisms of $T^*S^n$
		modulo compactly-supported symplectic isotopy.
		\item If $n$ is even, $\tau$ has finite order in $\pi_0 \Diff^c(T^*S^n)$, the group of compactly-supported diffeomorphisms of $T^*S^n$
		modulo compactly-supported isotopy \cite{Kry05}. \qed
	\end{enumerate}
\end{theorem}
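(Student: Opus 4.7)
Part (2) is precisely the theorem of Krylov \cite{Kry05} and is stated here only for reference, so there is nothing new to prove. For part (1), my plan is to bound the dimension of Lagrangian Floer cohomology from below. Let $L = S^n \subset T^*S^n$ denote the zero section; it is an exact compact Lagrangian in the Liouville manifold $T^*S^n$. In the framework of \cite{SeiBook08}, $HF^*(L,L)$ is well-defined and canonically isomorphic to $H^*(S^n;\Lambda)$, so it has total dimension $2$. If $\tau^k$ were compactly-supported Hamiltonian isotopic to the identity for some $k \neq 0$, then $\tau^k(L)$ would be Hamiltonian isotopic to $L$ and we would get $\dim HF^*(L, \tau^k L) = 2$. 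The goal is to show this fails for every $k \neq 0$.

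The main tool will be Seidel's long exact triangle for the Dehn twist, which relates $HF^*(L, \tau L')$, $HF^*(L, L')$, and $HF^*(L,L) \otimes HF^*(L,L')$ for any exact Lagrangian $L'$. Setting $L' = \tau^{k-1}(L)$ and feeding in $HF^*(L,L) \cong H^*(S^n)$ gives a recursion on $\dim HF^*(L, \tau^k L)$; one reads off that this dimension grows (roughly linearly) with $|k|$ and in particular exceeds $2$ for every $k \neq 0$. This contradicts the hypothetical isotopy and yields $\tau^k \notin \Ham^c(T^*S^n)$, which is part (1).

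The main obstacle is the rigorous setup in the noncompact Liouville setting: one has to pick almost complex structures of contact type at infinity so that Floer trajectories stay in a compact region, and arrange gradings via Maslov indices along the Reeb flow on the unit conormal bundle, so that the triangle is actually a triangle of graded $\Lambda$-vector spaces. All of this is standard in the Fukaya-categorical literature on cotangent bundles but represents the technical heart of the argument. A more elementary alternative, if one wants to avoid the exact triangle, is to perturb $\tau^k$ by a $C^2$-small compactly-supported Hamiltonian isotopy (using the explicit description of $\tau$ as the time-$2\pi$ flow of a function of the norm $|\xi|$) so that $\tau^k(L) \cap L$ consists of a growing number of transverse intersection points whose Maslov indices take only two consecutive values across all of them but grow in count with $|k|$; a grading or action argument then forces these points to survive in Floer cohomology, so that $\dim HF^*(L, \tau^k L)$ is unbounded in $k$ and the same contradiction is reached.
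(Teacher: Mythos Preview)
There is a genuine gap in your argument. The Dehn twist restricts to the zero section $L$ as the antipodal map (as stated just before the theorem), so $\tau(L)=L$ as an unparametrised Lagrangian submanifold, and hence $\tau^k(L)=L$ for every $k$. Consequently $\dim HF^*(L,\tau^kL)=\dim HF^*(L,L)=2$ for all $k$, and your recursion with $L'=\tau^{k-1}L$ collapses: every step feeds the same pair $(L,L)$ back into the exact triangle, so no growth can appear. Your geometric alternative fails for the same reason---after a $C^2$-small Hamiltonian perturbation the intersection $\tau^k(L)\cap L$ is governed by a Morse function on $S^n$ and is independent of $k$.

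What does detect the iterate is the $\Z$-grading, available since $c_1(T^*S^n)=0$. Give $L$ a grading $\tilde L$; then $\tau(\tilde L)\cong\tilde L[1-n]$ as graded Lagrangians (this is the spherical-twist formula $T_L(L)\cong L[1-n]$, or can be read off from the graded version of the very exact triangle you invoke). For $n\ge 2$ the shift $1-n$ is nonzero, so $\tau^k(\tilde L)\cong\tilde L[k(1-n)]\not\cong\tilde L$ when $k\neq 0$; since a compactly supported Hamiltonian isotopy preserves gradings, $\tau^k$ is not Hamiltonian isotopic to the identity. The case $n=1$ is the classical Dehn twist on the annulus. If you prefer a growth argument, replace the zero section by a cotangent fibre $F$: unlike $L$, the Lagrangians $\tau^k(F)$ genuinely vary with $k$.

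Finally, note that the paper itself supplies no proof of this theorem: the $\qed$ follows the statement, and it is quoted as a known result, with part~(1) essentially due to Seidel and part~(2) to Krylov.
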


When $n=2$ it is further known that  $\tau$ generates $\pi_0 \Symp^c(T^*S^2)\cong \Z$, and $\tau^2$ is smoothly isotopic to $\id$
in $\Diff^c(T^*S^2)$ \cite{SeiL4D},  see also \cite[Theorem~1.21]{Av12}.

Next, if $L\subset X$ is a Lagrangian sphere in any symplectic manifold,
a neighbourhood of $L$ in $X$ is symplectomorphic to a neighbourhood of the 0-section in $T^*S^n$.
So one can pull back $\tau$ using this symplectomorphism and then extend it by the identity to get a map $\tau_L\co X\to X$.
It is a symplectomorphism uniquely defined up to Hamiltonian isotopy (once a parameterisation of $L$ is fixed), supported in a neighbourhood of $L$.

\begin{definition}[Dehn twist]
	\label{def:dehn_twist}
	The symplectomorhism $\tau_L\co X\to X$ is called the Dehn twist around $L$.
\end{definition}

\begin{lemma}[Picard-Lefschetz formula, \cite{Lj92}]
	\label{lem:picard_lef}
	If $\dim_\R X=2n$ and $L\subset X$ is a Lagrangian sphere, then $(\tau_L)_*$ acts by $\id$ on $H_i(X)$, $i\neq n$.
	For any $[A]\in H_n(X)$,
	$$ (\tau_L)_*[A]=[A]-\epsilon\cdot ([L]\cdot [A])[L].$$
	Here $\epsilon= (-1)^{\frac 1 2 n(n-1)}$.
	Consequently:
	\begin{enumerate}
		\item if $n$ is even, then $(\tau_L)_*^2$ acts by $\id$ on $H_*(X)$.
		\item 
		if $n$ is odd and $[L]\in H_n(X;\R)$ is non-zero, then
		$(\tau_L)_*$ is an automorphism of infinite order of $H_*(X)$.\qed
	\end{enumerate}
\end{lemma}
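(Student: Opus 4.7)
The plan is to reduce the computation to the local model near $L$. Since $\tau_L$ is supported in a Weinstein neighborhood $U\subset X$ of $L$, symplectomorphic to a disk-bundle neighborhood of the zero section in $T^*S^n$, the action of $(\tau_L)_*$ on $H_*(X)$ is controlled by its action on $H_*(U,U\setminus L)$ via the long exact sequence of the pair.

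For $i\neq n$ I would first argue that every class in $H_i(X)$ is represented by a cycle disjoint from $L$: for $i<n$ this follows from transversality ($i+n<2n$); for $n<i<2n$ it follows from the Thom isomorphism $H_i(U,U\setminus L)\cong H_{i-n}(S^n)=0$; for $i=2n$ (if $X$ is compact) from the fact that $\tau_L$ is orientation-preserving (being symplectic). Any such cycle is fixed by $\tau_L$, so $(\tau_L)_*=\id$ on $H_i(X)$ for $i\neq n$.

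For $i=n$, represent $[A]\in H_n(X)$ by a cycle transverse to $L$ at signed points $p_1,\ldots,p_k$, so $[L]\cdot[A]=\sum_i \epsilon_i$ with $\epsilon_i\in\{\pm 1\}$. In a Weinstein chart about each $p_i$ I would deform $A$ to agree with a cotangent fiber $F_i\subset T^*S^n$. Then $(\tau_L)_*[A]-[A]=\sum_i[\tau_L(F_i)-F_i]$, and the computation reduces to the local identity
$$[\tau(F)-F]=-\epsilon\cdot\epsilon_i\cdot [L]\quad\text{in chains supported near the zero section of }T^*S^n.$$
This identity can be checked explicitly in the standard Lefschetz model $\pi(z_0,\ldots,z_n)=z_0^2+\cdots+z_n^2$ on $\C^{n+1}$, whose monodromy around the critical value coincides with $\tau_L$ on the regular fiber and in which $L$ and the cotangent fibers admit explicit real descriptions. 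The sign $(-1)^{n(n-1)/2}$ arises when comparing the canonical orientation on $L$ to the one induced by the complex structure on the fiber. This sign bookkeeping is the main obstacle; the rest of the argument is routine. Summing over the $p_i$ yields the formula $(\tau_L)_*[A]=[A]-\epsilon([L]\cdot[A])[L]$.

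For the consequences, iterate once: $\tau_L^2[A]=[A]-\epsilon([L]\cdot[A])([L]+\tau_L[L])$. The self-intersection of a Lagrangian sphere is $[L]\cdot[L]=\epsilon\chi(S^n)=\epsilon(1+(-1)^n)$. When $n$ is even this equals $2\epsilon$, so $\tau_L[L]=[L]-\epsilon(2\epsilon)[L]=-[L]$ and thus $\tau_L^2[A]=[A]$, proving (1). When $n$ is odd, $[L]\cdot[L]=0$, so $\tau_L[L]=[L]$ and induction gives $\tau_L^k[A]=[A]-k\epsilon([L]\cdot[A])[L]$. If $[L]\neq 0$ in $H_n(X;\R)$, Poincar\'e--Lefschetz duality produces a class $[A]$ (possibly in Borel--Moore homology) with $[L]\cdot[A]\neq 0$, whence $\tau_L^k[A]\neq[A]$ for all $k\neq 0$, proving (2).
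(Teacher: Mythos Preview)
The paper does not prove this lemma at all: it is stated with attribution to \cite{Lj92} and closed with a \qed, so there is no ``paper's own proof'' to compare against. Your sketch is the standard argument one finds in the singularity-theory literature, and the reduction to the local model plus the explicit check of the sign $(-1)^{n(n-1)/2}$ in the Lefschetz fibration $\pi(z)=\sum z_i^2$ is exactly how the result is usually established.

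Your derivation of the two consequences is clean and correct; the only point worth a remark is the hedge ``possibly in Borel--Moore homology'' at the end. In the paper $X$ is always a compact divisor, so ordinary Poincar\'e duality already furnishes a class $[A]\in H_n(X;\R)$ with $[L]\cdot[A]\neq 0$ whenever $[L]\neq 0$, and the Borel--Moore detour is unnecessary. If you wanted to state (2) for genuinely noncompact $X$, you would need to check that the Picard--Lefschetz formula also holds on Borel--Moore classes (it does, since $\tau_L$ is proper and supported near $L$), but that lies outside the scope of how the lemma is used here.
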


Summarising
Theorem~\ref{th:twist_tstar}(2) and Lemma~\ref{lem:picard_lef}(2), we arrive to
the following well known statement.

\begin{corollary}
	\label{cor:twist_homol_order}
	Let $\dim X_\R=2n$ be a compact symplectic manifold and $L\subset X$  a Lagrangian sphere non-zero in $H_n(X;\R)$. 
	\begin{enumerate}
		\item If $n$ is even, $\tau_L$ has finite order in $\pi_0 \Diff (X)$,
		\item if $n$ is odd, $\tau_L$ has infinite order in $\pi_0\Diff(X)$. \qed
	\end{enumerate}
\end{corollary}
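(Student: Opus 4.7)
The plan is to deduce the two cases directly from the two cited inputs, treating each separately.

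For part (2), assume $n$ is odd and $[L]\neq 0\in H_n(X;\R)$. Since $\pi_0\Diff(X)$ acts on $H_*(X;\R)$, it suffices to show that $(\tau_L)_*:H_*(X;\R)\to H_*(X;\R)$ has infinite order. This is exactly the content of Lemma~\ref{lem:picard_lef}(2): in odd middle dimension the Picard--Lefschetz formula gives $(\tau_L)_*^k[A]=[A]-k\epsilon\,([L]\cdot[A])[L]$ (interpreting $(\tau_L)_*$ in the $\{[L],[A]\}$-plane for any $[A]$ with $[L]\cdot[A]\neq 0$, which exists by Poincar\'e duality since $[L]\neq 0$ in real homology). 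Hence $(\tau_L)_*^k\neq \id$ for every $k\geq 1$, so $\tau_L^k$ is not smoothly isotopic to $\id$.

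For part (1), assume $n$ is even. By Theorem~\ref{th:twist_tstar}(2), the model Dehn twist $\tau\in\Diff^c(T^*S^n)$ has finite order in $\pi_0\Diff^c(T^*S^n)$, i.e.\ there exists $k\geq 1$ and a compactly supported smooth isotopy $\{\tau_s\}_{s\in[0;1]}$ with $\tau_0=\tau^k$ and $\tau_1=\id_{T^*S^n}$, supported inside a fixed compact set $K\subset T^*S^n$. The construction of $\tau_L$ proceeds by choosing a Weinstein tubular neighbourhood $\Psi:U\hookrightarrow X$ of $L$ with $\Psi^{-1}(L)$ the zero section of $T^*S^n$, and, after rescaling $\tau^k$ so that its support lies inside $\Psi(U)$, setting $\tau_L^k=\Psi\circ\tau^k\circ\Psi^{-1}$ extended by the identity outside. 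The same rescaling applied to the isotopy $\{\tau_s\}$ produces a smooth isotopy of $X$ from $\tau_L^k$ to $\id_X$ by extending each $\tau_s$ by the identity. This shows $\tau_L^k\in\Diff(X)$ is smoothly isotopic to $\id_X$, so $\tau_L$ has finite order in $\pi_0\Diff(X)$.

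There is essentially no obstacle: both parts are formal consequences of the cited facts together with the observation that the transplantation procedure defining $\tau_L$ carries compactly supported isotopies in $T^*S^n$ to smooth isotopies in $X$. The only point requiring brief care is ensuring that after rescaling to fit the support of $\tau$ inside a Weinstein neighbourhood of $L$, the resulting map is smoothly isotopic (not merely Hamiltonian isotopic up to conjugation by a rescaling) to the original $\tau_L^k$; this is immediate from the fact that rescalings of $T^*S^n$ are smoothly isotopic to the identity through diffeomorphisms.
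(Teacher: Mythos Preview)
Your proof is correct and follows exactly the approach the paper indicates: the paper's proof consists solely of the sentence ``Summarising Theorem~\ref{th:twist_tstar}(2) and Lemma~\ref{lem:picard_lef}(2), we arrive to the following well-known statement,'' and you have simply spelled out the two deductions in detail. One tiny wording slip: when you say ``rescaling $\tau^k$ so that its support lies inside $\Psi(U)$,'' you mean inside $U\subset T^*S^n$ (the domain of $\Psi$), not its image in $X$; the argument is unaffected.
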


The next lemma relates Dehn twists and Lefschetz fibrations, see \cite[(15b)]{SeiBook08}
for details.

\begin{lemma}[\cite{Sei03_LES,SeiBook08}]
	\label{lem:dehn_tw_and_monodromy}
	Let $(E,\Omega,\pi)$ be a Lefschetz fibration with a unique singularity.
	Let $E_1$ be its regular fibre and $L\subset E_1$ a vanishing Lagrangian sphere.
	Then the Dehn twist $\tau_L\co E_1\to E_1$ is Hamiltonian isotopic to the symplectic monodromy map $E_1\to E_1$ obtained by applying  symplectic parallel transport   to the fibres $E_t$ along the circle $t\in\bd D$.\qed
\end{lemma}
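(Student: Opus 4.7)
The plan is to reduce everything to an explicit computation in the standard local model $\pi_0:\C^n\to \C$, $\pi_0(x_1,\ldots,x_n)=x_1^2+\cdots+x_n^2$, with the standard K\"ahler form $\Omega_0$, and then identify the resulting monodromy with the model Dehn twist on $T^*S^{n-1}$.

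First I would localize the monodromy. Outside any neighborhood $U(p)$ of the critical point, $\pi$ is a proper submersion with symplectic fibers, so symplectic parallel transport along any two loops in $D\setminus\{0\}$ that are homotopic through such loops produces Hamiltonian-isotopic maps outside $U(p)$. Contracting $\partial D$ down to a small circle $\gamma_\epsilon(\theta)=\epsilon e^{i\theta}$ contained entirely inside the image of the Morse chart, and using that parallel transport on a trivial symplectic bundle over an annulus is Hamiltonian-isotopic to the identity, reduces the problem to computing monodromy along $\gamma_\epsilon$. By the Moser argument from \cite[Lemma~1.6]{Sei03_LES} already invoked to build the vanishing sphere, $\Omega$ may be deformed to $\Omega_0$ inside a smaller neighborhood of $p$ through K\"ahler forms keeping the fibers symplectic; parallel transport depends continuously on $\Omega$ in this family, so the monodromy is Hamiltonian-isotopic to that for $\Omega_0$, and under the deformation the vanishing sphere is carried to the model $S^{n-1}=\{x\in\R^n:|x|^2=\epsilon\}$.

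Next I would compute the monodromy in the standard model. The $O(n,\R)$-action on $\C^n$ preserves $\Omega_0$ and commutes with $\pi_0$, while $\pi_0$ is equivariant under the $U(1)$-action $(\lambda,x)\mapsto(\lambda^2,\lambda x)$. The horizontal lift of $\partial_\theta$ at a point $x$ over $\epsilon e^{i\theta}$ is then pinned down by $O(n,\R)$-equivariance and the symplectic-orthogonality condition, and an explicit calculation (carried out in \cite[(16b)]{SeiBook08}) shows that integrating this vector field from $\theta=0$ to $\theta=2\pi$ realises the time-$\pi$ normalised geodesic flow on the vanishing $S^{n-1}\subset E_\epsilon$ post-composed with the antipodal map. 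Under the standard diffeomorphism between a neighborhood of the zero section in $T^*S^{n-1}$ and a neighborhood of the vanishing sphere in $E_\epsilon$, this agrees with the time-$2\pi$ flow of a Hamiltonian $h(|\xi|)$, where $h:\R\to\R$ satisfies $h(r)-h(-r)=-r$ (this identity encodes the antipodal jump at the zero section).

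Finally I would compare this with the model Dehn twist. Both the computed monodromy and $\tau$ restrict to the antipodal map on the zero section of $T^*S^{n-1}$, are $O(n,\R)$-equivariant, and are generated by $O(n,\R)$-invariant Hamiltonians $h(|\xi|)$, $b(|\xi|)$ whose odd parts in $r$ coincide by virtue of $h(r)-h(-r)=b(r)-b(-r)=-r$. Their difference is therefore generated by an even function of $|\xi|$, and after truncating $h$ to have compact support (matching $b$ outside a neighborhood of the zero section), the resulting deformation is through compactly supported Hamiltonians. Gluing the local model back into $E$ shows that the global monodromy of $\partial D$ is compactly supported and Hamiltonian-isotopic to $\tau_L$. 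The main obstacle is the explicit parallel transport computation of Step 2, including tracking the sign conventions responsible for the antipodal behaviour on the zero section; this is non-trivial but standard, and is carried out in detail in \cite[(16c)]{SeiBook08}.
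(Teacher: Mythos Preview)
The paper does not prove this lemma: it is stated with a terminal \qed and attributed to \cite{Sei03_LES,SeiBook08}, so there is no proof in the paper to compare your proposal against. Your sketch is a faithful outline of the argument actually given in those references (localise near the critical value, Moser to the standard model, compute the monodromy of $\sum x_i^2$ explicitly, and match it with the model twist on $T^*S^{n-1}$), so it is correct and appropriate here.
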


\begin{remark}
	\label{rem:ord_twist_known}
	Let $X$ be a symplectic manifold and $L\subset X$  a Lagrangian sphere; assume $L$ is non-zero in $H_n(X)$.
	There are three main previously known cases when $\tau_L$ has infinite order in $\Symp(X)/\Ham(X)$ (if $X$ is non-compact, consider $\Symp^c(X)/\Ham^c(X)$ instead):
	
	\begin{enumerate}
		\item $\half \dim_\R X$ is odd, as explained above;
		\item $X$ is exact with contact type boundary, and $L$ is exact (Seidel, unpublished);
		\item  $X$ is Calabi-Yau, and there is another Lagrangian sphere $L'$ intersecting $L$ once transversely \cite{Sei00}.
	\end{enumerate}
	
	Let $X=Bl_k \P^2$ be the blowup of $\P^2$ in $k$ generic points, $2\le k\le 8$, with the monotone symplectic form, and $L\subset X$ be any Lagrangian sphere.
	Seidel \cite{SeiL4D} showed that $\tau_L$ has order~$2$ in $\Symp(X)/\Ham(X)$ when $k=2,3,4$
	and has order greater than $2$ when $k=5,6,7,8$, but did not prove it was infinite.
	Note that $X=Bl_6\P^2$  is the cubic surface $X\subset \P^3$, to which Theorem~\ref{th:twist_inf_grass} applies.
\end{remark}

\end{section}

\begin{section}{Constructing invariant Lagrangian spheres}
	\label{sec:van_sph_construct}
	The aim of this section is to state and prove
	Proposition~\ref{prop:a2_in_sym_div}, which
	will later be used  to prove Theorem~\ref{th:twist_inf_general}.
	We start by stating an essentially known lemma which can be  used to prove the simple case of Theorem~\ref{th:twist_inf_grass} when $\dim_\C X$ is odd.
	
	\begin{lemma}
		\label{lem:a2_chain_div}
		Let $\cL$ be a very ample line bundle over a K\" ahler manifold $Y$.
		For any $d\ge 3$, every smooth divisor $X\subset Y$ in the linear system $|\cL^{\otimes d}|$
		contains two $|\cL^{\otimes d}|$-vanishing Lagrangian spheres $L_1,L_2$ that intersect transversely, once.\qed
	\end{lemma}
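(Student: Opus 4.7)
The strategy is to degenerate the smooth divisor $X$ to a divisor carrying a single $A_2$ (cusp) singularity, within the linear system $|\cL^{\otimes d}|$. The $A_2$-Milnor fibre contains the standard pair of Lagrangian spheres meeting transversely at exactly one point, and each of them is a vanishing sphere for an $A_1$-smoothing lying inside the miniversal unfolding of $A_2$. The hypothesis $d\ge 3$ enters exactly because a divisor of $\cL^{\otimes d}$ can realise a cubic term in local coordinates only for $d\ge 3$.

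More concretely, I would first fix a point $p\in Y$, local coordinates $x_1,\ldots,x_n$ centred at $p$, and a local trivialisation of $\cL^{\otimes d}$. Very ampleness of $\cL$ together with $d\ge 3$ should produce global sections $s_0,\sigma_0,\sigma_1\in H^0(Y,\cL^{\otimes d})$ whose local expressions near $p$ begin with $x_1^2+\cdots+x_{n-1}^2+x_n^3$, $1$, and $x_n$ respectively. After a generic perturbation by a section vanishing to high order at $p$, the divisor $X_0=\{s_0=0\}$ is smooth away from $p$ and has an isolated $A_2$ singularity there. The two-parameter family $s_{(a,b)}=s_0+a\sigma_0+b\sigma_1$ then defines a holomorphic map $u$ from a neighbourhood of $0\in\C^2$ into $\P H^0(Y,\cL^{\otimes d})$ realising, near $p$, the miniversal $A_2$-unfolding
\[
x_1^2+\cdots+x_{n-1}^2+x_n^3+bx_n+a=0,
\]
whose discriminant is the classical cusp curve $4b^3+27a^2=0$.

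Next I pick a generic $(a_*,b_*)$ off this cusp, so that $X':=X_{u(a_*,b_*)}$ is smooth, and I choose two holomorphic embedded disks $\delta_1,\delta_2$ through $(a_*,b_*)$ each meeting the cusp transversely at a single smooth point, with endpoints on opposite branches. By Definition~\ref{def:lin_vanish}, each $\delta_i$ determines an $|\cL^{\otimes d}|$-vanishing Lagrangian sphere $L_i\subset X'$. The standard matching-path description of vanishing cycles in the $A_2$-Milnor fibre (see \cite{SeiBook08}) shows that, for this choice, $L_1$ and $L_2$ may be taken to be supported in the local Milnor fibre around $p$ and to intersect transversely at exactly one point. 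Finally, since $\P H^0(Y,\cL^{\otimes d})\setminus\Delta$ is connected, a smooth path in it from $X'$ to the given $X$ produces, via symplectic parallel transport as in Lemma~\ref{lem:par_trans_A1}, a symplectomorphism $\psi:X'\to X$. The pair $\psi(L_1),\psi(L_2)\subset X$ is then $|\cL^{\otimes d}|$-vanishing (each defining disk is the concatenation of $\delta_i$ with the transport path) and intersects transversely at one point.

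The main obstacle is the first step: showing that for every very ample $\cL$ and every $d\ge 3$, the sections $s_0,\sigma_0,\sigma_1$ with the prescribed local behaviour really exist as global sections of $\cL^{\otimes d}$. This is a $3$-jet generation statement that isolates precisely the role of the hypothesis $d\ge 3$. Once granted, everything else reduces to standard $A_2$ Picard–Lefschetz theory and the symplectic parallel transport already used in the proof of Lemma~\ref{lem:van_unique}.
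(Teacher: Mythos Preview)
Your proposal is correct and follows essentially the same route as the paper. Both arguments produce a divisor in $|\cL^{\otimes d}|$ with an $A_2$ local model $x_1^2+\cdots+x_{n-1}^2+x_n^3$ at a chosen point, use $d\ge 3$ precisely to realise the cubic term by a global section, and then invoke standard $A_2$ Picard--Lefschetz theory for the two spheres, followed by parallel transport to an arbitrary smooth $X$.

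Two packaging differences are worth noting. First, the paper works with a \emph{one}-parameter family (its ``$A_2$ fibration'', Definition~\ref{def:A2_fib}) rather than the full miniversal two-parameter unfolding, and builds both spheres inside a single smooth fibre via matching paths (Lemma~\ref{lem:A2chain_from_fib}); your two-disk picture is of course equivalent. Second, the paper sidesteps your ``main obstacle'' (the $3$-jet generation statement) by writing the required global section explicitly in the projective embedding furnished by $\cL$: in homogeneous coordinates on $\P^N=\P H^0(Y,\cL)^*$ adapted to $p$ and $T_pY$, the polynomial $x_0^{d-3}x_1^3+x_0^{d-2}(x_2^2+\cdots+x_n^2)-t\,x_0^d$ is visibly a section of $\cO_{\P^N}(d)$, and the needed smoothing is obtained by adding $\delta F$ for a generic smooth $F\in|\cL^{\otimes d}|$ (Bertini). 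This also avoids having to argue that the central fibre is smooth away from $p$: the $A_2$-fibration definition tolerates extra singular fibres, and only a generic fibre of the perturbed family need be smooth.
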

	
	The proposition below should be considered as an equivariant version of Lemma~\ref{lem:a2_chain_div}. It  will be used to
	prove the harder case of Theorem~\ref{th:twist_inf_general} when $\dim_\C X$ is even.
	(When applicable, it in particular provides the conclusion of Lemma~\ref{lem:a2_chain_div} itself. So we will not need to prove Lemma~\ref{lem:a2_chain_div} for our purposes, although the arguments in this section can readily be adopted, in fact simplified, to give such a proof.)
	
	\begin{proposition}
		\label{prop:a2_in_sym_div}
		Let $\cL$ be a very ample line bundle over a K\" ahler manifold $Y$, and
		let $\ii\co Y\to Y$ be a holomorphic involution which lifts to an automorphism of $\cL$.
		Fix $d\ge 3$ and
		let $H^0(Y,\cL^{\otimes d})_\pm$ denote the $\pm 1$-eigenspace of the involution on $H^0(Y,\cL^{\otimes d})$ induced by $\ii$. Let $\Pi_\pm$ be as in Theorem~\ref{th:twist_inf_general}. 
		Pick a  connected component $\tilde\Sigma$ of $Y^\ii\subset Y$, $\dim\tilde\Sigma\ge 2$.
		Suppose one of the following:
		\begin{enumerate}
			\item[(a)] $d$ is even;
			
			\item[(b)] $d$ is odd,
			$\tilde \Sigma\subset \Pi_+$, and
			there is a smooth divisor in the linear system $\P H^0(Y,\cL^{\otimes d})_+$.
		\end{enumerate}
		Then there is a smooth divisor $X$ in the linear system $|\cL^{\otimes d}|$
		and two $|\cL^{\otimes d}|$-vanishing Lagrangian spheres $L_1,L_2\subset X$ such that:
		
		\begin{enumerate}
			\item $\ii(X)=X$,  $\Sigma\coloneqq  X\cap \tilde\Sigma$ is smooth, $\dim\Sigma=\dim\tilde \Sigma-1$
			\item $\ii(L_1)=L_1$, $\ii(L_2)=L_2$;
			\item $L_1,L_2$ intersect transversely, at a single point which belongs to $\Sigma$;
			\item $L_i^\ii=L_i\cap\Sigma$ are Lagrangian spheres in $\Sigma$, $i=1,2$. 
			\item for $i=1,2$ one can choose a symplectomorphism $\tau_{L_i}$ of $X$ representing the Hamiltonian isotopy class of the Dehn twist around $L_i$ such that $\tau_{L_i}$ commutes with $\ii$, and $\tau_{L_i}|_{X^\ii}$ is the Dehn twist around $L_i^\ii$.
		\end{enumerate}
		The same is true if we replace symbols $+$ with $-$ in Case (b).
	\end{proposition}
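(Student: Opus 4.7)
The plan is to apply Lemma~\ref{lem:a2_chain_div} on the fixed component $\tilde\Sigma$ and then equivariantly lift the resulting Lefschetz degenerations back to $Y$. First I would apply Lemma~\ref{lem:a2_chain_div} to the pair $(\tilde\Sigma, \cL^{\otimes d}|_{\tilde\Sigma})$; the restricted bundle is very ample of zero defect by Corollary~\ref{cor:power_zero_def}. This produces a smooth divisor $\Sigma \subset \tilde\Sigma$ in $|\cL^{\otimes d}|_{\tilde\Sigma}|$ together with two vanishing Lagrangian spheres $\ell_1, \ell_2 \subset \Sigma$ meeting transversely in one point, realised as vanishing cycles of two holomorphic discs $v_i : D \to \P H^0(\tilde\Sigma, \cL^{\otimes d}|_{\tilde\Sigma})$ crossing the discriminant transversely at one-node divisors with nodes $p_i \in \tilde\Sigma$.

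Next, let $V \subseteq H^0(Y, \cL^{\otimes d})$ be the $+1$-eigenspace of the $\ii$-action, with the sign convention of the proposition. A local computation shows that $\ii$ acts on $\cL^{\otimes d}|_{\tilde\Sigma}$ by $+1$ in both cases -- automatically when $d$ is even (case (a)), and by virtue of $\tilde\Sigma \subset \Pi_+$ when $d$ is odd (case (b)) -- so the restriction map $r : V \to H^0(\tilde\Sigma, \cL^{\otimes d}|_{\tilde\Sigma})$ is well defined and surjective (any section of $\cL^{\otimes d}|_{\tilde\Sigma}$ extends to $Y$ by very ampleness; averaging the extension over $\ii$ preserves its restriction). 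Lift each $v_i$ to $\tilde v_i : D \to \P V$, and adjust by adding a generic element of $\ker r$ (sections vanishing on $\tilde\Sigma$, whose generic projective member cuts out a divisor smooth away from $\tilde\Sigma$ -- guaranteed by Bertini together with the smooth-invariant-divisor hypothesis of case (b), or by the freedom available when $d$ is even in case (a)) so that $\tilde v_i(0)$ has no singularities outside $\tilde\Sigma$ and $\tilde v_i$ is transverse to the discriminant in $\P V$; by a further adjustment in $\ker r$ we may assume $\tilde v_1(1) = \tilde v_2(1) =: X$. In local $\ii$-equivariant coordinates $(z, w)$ near $p_i$ with $\tilde\Sigma = \{w = 0\}$ and $\ii \cdot (z, w) = (z, -w)$, any $\ii$-invariant divisor is defined by a function involving only even powers of $w$, so locally $\tilde X_i(0) = \{f_i(z) + \langle Q_i(z) w, w \rangle + O(|w|^4) = 0\}$; the node at $p_i$ persists iff $\mathrm{Hess}\, f_i(0)$ (inherited from the node of $v_i(0)$ on $\tilde\Sigma$) and $Q_i(0)$ are both nondegenerate, the latter being a generic condition on the lift.

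Applying Definition~\ref{def:lin_vanish} to each $\tilde v_i$ yields $|\cL^{\otimes d}|$-vanishing Lagrangian spheres $L_1, L_2 \subset X$. The construction in Section~\ref{sec:van_spheres_intro} -- a holomorphic Morse chart $x_1, \ldots, x_n$ at the node, the standard Lagrangian sphere $\{\sum x_j^2 = t,\, x_j \in \R\}$, and the Moser-type deformation from $\Omega$ to the standard K\"ahler form -- can be performed $\ii$-equivariantly: an equivariant holomorphic Morse lemma at $p_i$ supplies coordinates splitting into tangential ($z$) and normal ($w$) parts, and averaging the Moser deformation over the $\Z_2$-action generated by $\ii$ produces an invariant isotopy. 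Consequently $\ii(L_i) = L_i$; because $L_i$ is supported in a small $\ii$-equivariant neighbourhood of $p_i \in \tilde\Sigma$ disjoint from the other components of $X^\ii$, its fixed locus equals $L_i \cap \Sigma$ and coincides with the vanishing sphere of the restricted Lefschetz fibration inside $\tilde\Sigma$, namely $\ell_i$. In particular $L_1 \cap L_2 = \ell_1 \cap \ell_2$ is one transverse point of $\Sigma$.

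For clause (5), an $\ii$-equivariant Weinstein neighbourhood of $L_i$ exists by equivariant Darboux--Weinstein, and the Hamiltonian $b(|\xi|)$ defining the Dehn twist in Subsection~\ref{subsec:dehn_twists} is invariant under any isometric $\Z_2$-action on $T^*S^n$; hence $\tau_{L_i}$ may be chosen to commute with $\ii$, and its restriction to $\Sigma$ is the Dehn twist around $L_i^\ii$. The main obstacle will be the genericity of the lifting step: one must control the $\ii$-invariant lift inside $\P V$ so that the only new singularity appears at $p_i \in \tilde\Sigma$ with nondegenerate normal form $Q_i(0)$, while simultaneously forbidding any singularities off $\tilde\Sigma$. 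The hypotheses on $d$ in case (a) and the smooth-invariant-divisor hypothesis in case (b) are precisely what is needed to run this Bertini-style argument inside the constrained section space $V$.
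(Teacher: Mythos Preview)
Your strategy --- build the $A_2$ chain on $\tilde\Sigma$ via Lemma~\ref{lem:a2_chain_div}, then lift the two Lefschetz discs $v_1,v_2$ equivariantly to $\P V$ --- is natural, and your treatment of the equivariant Morse chart, Moser deformation, Weinstein neighbourhood, and $\ii$-equivariant Dehn twist is fine. But there is a genuine gap at the step where you conclude $L_1\cap L_2=\ell_1\cap\ell_2$.

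Your justification is that each $L_i$ is ``supported in a small $\ii$-equivariant neighbourhood of $p_i$''. This cannot hold for both $i$ simultaneously in the common fibre $X$. The nodes $p_1,p_2\in\tilde\Sigma$ are \emph{distinct} points (they are already distinct for the $\ell_i$: in the $A_2$ model on $\tilde\Sigma$ the two vanishing cycles collapse to the two different critical points of $h$, while they meet at a third point which is neither node). If both $L_i$ were confined to disjoint neighbourhoods of $p_1,p_2$, they would not intersect at all; if they are not so confined, your argument only yields $L_1^\ii\cap L_2^\ii=\ell_1\cap\ell_2$ and says nothing about intersections of $L_1$ with $L_2$ away from $\Sigma$. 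Equivariance tells you such extra intersections come in $\ii$-pairs, not that they are absent, and you cannot remove them by Hamiltonian isotopy without destroying $\ii$-invariance.

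The paper sidesteps this by constructing a single involutive $A_2$ degeneration on $Y$ directly: it writes down an explicit $\ii$-invariant pencil $X_t=\{x_0^{d-3}x_1^3+x_0^{d-2}(x_2^2+\dots+x_n^2)-tx_0^d=0\}$ in projective coordinates adapted to $\tilde\Sigma$, smooths it by $\delta F$ with $F$ in the correct eigenspace (using Bertini in Case~(a), the hypothesis in Case~(b)), and then applies Lemma~\ref{lem:A2chain_from_fib_inv}. Because both spheres live in one equivariant $A_2$ chart, their unique transverse intersection is read off from the explicit model. Your route can be repaired by lifting not two separate $A_1$ discs but the underlying $A_2$ family from $\tilde\Sigma$ to $Y$ and checking that a generic normal quadratic form preserves the $A_2$ type; at that point it essentially converges to the paper's argument.
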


	\subsection{$A_2$ chains of  Lagrangian spheres from $A_2$ fibrations}

	\begin{definition}[$A_2$ chain of Lagrangian spheres]
		\label{def:A2_chain_Lags}
		Let $X$ be a symplectic manifold. A pair $(L_1,L_2)$ of two Lagrangian spheres  in $X$ is called 
		an $A_2$-chain if $L_1$ and $L_2$ intersect at a single point, and the intersection is transverse.
	\end{definition}
	
	In Section~\ref{sec:van_spheres_intro} we have seen that how to construct Lagrangian spheres as vanishing cycles of Lefschetz fibrations.
	Similarly, one can get $A_2$ chains of Lagrangian spheres from fibrations with slightly more complicated singularities.
	
	\begin{definition}[$A_2$ fibration]
		\label{def:A2_fib}
		Denote by $D\subset \C$ the open unit disk,
		and by $B_\epsilon\subset \C$ the open disk of radius $\epsilon$. Both disks are centered at $0$.
		
		Suppose $E$ is a smooth manifold, $\Omega$ a closed 2-form on $E$ and $\pi\co  E\to D$ is a smooth map. The triple $(E,\Omega,\pi)$ is called
		an $A_2$ fibration if there is a point $p\in E$ (without loss of generality, we assume $\pi(p)=0\in D$), and a neighbourhood $U(p)$ such that:
		\begin{itemize}
			\item all but a finite number of fibres of $\pi$ are regular, and the restriction of $\Omega$ is symplectic on them;
			\item there exists a complex structure on $U(p)$  with a holomorphic chart $x_1,\ldots,x_n$, $x_i\in B_\epsilon$ such that
			$$\pi(x_1,\ldots,x_n)=x_1^2+\ldots+x_{n-1}^2+h(x_n),$$
			where $h(x_n)$ is holomorphic;
			\item $h(x_n)$  has at least 3 roots within $B_{\epsilon/2}$, counted with multiplicities;
			\item for any $x_n\in B_{\epsilon/2}$, $\sqrt{h(x_n)}\in B_{\epsilon/2}$;
			\item $\Omega|_{U(P)}$ is K\"ahler with respect to the above complex structure.
		\end{itemize}
	\end{definition}
	
	\begin{remark}
		The definition allows $\pi$ to have singularities outside of $U(p)$. 
		Also, the definition does not require $p\co E\to D$ to be a proper map, so the smooth fibres $E_t$ need not be symplectomorphic, as we may not be able to integrate the parallel transport vector fields. The generality of this definition is slightly unusual, but it makes no difference to the local construction of $A_2$ chains of Lagrangian spheres, which is the next thing we discuss.
	\end{remark}
	
	In  order to prove Proposition~\ref{prop:a2_in_sym_div}, we need to introduce $A_2$ fibrations with involutions.
	
	\begin{definition}[Involutive $A_2$ fibration]
		\label{def:A2_fib_inv}
		Let $(E,\Omega,\pi)$ be an $A_2$ fibration. It is called an involutive $A_2$ fibration with 
		involution $\ii\co E\to E$ if 
		in the holomorphic chart from Definition~\ref{def:A2_fib} we have in addition:
		$$\ii(x_1,\ldots,x_l,x_{l+1},\ldots,x_n)=(-x_1,\ldots,-x_l,x_{l+1},\ldots, x_n)$$
		for some $l< n$. We denote by $E^\ii$ the fixed locus of $\ii$.
	\end{definition}
	
	\begin{remark}
		It follows from this definition that $\pi|_{E^\ii}\co E^\ii\to D$ is also an $A_2$ fibration.
		Note that $x\in E^\ii$ is regular for $\pi$ if and only if it is regular for $\pi|_{E^\ii}$.
		Indeed, we can decompose $T_xE=T_xE^\ii\oplus N_x$ where $N_x$ is the $(-1)$-eigenspace
		of $d \ii(x)$. Since $\pi \ii=\pi$, $N_x\subset \ker d\pi(x)$. So $\rk d\pi(x)=\rk d\pi(x)|_{T_xE^\ii}$.
		Consequently, for a regular fibre $E_t$, the fixed locus $E_t^{\ii}$ is smooth.
	\end{remark}
	
	The following is a slight refinement of~\cite[Lemma~6.12]{KhoSe02}.
	
	\begin{lemma}
		\label{lem:A2chain_from_fib}
		Let $\pi\co  E\to D$ be an $A_2$ fibration. Then for every  sufficiently small $t\in D$ such that the fibre $E_t\coloneqq \pi^{-1}(t)$
		is smooth, $E_t$ contains an $A_2$ chain of Lagrangian spheres.
	\end{lemma}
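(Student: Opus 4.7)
The plan is to build the $A_2$ chain inside the holomorphic chart $U(p)$ using the standard K\"ahler form there, and then transport it to the actual symplectic structure $\Omega|_{E_t}$ by the same Moser-type argument already used to construct ordinary vanishing cycles after Definition~\ref{def:lef_fib}. Fix $t\in D$ small enough that $E_t$ is smooth. Smoothness forces $t$ to miss the finitely many critical values of $\pi$; in particular $t$ is not a critical value of $h|_{B_{\epsilon/2}}$, so every zero of $h(x_n)-t$ in $B_{\epsilon/2}$ is simple. By hypothesis $h$ has at least three zeros in $B_{\epsilon/2}$ counted with multiplicity; since those zeros lie in the open disk, $|h|$ attains a positive minimum on $\bd B_{\epsilon/2}$, and Rouch\'e's theorem then provides, for all sufficiently small $t$, at least three simple roots $a_1,a_2,a_3\in B_{\epsilon/2}$ of $h(x_n)-t$.

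I would next perform the standard matching-cycle construction with respect to the local flat K\"ahler form $\Omega_0$. The restriction of the coordinate projection $(x_1,\ldots,x_n)\mapsto x_n$ to $E_t\cap U(p)$ is a Lefschetz fibration over $B_\epsilon$ whose critical values are exactly the zeros of $h(x_n)-t$ in $B_{\epsilon/2}$, and whose vanishing cycle over each regular value $b$ is the real sphere $\{x_i\in\R,\ \sum_{i<n}x_i^2=t-h(b)\}$. Choose embedded arcs $\gamma_{12},\gamma_{23}\subset B_{\epsilon/2}$ joining $a_1a_2$ and $a_2a_3$ respectively, meeting only at $a_2$ and avoiding all other critical values. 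The classical matching-cycle construction assigns to each arc a Lagrangian sphere $L_i^0\subset (E_t\cap U(p),\Omega_0|_{E_t})$ obtained by gluing the $S^{n-2}$-family of vanishing cycles over the arc with its two collapsing endpoints; this is the standard $A_2$ chain appearing in the Milnor fibre of $x^2+y^2+z^3=0$ and its suspensions, so by construction $L_1^0\cap L_2^0$ is the single point $(0,\ldots,0,a_2)$ and the intersection is transverse. The last two bullets of Definition~\ref{def:A2_fib} are precisely what force every such vanishing thimble over $B_{\epsilon/2}$ to sit inside the chart $B_\epsilon^n=U(p)$, so $L_1^0$ and $L_2^0$ are genuinely contained in $U(p)$.

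To finish, I would trade $\Omega_0$ for $\Omega$. Since $\Omega|_{U(p)}$ is K\"ahler with respect to the local complex structure, write $\Omega|_{U(p)}=\Omega_0+dd^c f$ and interpolate $f$ smoothly to $0$ on a slightly smaller neighbourhood still containing $L_1^0\cup L_2^0$. The resulting one-parameter family of closed two-forms is K\"ahler throughout, so its restriction to $E_t$ is a family of symplectic forms, and Moser's lemma provides a smooth isotopy of $E_t$ taking $L_i^0$ to Lagrangian spheres $L_i$ for $\Omega|_{E_t}$; transverse intersection at a single point is preserved because it is a smooth invariant, so $(L_1,L_2)$ is the desired $A_2$ chain. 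The only subtlety worth flagging is the book-keeping that the matching cycles, the Moser homotopy, and all intermediate K\"ahler forms stay inside $U(p)$, which is precisely the technical purpose of the quantitative bounds in Definition~\ref{def:A2_fib}; beyond that, the argument is a direct refinement of the Khovanov--Seidel construction \cite{KhoSe02} cited just before the statement.
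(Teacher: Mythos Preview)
Your proof is correct and follows essentially the same strategy as the paper: construct the matching cycles explicitly in the local chart using the standard K\"ahler form $\Omega_0$, and pass between $\Omega_0$ and the given $\Omega$ via the $dd^c$--Moser argument of \cite[Lemma~1.6]{Sei03_LES}. The only cosmetic difference is the order of operations: the paper first deforms $\Omega$ to make it standard on a smaller ball $U'(p)$ and then writes down the matching spheres there, whereas you build the spheres for $\Omega_0$ first and Moser afterwards; the ingredients and the role of the quantitative bounds in Definition~\ref{def:A2_fib} are identical.
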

	
	We will use the following equivariant analogue of this lemma.
	\begin{lemma}
		\label{lem:A2chain_from_fib_inv}
		Let $\pi\co  E\to D$ be an involutive $A_2$ fibration with an involution $\ii$. 
		Then for every sufficiently small $t\in D$ such that the fibre $E_t\coloneqq \pi^{-1}(t)$
		is smooth, $E_t$ contains an $A_2$ chain of Lagrangian spheres $(L_1,L_2)$ which satisfy
		properties (2)---(5) from Proposition~\ref{prop:a2_in_sym_div}
		with $X\coloneqq E_t$, and $\Sigma$  the connected component of $E_t^\ii$ 
		which is a subset of the connected component of the point $p$ in $E^\ii$. 
	\end{lemma}
	
	\begin{remark}
		Note that $\dim \Sigma=l-1$, where $l$ is the number coming from the co-ordinate chart in Definition~\ref{def:A2_fib_inv}.
	\end{remark}
	
	\begin{proof}[Proof of Lemma~\ref{lem:A2chain_from_fib}]
		Let $U'(p)\subset U(p)$ be the ball around $p$ given by $|x_i|<\epsilon/2$, $i=1,\ldots,n$.
		As in Subsection~\ref{subsec:lef_fib_van_Cyc}, it suffices to assume $\Omega|_{U'(p)}$ is the standard symplectic form
		in the holomorphic chart $(x_1,\ldots,x_n)$ from Definition~\ref{def:A2_fib}. 
		
		The condition that $E_t$ is smooth means the equation $h(x_n)=t$ has no multiple roots
		with $x_n\in B_{\epsilon/2}$. Therefore by Definition~\ref{def:A2_fib}, the equation $h(x_n)=0$ has at least 3 roots with $x_n\in B_{\epsilon/2}$. So for sufficiently small $t$ the equation $h(x_n)=t$ also has at least 3 distinct roots with
		$x_n\in B_{\epsilon/2}$. Pick  three such roots, say $z_1,z_2,z_3\in B_{\epsilon/2}$: $h(z_i)=t$. Let $\gamma_{12}\subset B_{\epsilon/2}$
		be a path from $z_1$ to $z_2$ whose interior avoids the roots of $h-t$.  Define
		$$L_1\coloneqq \bigsqcup_{z\in\gamma_{12}}\{(x_1,\ldots,x_n)\in B_{\epsilon/2}\cap\pi^{-1}(t): \ |x_i|\in \R\cdot \sqrt{-h(z)}\}.$$
		This is a smooth Lagrangian sphere in $\pi^{-1}(t)$  with respect to the restriction of the standard symplectic form on $\C^n$ to $\pi^{-1}(t)$.
		Similarly, let $\gamma_{23}\subset B_{\epsilon/2}\subset \C$
		be a path from $z_2$ to $z_3$ and define $L_2$ by the same formula replacing $\gamma_{12}$ by $\gamma_{23}$.
		If $\gamma_{12}$ and $\gamma_{23}$ are transverse at their common endpoint $z_2$, then $(L_1,L_2)$ is an $A_2$ chain of Lagrangian spheres by \cite[Lemma~6.12]{KhoSe02}.
		Note that $L_1,L_2$ lie in $U'(p)$ by the fourth condition in Definition~\ref{def:A2_fib}.
	\end{proof}

	\begin{proof}[Proof of Lemma~\ref{lem:A2chain_from_fib_inv}]
		We use the notation from the proof  of Lemma~\ref{lem:A2chain_from_fib}.
		Arguing as in that proof $\ii$-invariantly, we can again assume
		$\Omega$ is standard on $U'(p)$. The formulas for $L_1,L_2$ are  invariant under the change $x_i\mapsto -x_i$, $i\le l$, so $L_1,L_2$ are $\ii$-invariant. This proves property~(2)
		from  Proposition~\ref{prop:a2_in_sym_div}.
		Next, we already know $L_1$ intersects $L_2$ transversely at a single point. This point has co-ordinates
		$x_1=0,\ldots,x_{n-1}=0$, $x_n=z_2$. (Recall $z_2$ is a root of $h(x_n)-t$.) This intersection point is $\ii$-invariant, and it obviously belongs to the connected component of the point $p$ in $E^\ii$, so property~(3) from  Proposition~\ref{prop:a2_in_sym_div} holds. Property~(4) is true because
		$E^\ii$ locally around $\pi$ is given by $x_1=\ldots=x_l=0$, and so $L_i\cap\Sigma$ are transverse Lagrangians for the same reason that the $L_i$ are. By their local construction, the $L_i$ do not intersect the connected components of $E_t^\ii$ other than $\Sigma$.
		
		It remains to explain property (5).
		Let $S^{n-1}\subset \R^n$ be the standard unit sphere. Let $\ii_0$ be the involution on $S^n$ which changes the sign of the first $k$ co-ordinates on $\R^n$. It naturally extends to an involution $\ii_0$ on $T^*S^n$.
		It is not hard to check there is an $(\ii,\ii_0)$-equivariant diffeomorphism $V(L_1)\to V(S^n)$
		where $V(L_1)$ is an $\ii$-invariant tubular neighbourhood of $L_1\subset X$ and $V(S^n)$ is an $\ii_0$-invariant
		tubular neighbourhood of the zero-section in $T^*S^n$. Then there is also an $(\ii,\ii_0)$-equivariant symplectomorphism $V(L_1)\to V(S^n)$, by  an equivariant analogue of the Weinstein tubular neighbourhood theorem. The  Dehn twist in $T^*S^n$ is $\ii_0$-equivariant by definition. Its pullback via the equivariant symplectomorphism $V(L_1)\to V(S^n)$ is the desired $\ii$-equivariant Dehn twist inside $E_t$.
	\end{proof}
	
	\subsection{$A_2$ fibrations of divisors from projective embeddings}
	One way of constructing an $A_2$ fibration is to embed all  its fibres $E_t$ as divisors $E_t=X_t\subset Y$ in a single K\"ahler manifold $Y$. This idea can be used to prove Lemma~\ref{lem:a2_chain_div}, and now
	we will run such an argument $\ii$-invariantly to prove Proposition~\ref{prop:a2_in_sym_div}.

	\begin{proof}[Proof of  Proposition~\ref{prop:a2_in_sym_div}]
		Let us recall the setting. We are given a very ample line bundle $\cL\to Y$ over a K\a hler manifold $Y$,
		and  a holomorphic involution $\ii\co Y\to Y$ which lifts to an involution
		on $\cL$. This means $\ii$ induces a linear involution on $H^0(Y,\cL)^*$ splitting it into the direct sum of $\pm 1$ eigenspaces denoted by $H^0(Y,\cL)_\pm^*$. The projectivisations of these eigenspaces are denoted by $\Pi_\pm\subset \P H^0(Y,\cL)^*$. We also denote $\P^N\coloneqq  \P H^0(Y,\cL)^*$, and the $\ii$-induced involution on $\P^N$  by
		$\ii_{\P^N}$.
		The fixed locus of $\ii_{\P^N}$ is
		$\Pi_+\sqcup \Pi_-\subset \P^N$.
		
		Because $\cL$ is very ample, we have an embedding $Y\subset \P^N$, $\cL=\cO_Y(1)\coloneqq \cO_{\P^N}(1)|_Y$, $Y$ is invariant under $\ii_{\P^N}$ and $\ii_{\P^N}|_Y=\ii$, and also
		$$Y^\ii=(Y\cap \Pi_+)\sqcup (Y\cap\Pi_-).$$
		Let $\tilde\Sigma$ be the given connected component of $Y^\ii$ (smooth by assumption), and  $\dim\tilde \Sigma=l$. Then $\tilde \Sigma\subset \Pi_\epsilon$ where $\epsilon$ is one of the two symbols: $+$ or $-$.
		We will also denote by $\epsilon$ the correspondingly signed number $\pm 1$. 
		
		Choose homogeneous co-ordinates $(x_0:\ldots:x_l:x_{l+1}:\ldots:x_{N})$ on $\P^N$
		with the following properties:
		\begin{enumerate}
			\item $\ii_{\P^N}(x_0:\ldots:x_l:x_{l+1}:\ldots:x_{N})=(\epsilon x_0:\ldots:\epsilon x_l:\pm x_{l+1}:\ldots: \pm x_{N+1})$;
			\item $(1:0:\ldots:0)\in \tilde \Sigma$
			\item the plane spanned by $(x_0,\ldots,x_l)$ (other co-ordinates are set to $0$) is the tangent plane to $\tilde\Sigma$ at $(1:0:\ldots:0)$;
			\item for some $n\ge l$, the plane spanned by $(x_0,\ldots,x_n)$ (other co-ordinates are set to $0$) is the tangent plane to $Y$ at $(1:0:\ldots:0)$.
		\end{enumerate}
		The third property implies that
		$x_0,\ldots,x_l$, seen as sections in $H^0(\cO_{\P^N}(1))$,
		belong to the $\epsilon$-eigenspace of  $\ii$. This is in agreement with the first property.
		So  co-ordinates with the above properties exist.
		
		\begin{figure}[h]
			\centerline{\includegraphics{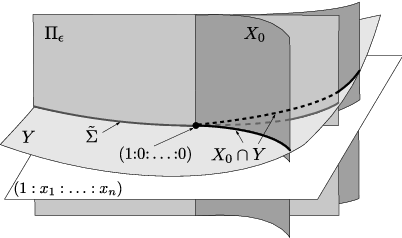}}
			\caption{A divisor $X_0$ from the family $X_t$ constructed in the proof of Proposition~\ref{prop:a2_in_sym_div}.}
			\label{fig:divisors_fig}
		\end{figure}

		In the affine chart $x_0=1$,
		the co-ordinates $(x_1,\ldots,x_n)$ serve as  local co-ordinates for $Y$ near the origin.
		In the chart $x_0=1$, write (see Figure~\ref{fig:divisors_fig}):
		$$
		X_t\coloneqq x_1^3+x_2^2+\ldots+x_n^2-t.
		$$
		We want $X_t$ to be a section of $\cO_{\P^N}(d)$, so in projective co-ordinates we set
		$$
		X_t\coloneqq x_0^{d-3}x_1^3+x_0^{d-2}(x_2^2+\ldots+x_n^2) -tx_0^{d}.
		$$
		From property~(1) of the co-ordinates $x_i$,
		we see that $X_t\circ \ii=\epsilon^{d} X_t$
		as polynomials. In other words:
		\begin{enumerate}
			\item[(a)] if $d$ is even, $X_t\in H^0(\cO_{\P^N}(d))_+$;
			
			\item[(b)] if $d$ is odd, $X_t\in H^0(\cO_{\P^N}(d))_\epsilon$.
		\end{enumerate}
		
		For all $t$, the divisors $\{X_t=0\}$ and $\{X_t=0\}\cap Y$ are reducible and hence singular. We want to smooth 
		the family $\{X_t=0\}\cap Y$ so that a generic divisor in this $t$-family becomes non-singular.
		
		Suppose $d$ is even.
		Then the linear   system $H^0(\cO_{\P^N}(d))_+$ has no base locus as it contains all monomials
		$x_i^d$. Then $H^0(\cO_Y(d))_+=H^0(Y,\cL^{\otimes d})_+$ has no base locus too.
		By Bertini's theorem in characteristic $0$, 
		there exists $F\in H^0(\cO_{\P^n}(d))_+$  such that the divisor $\{F=0\}\cap Y$ is smooth.
		
		Suppose $d$ is odd. Then the linear systems $H^0(\cO_{\P^N}(d))_\pm$ have non-empty base loci, namely $\Pi_\mp$ (see the proof of Lemma~\ref{lem:exist_smooth_inv_div} below).
		Therefore it is not a priori clear that these linear systems contain a smooth divisor.
		This condition is included in the assumptions of Proposition~\ref{prop:a2_in_sym_div}, Case~(b).
		Let $ F\in H^0(\cO_{\P^n}(d))_\epsilon$ be a polynomial such that $\{F=0\}\cap Y$ is smooth. 
		
		The rest of the proof is the same for even and odd $d$. For all generic $\delta\in \C$, the divisors $\{X_t+\delta F=0\}\cap Y$ are smooth except for a finite number of $t$'s.
		Recall that  $(x_1,\ldots,x_n)$ is a holomorphic chart for $Y$ around $(1:0:\ldots:0)$.
		There is another chart $\tilde x_1,\ldots,\tilde x_n$
		in which the divisors $\{X_t+\delta F=0\}\cap Y$ are given by:
		$$
		h(\tilde x_1)+\tilde x_2^2+\ldots+\tilde x_n^2- t+c=0
		$$
		where $h(\tilde x_1)$ is close to $\tilde x_1^3$ (when $\delta$ is small) and $c$ is a small constant.
		Moreover, the change of co-ordinates from $x_i$ to $\tilde x_i$ is $\ii$-equivariant.
		This follows from an equivariant version of the holomorphic Morse splitting lemma \cite{Atiyah58}.
		
		Consider the family
		$\{X_t+\delta F=0\}\cap Y$ of divisors in $Y$, $t\in D$. They are  $\ii$-invariant
		and belong to the linear system $|\cL^{\otimes d}|$.
		Let $E\to D$ be the total space of this family, see  Definition~\ref{def:tot_fam_div}.
		It may be singular; if it is, remove its singular locus to get $E_0$.
		The involution $\ii$ turns $E_0\to D$ into an involutive fibration in the sence of Definition~\ref{def:A2_fib_inv}. So by Lemma~\ref{lem:A2chain_from_fib_inv}, a smooth divisor in the family
		$\{X_t+\delta F=0\}\cap Y$ has a pair of Lagrangian spheres $(L_1,L_2)$ that satisfy properties (2)---(5)
		of Proposition~\ref{prop:a2_in_sym_div}. 
		It is easy to see that  Lemma~\ref{lem:A2chain_from_fib_inv} constructs $L_1,L_2$ which are $|\cL^{\otimes d}|$-vanishing.
		
		It remains to check $\iota$ satisfies property (1). We have to show that the smooth divisors $\{X_t+\delta F=0\}\cap Y$ intersect $\Sigma=\tilde\Sigma \cap Y$ transversely.  Suppose $X\coloneqq \{X_t+\delta F=0\}\cap Y$
		intersects $\Sigma$ non-transversely at one point $p$, so $T_p\Sigma\subset T_p X$ (the tangent spaces are taken inside $Y$). This means $T_pX$ contains $\dim \Sigma$ positive ($+1$) eigenvalues of $d\ii$. Then the same must hold for all intersection points $X\cap \Sigma$, and hence $T_p\Sigma\subset T_p X$ for any $p\in X\cap \Sigma$.
		But in a neighbourhood of $(1:0:\ldots:0)$ the intersection $X\cap\Sigma$ is transverse, which is easily verified in
		the local chart $(x_1,\ldots, x_n)$ from above. So $X$ intersects $\Sigma$ transversely everywhere.
		Similarly, every other connected component of $Y^\ii$ either intersects $X$ transversely or is contained in $X$.
	\end{proof}

\end{section}

\section[Proofs of the theorems about Lagrangian spheres]{Proofs of the theorems about Lagrangian spheres in divisors}
\label{sec:proofs_spheres_divs}

\begin{proof}[Proof of Theorem~\ref{th:twist_inf_general}]
	Apply Proposition~\ref{prop:a2_in_sym_div} to $Y,\cL,\tilde \Sigma$ given by the hypothesis of Theorem~\ref{th:twist_inf_general}.
	Proposition~\ref{prop:a2_in_sym_div} returns an $|\cL^{\otimes d}|$-divisor $X\subset Y$ and $|\cL^{\otimes d}|$-vanishing Lagrangian spheres $L_1,L_2\subset X$ satisfying the conditions enumerated there.
	Because  $|\cL^{\otimes d}|$-vanishing spheres are unique up to symplectomorphism (Lemma~\ref{lem:van_unique}), it suffices to show that $\tau_{L_1}$ has infinite order in $\Symp(X)/\Ham(X)$.
	To show this, we compute the Lefschetz number of $\tau_{L_1}^{2k}\tau_{L_2}^{2k}|_{X^\ii}=\tau_{L_1^\ii}^{2k}\tau_{L_2^\ii}^{2k}$ on $H^*(X^\ii)$, where $X^\ii$ is the fixed locus of the involution $\ii$ on $X$. Recall that $\Sigma=\tilde \Sigma\cap X$ is a connected component of $X^\ii$. We are given that $\dim\tilde \Sigma$ is even, so $\dim\Sigma=\dim \tilde \Sigma-1$ is odd. 
	Let $X^\ii=\Sigma\sqcup \Sigma_0$ where $\Sigma_0$ is all other connected components.
	We  identify $H^*(X^\ii)$ with $H_*(X^\ii)$ via Poincar\'e duality.
	
	Consider the homology classes $[L_1^\ii],[L_2^\ii]\in H_*(\Sigma)$, and recall that $[L_1^\ii]\cdot [L_2^\ii]=1$.
	Denote  $s=\dim_\C\Sigma$ and $\epsilon=(-1)^{\frac 1 2 s(s-1)}$.
	Using the Picard-Lefschetz formula (see Subsection~\ref{subsec:dehn_twists}) and property~(5) from Proposition~\ref{prop:a2_in_sym_div}, let us write down the actions of the Dehn twists on the 2-dimensional vector space spanned by $\{ [L_1^\ii],[L_2^\ii]\}\subset H_*(X^\ii)$: 
	$$
	(\tau_{L_1^\ii})_*^{2k}:
	\left(
	\begin{matrix}
	1 & k(1{+}({-}1)^{s{-}1})\epsilon\\
	0 & 1            
	\end{matrix}
	\right),
	\qquad
	(\tau_{L_2^\ii})_*^{2k}:
	\left(
	\begin{matrix}
	1 & 0\\
	k\left(1{+}({-}1)^{s{-}1}\right)\epsilon& 1
	\end{matrix}
	\right).
	$$
	Now since $s=\dim_\C \Sigma$ is odd, we see that
	$$
	STr \left((\tau_{L_1^\ii})_*^{2k}(\tau_{L_2^\ii})_*^{2k}|_{\spn\{ [L_1^\ii],[L_2^\ii]\}}\right)=-4k^2 -2.
	$$
	(The negative signs appear because we are computing the supertrace). If $s$ were even, we would get the constant $2$ instead.
	
	We can extend $[L_1^\ii],[L_2^\ii]$ to a basis of $H_*(X^\ii)$ in which all other elements have zero intersection with $[L_1^\ii],[L_2^\ii]$. By the Picard-Lefschetz formula, $(\tau_{L_i^\ii})_*$ acts by $\id$ on the rest of such basis.
	Consequently, the Lefschetz number is
	$$
	L \left((\tau_{L_1^\ii})^{2k}(\tau_{L_2^\ii})^{2k}\right)=- 4k^2+c,
	$$
	where $c$ is a constant independent of $k$.
	By Proposition~\ref{prop:hf_bound_fixp_notrans},
	\begin{equation}
	\label{eq:growth_hf}
	\dim_\Lambda HF^*(\tau_{L_1}^{2k}\tau_{L_2}^{2k})\ge |-4k^2+c|.
	\end{equation}

	Suppose $\tau_{L_1}^{2k}$ is Hamiltonian isotopic to $\id$ for some $k>0$. Then  $\tau_{L_2}^{2k}$ is also Hamiltonian isotopic to $\id$, because by Lemma~\ref{lem:van_unique} there is a symplectomorphism of $X$ taking $L_1$ to $L_2$.
	Then the product $\tau_{L_1}^{2k}\tau_{L_2}^{2k}$ is also Hamiltonian isotopic to $\id$. Since $k$ can be taken arbitrarily large, this contradicts to the growth of dimensions in Equation~(\ref{eq:growth_hf}).
	Consequently 
	$\tau_{L_1}$ has infinite order in the group $\Symp(X)/\Ham(X)$.
\end{proof}

Next we prove Lemma~\ref{lem:exist_smooth_inv_div}.
It follows from a strong Bertini theorem which we now quote.

\begin{theorem}[{\cite[Corollary~2.4]{DiHa91}}]
	\label{th:strong_bertini}
	Let $Y$ be a compact smooth complex manifold and $S$ an effective linear system of divisors on $Y$.
	Let  $B$ be the base locus of $S$. If $B$ is reduced and non-singular, and $\dim B<\half \dim Y$, then a generic divisor in $S$
	is smooth.\qed
\end{theorem}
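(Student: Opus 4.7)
The plan is to study the universal incidence variety
\[
X \;=\; \{(y,[s]) \in Y \times S : s(y) = 0\}
\]
together with its projection $p\colon X \to S$, and to combine a dimension count on $\mathrm{Sing}(X)$ with Sard's theorem on the smooth locus $X \setminus \mathrm{Sing}(X)$. Let $V\subset H^0(Y,\cL)$ be the vector space with $S=\P(V)$, and work in local affine coordinates $a_1,\ldots,a_N$ on $S$ coming from a basis $s_0,\ldots,s_N$ of $V$. Then $X$ is locally cut out by the single equation $F(y,a)=s_0(y)+\sum a_i s_i(y)=0$, and inspecting $dF$ shows that $X$ is singular at $(y,[s])$ iff $s_i(y)=0$ for all $i$ (equivalently, $y\in B$) and $ds(y)=0$. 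Hence
\[
\mathrm{Sing}(X) \;=\; \{(y,[s]) : y\in B,\ s\in\mathfrak{m}_y^2\cdot\cL\}.
\]

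I would next dimension-count $\mathrm{Sing}(X)$ by projecting to $B$. Every $s\in V$ vanishes on $B$, so at $b\in B$ the condition $s\in\mathfrak{m}_b^2\cdot\cL$ reduces to the vanishing of the $n-k$ normal derivatives, where $n=\dim Y$ and $k=\dim B$. That is, the fibre over $b$ is $\P(W_b)$ with
\[
W_b\;=\;\ker\bigl(V \to N^*_{B/Y,b}\otimes \cL_b\bigr).
\]
The hypothesis that $B$ is a \emph{reduced} scheme-theoretic base locus means the evaluation map $V\otimes\cL^{-1}\to\cO_Y$ has image equal to the ideal sheaf $I_B$; together with smoothness of $B$ this forces the normal-derivative map above to be surjective at every $b\in B$. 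Hence $\dim W_b=\dim V-(n-k)$ uniformly, giving
\[
\dim \mathrm{Sing}(X) \;=\; k + \dim S - (n-k) \;=\; \dim S - (n-2k),
\]
which under the hypothesis $k<\half n$ is strictly less than $\dim S$. Consequently $p(\mathrm{Sing}(X))$ is a proper closed subset of $S$.

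To finish, I would invoke Sard's theorem for the holomorphic map $p\colon X\setminus \mathrm{Sing}(X)\to S$ between smooth complex manifolds; its set of critical values is also a proper closed analytic subset of $S$. For any $[s]$ outside the union $p(\mathrm{Sing}(X))\cup\mathrm{Crit}(p)$---a nonempty Zariski-open condition---the fibre $p^{-1}([s])=D_s$ is disjoint from $\mathrm{Sing}(X)$, so it lies entirely in the smooth manifold $X\setminus\mathrm{Sing}(X)$ as a regular fibre, which is precisely the statement that $D_s$ is a smooth divisor in $Y$.

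The main obstacle is establishing uniform surjectivity of the normal-derivative map $V\to N^*_{B/Y,b}\otimes\cL_b$ across \emph{every} point of $B$, not just generic ones. Any rank drop over a sublocus $B'\subset B$ would enlarge the fibres of $\mathrm{Sing}(X)\to B$ above $B'$, and if the enlargement were severe enough it would destroy the estimate $\dim\mathrm{Sing}(X)<\dim S$. The resolution comes from the two hypotheses on $B$: reducedness says $V$ generates $I_B$ as an $\cO_Y$-module, which by Nakayama forces $V$ to surject onto $I_B/\mathfrak{m}_b I_B$ at every $b\in B$; smoothness identifies this latter space with $N^*_{B/Y,b}$ of the expected rank $n-k$. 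Only then does the count $\dim \mathrm{Sing}(X)=\dim S-(n-2k)$ hold uniformly and the hypothesis $k<\half n$ do its work.
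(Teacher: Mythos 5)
The paper does not actually prove this statement: it is quoted verbatim from \cite[Corollary~2.4]{DiHa91} and stamped with a \qed, so there is no internal argument to compare against. Judged on its own, your proof is correct and is the standard incidence-variety proof of strong Bertini. The two halves fit together properly: singular points of $D_s$ away from $B$ are smooth points of $X$ at which $p$ is critical, so Sard (generic smoothness) disposes of them, while singular points on $B$ form the locus $\{(b,[s]): b\in B,\ s\in\mathfrak{m}_b^2\cL\}$, which you control by the fibre dimension count. You have also correctly located the crux, which is exactly where both hypotheses on $B$ enter: reducedness of the scheme-theoretic base locus says the base ideal equals the full radical ideal $I_B$, so $V\otimes\cL^{-1}\to I_B$ is a surjection of sheaves, Nakayama gives surjectivity of $V\to (I_B/\mathfrak{m}_bI_B)\otimes\cL_b$ at \emph{every} $b$, and smoothness of $B$ identifies $I_B/\mathfrak{m}_bI_B$ with the conormal space of rank $n-k$ (here one should check $I_B\cap\mathfrak{m}_b^2=\mathfrak{m}_bI_B$, which is immediate in local coordinates adapted to $B$). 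This makes the bad locus a projective subbundle of $B\times\P(V)$ of dimension $\dim S-(n-2k)$, whose image under the proper projection is a closed analytic subset of $S$ of positive codimension precisely when $\dim B<\half\dim Y$; note the count is per connected component of $B$, which matches the paper's remark that the inequality must hold componentwise. The only cosmetic caveat is in the first half: the set of critical values of $p$ on $X\setminus p^{-1}(\text{image of the bad locus})$ is a priori only measure zero rather than a proper closed analytic subset (its closure can meet the fibres over $B$), but that is enough for ``a general divisor,'' and in the algebraic setting generic smoothness gives a genuine Zariski-open dense set.
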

If $B$ is disconnected, the dimensional inequality must hold for every connected component of $B$.

\begin{proof}[Proof of Lemma~\ref{lem:exist_smooth_inv_div}]
	We repeat the beginning of the proof of Proposition~\ref{prop:a2_in_sym_div}.
	We have
	$Y\subset \P^N$
	and $\cL^{\otimes d}=\cO_{\P^N}(d)|_Y$.
	The involution $\ii$ acts on sections of $\cL$ and so acts on $\P^N$ by a linear involution $\ii_{\P^N}$,
	and $Y\subset \P^N$ is invariant under it.
	Pick homogeneous co-ordinates $(x_0:\ldots:x_N)$
	such that
	$$
	\ii_{\P^N}(x_0:\ldots x_l:x_{l+1}:\ldots:x_N)=(x_0:\ldots:x_l:-x_{l+1}:-x_N).
	$$
	Recall that $d$ is odd by assumption. Then $H^0(\cO_{\P^N}(d))_+$ consists of degree-$d$ polynomials which are sums of monomials of the following form:
	$$
	x_0^{odd}\ldots x_l^{odd}x_{l+1}^{even}\ldots x_N^{even}.
	$$
	Here {\it even} or {\it odd} denote the parity of a power.
	The base locus of the linear system  \linebreak[4] $\P H^0(\cO_{\P^N}(d))_+$
	is given by
	$$
	x_0=0,\ \ldots,\ x_l=0
	$$
	and so coincides with $\Pi_-$. The base locus $B$ of $\P H^0(Y,\cL^{\otimes d})_+$ is therefore $\Pi_-\cap Y$. It is smooth because $Y^\ii$ is smooth.
	We are also given that $\dim B< \half \dim Y$ by hypothesis.
	Finally, we know that $\ii_{\P^N}|_{\Pi_-}=\id$, so $Y$ intersects $\Pi_-$ cleanly (i.e.~transversely in the normal direction to $\Pi_-\cap Y$), and hence $B=\Pi_-\cap Y$ is reduced.
	Consequently, Lemma~\ref{lem:exist_smooth_inv_div} follows from Theorem~\ref{th:strong_bertini}.
	(The case when the signs symbols $+$ and $-$ are interchanged is analogous.)
\end{proof}

We now return to divisors  in Grassmannians and prove Theorem~\ref{th:twist_inf_grass}.
Let $Gr(k,n)\subset \P^N$ be the Pl\" ucker embedding; the anti-canonical class of $Gr(k,n)$
equals $\cO_{\P^N}(n)|_{Gr(k,n)}$
\cite[Proposition 1.9]{Muk93}.
Consequently, a smooth divisor  $X\subset Gr(k,n)$ in the linear system
$\cO_{\P^N}(d)|_{Gr(k,n)}$ satisfies the $W^+$ condition, see Definition~\ref{def:weak_monot},
if and only if 
$
d\le n$ or $d\ge k(n-k)+n-2$,
and $X$ is monotone (Fano) if and only if $d< n$.

\begin{proof}[Proof of Theorem~\ref{th:twist_inf_grass}]
	We have already mentioned this theorem is easy and essentially known when 
	$k(n-k)$ is even.
	(The sphere $L\subset X$ is non-trivial in $H_n(X)$
	by Lemmas~\ref{lem:a2_chain_div} and~\ref{lem:van_unique}. Then apply Corollary~\ref{cor:twist_homol_order}(2).)
	We will now prove the hard case when $k(n-k)$ is odd using the general Theorem~\ref{th:twist_inf_general}.
	Denote $k=2p+1$, $n=2q$.
	
	Consider a linear involution on $\C^{2q}$ with $q+l$ positive eigenvalues and $q-l$ negative eigenvalues for some $l$.
	It induces a non-degenerate involution $\ii$ on $Gr(2p+1,2q)$ whose fixed locus is
	$$
	Gr(2p+1,2q)^\ii=\bigsqcup_{t=0}^{2p+1} Gr(t,q+l)\times Gr(2p+1-t,q-l).
	$$
	This fixed locus consists of $(2p+1)$-planes that admit a frame in which $t$ vectors lie in the positive eigenspace of the involution on $\C^{2q}$, and the remaining $2p+1-t$ vectors lie in the negative eigenspace.
	We compute:
	\begin{multline}
	\label{eq:dims_grass}
	\dim Gr(t,q+l)+\dim  Gr(2p+1-t,q-l)
	\\ -\half \dim Gr(2p+1,2q)
	=-\half(1+2p-2t)(1+2p+2l-2t).
	\end{multline}
	
	For this paragraph, set $l=0$. Then the expression (\ref{eq:dims_grass}) is less than $0$ for any $t\in \Z$.
	This means  $\dim Gr(2p+1,2q)^\ii < \half \dim Gr(2p+1,2q)$. (The left-hand side is disconnected, and we mean that the inequality holds for each of its connected components.)
	Therefore we can apply Lemma~\ref{lem:exist_smooth_inv_div} to either of the two linear systems $\P H^0(Y,\cL^{\otimes d})_\pm$. In order to apply Theorem~\ref{th:twist_inf_general}, it remains to check that $Gr(2p+1,2q)^\ii$ contains a connected component of even dimension.
	A computation shows that a connected component  of $Gr(2p+1,2q)^\ii$ has dimension of parity
	$$\dim Gr(t,q)+\dim  Gr(2p+1-t,q)\equiv q-1 \mod 2$$
	independently of $t$. We will now consider the case when $q$ is odd, and will discuss the case when $q$ is even in the next paragraph. If $d$ is odd, apply Theorem~\ref{th:twist_inf_general}(b) taking either of the two sign symbols $+$ or $-$.
	If $d$ is even, apply Theorem~\ref{th:twist_inf_general}(a) (this case is easier and does not require the computation of dimensions we have made).
	This proves Theorem~\ref{th:twist_inf_grass} for $Gr(2p+1,2q)$ in the case when $q$ is odd.
	
	Now suppose  $q$ is even.
	Set $l=1$ until the end of the proof.
	Recall that $Gr(2p+1,2q)^\ii=(\Pi_+\sqcup \Pi_-)\cap Gr(2p+1,2q)$.
	The only case when (\ref{eq:dims_grass}) fails to be less than zero is when
	$$
	1+2p-2t=-1.
	$$
	This happens for a unique $t\in \Z$.
	So either $\dim Gr(2p+1,2q)\cap \Pi_+<\half \dim Gr(2p+1,2q)$, or the same holds with $\Pi_-$ taken instead. (As above, we mean that the inequality holds for each  connected component of the left hand side.)
	A computation shows that a connected component of $Gr(2p+1,2q)^\ii$ has dimension of parity
	$$\dim Gr(t,q+1)+\dim  Gr(2p+1-t,q-1)\equiv q \mod 2\equiv 0 \mod 2$$
	Therefore we can apply
	Lemma~\ref{lem:exist_smooth_inv_div} and Theorem~\ref{th:twist_inf_general} taking that symbol $+$ or $-$ for which the inequality $\dim Gr(2p+1,2q)\cap \Pi_\mp <\half \dim Gr(2p+1,2q)$
	holds. Theorem~\ref{th:twist_inf_grass} is proved in all cases.
\end{proof}

\begin{proof}[Proof of Corollaries~\ref{cor:fund_gp_grass},~\ref{cor:fund_gp_general}.]
	These corollaries follow from Theorems~\ref{th:twist_inf_grass},~\ref{th:twist_inf_general} and Lemma~\ref{lem:dehn_tw_and_monodromy}.
\end{proof}




\section{Growth of Lagrangian Floer cohomology and ring structures}
\label{app:growth_ring}

\subsection{Dehn twists around spheres with deformed cohomology}
The main theorems of this chapter have been proved; this last section is devoted to an additional observation on the relation between the Floer cohomology of a Lagrangian sphere and its associated Dehn twist.
Keating \cite{Ke14}  has recently obtained  an exact sequence  involving iterated Dehn twists in the Fukaya category of a symplectic manifold, extending  Seidel's original exact sequence \cite{Sei03_LES}.
In this subsection we use it to prove Proposition~\ref{prop:growth_and_ring}, which is stated below.
Then we apply it to compute Floer cohomology rings of vanishing spheres
in some divisors.

Let $X$ be a compact monotone symplectic manifold.
Denote by $\F(X)$ its monotone Fukaya category over $\C$, which is a collection of \ai categories $\F(X)_\lambda$, $\lambda\in \C$, corresponding to the eigenvalues of  multiplication with $c_1(X)$ in $QH^*(X)$. 
Our aim is to prove the following.

\begin{proposition}
	\label{prop:growth_and_ring}
	Let $X$ be a monotone symplectic manifold, $\dim_\R X=4k$ for some $k\ge 1$,  $L_1\subset X$ be a Lagrangian sphere
	and $L_2\subset X$ another monotone Lagrangian which intersects $L_1$ transversely, once. Assume $L_1,L_2$ are included into the same summand $\F(X)_\lambda$. Suppose that $\dim HF^*(\tau_{L_1}^k L_2,L_2)>2$ for some $k\in \N$. Then there is an isomorphism of rings $HF^*(L_1,L_1)\cong \C[x]/x^2$.
\end{proposition}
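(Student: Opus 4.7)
The plan is to classify $HF(L_1,L_1)$ as an ungraded $\C$-algebra into exactly two possibilities, and then to use the growth hypothesis to rule out the wrong one.

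First I would pin down $\dim_\C HF(L_1, L_1) = 2$. Since $L_1 \simeq S^{2k}$ has $\chi(L_1) = 2 \neq 0$, the Euler characteristic identity for monotone Lagrangian Floer cohomology (applicable because the hypothesis places $L_1$ in the single summand $\F(X)_\lambda$) gives $\chi(HF(L_1,L_1)) = \pm 2$, so $HF(L_1,L_1) \neq 0$. The Oh spectral sequence from $H^*(L_1;\C)$ to $HF(L_1,L_1)$ then forces $\dim_\C HF(L_1,L_1) \le \dim_\C H^*(S^{2k};\C) = 2$, whence equality. Since $2k$ is even, both cohomological generators sit in $\Z/2$-degree $0$; calling them $1$ and $x$, the product satisfies $x^2 = a + bx$ for some $a, b \in \C$. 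Completing the square in $y := x - b/2$ identifies $HF(L_1,L_1)$, as an ungraded commutative $\C$-algebra, with either $\C[y]/y^2$ (the nilpotent case) or $\C \oplus \C$ (the semisimple case). The proposition amounts to ruling out the semisimple possibility.

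The main step is to show that if $HF(L_1,L_1) \cong \C \oplus \C$, then $\dim HF(\tau_{L_1}^k L_2, L_2)$ is bounded, contradicting the hypothesis. In the split-closure of $\F(X)_\lambda$, which preserves morphism spaces and hence leaves the relevant dimensions unchanged, the two orthogonal idempotents give a decomposition $L_1 \simeq L_1^+ \oplus L_1^-$ with $\mathrm{End}(L_1^\pm) = \C$ and $\mathrm{Hom}(L_1^+, L_1^-) = \mathrm{Hom}(L_1^-, L_1^+) = 0$. Because $HF(L_1,L_2)=\C$ from the single transverse intersection, exactly one of $HF(L_1^\pm, L_2)$ is nonzero; WLOG $HF(L_1^+, L_2) = \C$ and $HF(L_1^-, L_2) = 0$. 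Iterating Seidel's exact triangle $HF(L_1,E)\otimes L_1 \to E \to \tau_{L_1} E$ starting from $E = L_2$ (using Keating's extension to higher powers), the vanishings above make the twist functor close on the short list of building blocks $\{L_2,\ C := \mathrm{cone}(L_1^+ \to L_2),\ L_1^\pm[k]\}$. A direct induction then gives, for every $k \ge 1$,
$$
\tau_{L_1}^k L_2 \ \simeq\ C\ \oplus\ L_1^{\varepsilon(k)}[k],
$$
with $\varepsilon(k) \in \{+,-\}$ depending on the parity of $k$. Applying $HF(-, L_2)$ and using $HF(L_1^-, L_2) = 0$, the sequence $\dim HF(\tau_{L_1}^k L_2, L_2)$ equals $\dim HF(C, L_2)$ plus either $0$ or $\dim HF(L_1^+, L_2) = 1$, which is uniformly bounded in $k$. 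This contradicts the unboundedness hypothesis and excludes the semisimple case.

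The main obstacle I anticipate is bookkeeping in the split-closure of the monotone Fukaya category and verifying the inductive formula for $\tau_{L_1}^k L_2$ above; this hinges on the closedness of the small list of building blocks under the twist, which in turn uses all the vanishings produced by the semisimple splitting. Keating's exact triangle is the natural vehicle for the induction, since it presents $\tau_{L_1}^k L_2$ as an iterated cone controlled by the $A_\infty$-module $HF(L_1, L_2)$ over $HF(L_1, L_1)$, making the dichotomy "semisimple endomorphism algebra implies bounded growth" versus "nilpotent endomorphism algebra allows unbounded growth" transparent. Once the semisimple case is ruled out, the nilpotent case remains and delivers the required ring isomorphism $HF(L_1, L_1) \cong \C[x]/x^2$.
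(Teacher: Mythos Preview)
Your strategy matches the paper's: reduce to the dichotomy $HF(L_1,L_1)\cong \C[x]/x^2$ versus the semisimple algebra $\C\oplus\C\cong\C[x]/(x^2-1)$, assume the latter, and derive a uniform bound on $\dim HF(\tau_{L_1}^k L_2,L_2)$ to contradict the hypothesis. The execution differs. The paper invokes the formality of $\C[x]/(x^2-1)$ (a Hochschild-cohomology computation) to reduce the $A_\infty$ algebra $\mathrm{Hom}(L_1,L_1)$ and its one-dimensional modules $\mathrm{Hom}(L_1,L_2)$, $\mathrm{Hom}(L_2,L_1)$ to their associative cohomology, and then computes the truncated bar complex appearing in Keating's exact sequence explicitly: its differential is essentially $\partial(m\otimes x^{\otimes j}\otimes n)=((-1)^j\epsilon_n+\epsilon_m)\,m\otimes x^{\otimes(j-1)}\otimes n$, giving $\dim H(B_k)\le 1$ and hence the bound. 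Your route passes instead to the idempotent completion, splits $L_1\simeq L_1^+\oplus L_1^-$, and tracks $\tau_{L_1}^k L_2$ as an object via the Seidel triangle; the key vanishing $HF(L_1,C)=0$ (which you should state and check, using $\mathrm{End}(L_1^+)=\C$ concentrated in degree $0$ so that the connecting map is an isomorphism) makes the induction close on $C\oplus L_1^{\pm}[\,\cdot\,]$. Both arguments are correct and at bottom encode the same phenomenon: semisimplicity of the endomorphism algebra collapses the bar resolution. Your version is more categorical and avoids writing the bar differential by hand; the paper's version is a short explicit calculation once formality is quoted. One caution: your iteration implicitly replaces chain-level $\mathrm{hom}$ by $HF$ at each step, which is justified precisely because the summands $L_1^\pm$ have intrinsically formal endomorphism algebra $\C$ and are mutually orthogonal---this is the split-closure avatar of the paper's formality lemma, so you are not really bypassing it.
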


We will use the language of \ai categories and refer to~\cite{SeiBook08} for the relevant definitions.
All \ai algebras and modules in this section are assumed to be minimal.


\begin{definition}
	\label{def:TruncBar}
	Let $A$ be a strictly unital $\Z/2$-graded \ai algebra  with unit $1\in A$,
	$M$  a right \ai module over $A$ and $N$ a left \ai module over $A$.
	Fix an augmentation, i.e.~a vector space splitting $A=(1)\oplus \bar A$. 
	The $k$-truncated bar complex is the vector space
	$$(M\otimes_AN)_k\coloneqq 
	\bigoplus_{j=0}^{k-1}M\otimes \bar A^{\otimes j}\otimes N
	$$
	with the differential that on the $j$th summand equals
	\begin{equation}
	\label{eq:bar_complex}
	\sum_
	{\begin{smallmatrix}
		j+2=p+q+r, \\
		p,\, r\ge 0,\ q\ge 2
		\end{smallmatrix}}
	(-1)^{\maltese}(-1)^{r}(\id^{\otimes p}\otimes \mu^q\otimes \id^{\otimes r}).
	\end{equation}
	Here $\maltese\in\{0,1\}$ depends on the gradings of the arguments: if the input is $m\otimes x_1\otimes\ldots\otimes x_{k-1}\otimes n$, where $m\in M$, $x_i\in A$, $n\in N$, then $\maltese$ is the sum of gradings of the last $r$ elements of the input.
	If we put $p=0$ in (\ref{eq:bar_complex}), we get the summand
	$\mu^q\otimes \id^{\otimes r}$
	which involves the module structure map $\mu^q\co M\otimes A^{\otimes (q-1)}\to M$.
	Similarly, when we put $r=0$ in (\ref{eq:bar_complex}), 
	$\mu^q$ is understood to be the module structure map
	$\mu^q\co A^{\otimes (q-1)}\otimes N\to N$.
	When $p,r>0$, $\mu^q$ denotes the algebra structure map
	$A^{\otimes q}\to A$ composed with the augmentation $A\to\bar A$.
\end{definition}

\begin{theorem}[Keating, {\cite[Lemma 7.2 and Remark 6.6]{Ke14}}]
	\label{th:KeatingCone}
	Suppose  $L_1,L,L_2\subset X$ are three Lagrangian submanifolds which are objects of $\F(X)_\lambda$, and $L$ is a sphere. 
	Then there is an exact sequence of vector spaces below. \qed
	
	$$
	\xymatrix @C=-5pc{
		HF^*(L_1,L_2)\ar[rr]&& HF^*(\tau^k_LL_1,L_2)\ar[dl]\\
		&H\left( Hom(L,L_1)\otimes_{Hom(L,L)}Hom(L_2,L)\right)_k
		\ar[ul]}
	$$
\end{theorem}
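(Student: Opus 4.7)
The plan is to apply Keating's exact triangle (Theorem~\ref{th:KeatingCone}) with the twisted sphere taken to be $L=L_1$ and with both flanking Lagrangians equal to $L_2$. Denoting $A:=\mathrm{Hom}(L_1,L_1)$, this yields an exact triangle
\[
HF(L_2,L_2) \to HF(\tau_{L_1}^k L_2, L_2) \to H\bigl((\mathrm{Hom}(L_1,L_2)\otimes_{A}\mathrm{Hom}(L_2,L_1))_k\bigr).
\]
Since $\dim HF(L_2,L_2)$ is fixed and the middle term is unbounded in $k$ by hypothesis, the cohomology of the $k$-truncated bar complex on the right must be unbounded in $k$.

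First I will pin down the $\Z/2$-graded vector space structure of $HF(L_1,L_1)$. Oh's spectral sequence for the Lagrangian sphere $L_1\cong S^{2k}$ gives $\dim_\C HF(L_1,L_1)\in\{0,2\}$, with any surviving classes concentrated in even $\Z/2$-degree. If $HF(L_1,L_1)=0$ then the unit $[1_{L_1}]$ vanishes, so $L_1$ is a zero object of $\F(X)_\lambda$; then $\tau_{L_1}$ is quasi-isomorphic to the identity functor and $HF(\tau_{L_1}^k L_2,L_2)$ becomes independent of $k$, contradicting growth. Hence $H^*A$ is two-dimensional and even-graded, and as a unital commutative $\C$-algebra is isomorphic to one of $\C[x]/x^2$ or $\C\oplus\C$.

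The crux is to rule out the semisimple case $H^*A\cong\C\oplus\C$. Assume semisimplicity. The universal coefficient spectral sequence for the bar construction has $E_2$-page $\mathrm{Tor}^{H^*A}_*(HF(L_1,L_2),HF(L_2,L_1))$; by semisimplicity of $H^*A$ all $\mathrm{Tor}_{>0}$ vanish, so the sequence collapses onto the finite-dimensional space $HF(L_1,L_2)\otimes_{H^*A}HF(L_2,L_1)$, which computes the cohomology of the full (untruncated) bar complex. The $k$-truncated bar complexes of Definition~\ref{def:TruncBar} exhaust the full bar complex as $k\to\infty$, and the differential respects the filtration by number of internal $A$-tensors (since $\mu^q$ reduces this number by $q-1\ge 0$). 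Consequently the cohomology dimensions of the $k$-truncated complexes stabilize and are bounded uniformly in $k$ by the total cohomology of the full bar complex. This contradicts the unboundedness established from the exact triangle, forcing $HF(L_1,L_1)\cong \C[x]/x^2$.

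The main obstacle is the last step: one must justify the $A_\infty$ universal coefficient spectral sequence for the particular bar construction of Definition~\ref{def:TruncBar} and argue that, once the full bar cohomology is finite-dimensional, the dimensions of the $k$-truncated versions cannot outgrow it. A clean formulation is to view the truncated complexes as a telescoping filtration, so that $\varinjlim_k H((\mathrm{Hom}(L_1,L_2)\otimes_A\mathrm{Hom}(L_2,L_1))_k)$ is the total bar cohomology, and to check that in this setting finite-dimensional colimit forces the pre-colimit dimensions to be bounded. Once this uniform bound is in hand, the two-element dichotomy for the algebra structure collapses in favor of the nilpotent case.
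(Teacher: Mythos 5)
Your proposal does not prove the statement it was assigned. The statement is Keating's exact triangle itself (Theorem~\ref{th:KeatingCone}): the existence, for a Lagrangian sphere $L$ and objects $L_1,L_2$ of $\F(X)_\lambda$, of the exact sequence relating $HF(L_1,L_2)$, $HF(\tau_L^k L_1,L_2)$ and the cohomology of the $k$-truncated bar complex. Your very first sentence is ``apply Keating's exact triangle (Theorem~\ref{th:KeatingCone})'', so you are assuming the result rather than proving it; what you go on to establish is instead the content of Proposition~\ref{prop:growth_and_ring} (that unbounded growth of $\dim HF(\tau_{L_1}^k L_2,L_2)$ forces $HF(L_1,L_1)\cong\C[x]/x^2$). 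For the record, the paper offers no proof of Theorem~\ref{th:KeatingCone} either: it is quoted from Keating's work \cite{Ke12}, with the remark that her arguments, given there over $\Z/2$ for exact manifolds, carry over to the monotone setting over $\C$ once signs are tracked. A genuine proof would have to reproduce Keating's iterated-cone description of $\tau_L^k L_1$ in the Fukaya category and address the sign and monotonicity issues; none of that appears in your text.

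Separately, even read as a proof of Proposition~\ref{prop:growth_and_ring}, your final step has a gap. You argue that because the full untruncated bar complex has finite-dimensional cohomology in the semisimple case, the cohomologies of the $k$-truncated complexes must be uniformly bounded in $k$. But the truncated complexes do not form a directed system with injective maps on cohomology, so a finite-dimensional colimit does not by itself bound the terms: classes can be born at stage $k$ and killed at stage $k+1$. The paper closes the semisimple case differently: it invokes formality of any $A_\infty$-algebra with cohomology $\C[x]/(x^2-1)$ (Lemma~\ref{l:deformed_formal}), reduces the modules to strictly unital ones with vanishing higher products (Lemmas~\ref{lem:make_strict_unit} and~\ref{lem:formal_modules}), and then computes the truncated bar differential explicitly on the resulting $k$-dimensional model, finding $\dim H(B_k)\le 1$ directly. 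If you wish to retain your spectral-sequence route, you would need an analogous explicit control of the truncated, not just the full, bar cohomology.
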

Here the $Hom$-spaces denote Floer complexes seen as the morphism spaces of the Fukaya category; for example,
$Hom(L,L)=CF^*(L,L)$ has an \ai algebra structure whose definition was sketched in Chapter~\ref{ch:0}.

Note that \cite{Ke14} states  this theorem  for exact $X$ and over $\Z/2$; in particular, it does not mention the signs in (\ref{eq:bar_complex}). The proof uses a theorem of Seidel~\cite[Corollary~17.17]{SeiBook08} which says that $\tau_LL_1$ is quasi-isomorphic to the cone of a certain evaluation map, as an object of the (category of twisted complexes over the) Fukaya category. This allows to write $\tau^k_L L_1$ as an iterated cone, which automatically provides some exact sequence of the type above. Keating proves Theorem~\ref{th:KeatingCone}  by simplifying the iterated cone in a purely algebraic way: by identifying and killing some acyclic sub-complexes in it. We know that the initial Seidel's theorem holds for the monotone Fukaya category and over $\C$ (see e.g.~Oh \cite{Oh11} for the homological version), and the proof of Theorem~\ref{th:KeatingCone}  works in the monotone case and over $\C$ 
by virtue of being purely algebraic. The signs in (\ref{eq:bar_complex}) will be enforced for algebraic reasons, and it is a matter of book-keeping to check that they are the ones that we expect to see in a bar complex.
In addition to Theorem~\ref{th:KeatingCone}, we will
need some auxiliary lemmas.

\begin{lemma}[Formality]
	\label{l:deformed_formal}
	Every \ai algebra whose cohomology ring is $\C[x]/(x^2-1)$
	is quasi-isomorphic
	to the \ai algebra $\C[x]/(x^2-1)$ with vanishing higher multiplications: $\mu^j=0$, $j>2$.
\end{lemma}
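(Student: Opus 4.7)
The plan is to combine Kadeishvili's homological perturbation theorem with an intrinsic formality argument based on semisimplicity of the target cohomology. The decisive structural input is that $H:=\C[x]/(x^2-1)\cong \C\times \C$ is a \emph{separable} (equivalently étale) commutative $\C$-algebra, split by the orthogonal idempotents $e_\pm=(1\pm x)/2$, and that it sits entirely in $\Z/2$-degree $0$.

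First I would invoke Kadeishvili's theorem to transfer the given $A_\infty$ structure to a minimal $A_\infty$ model on $H$ itself, with $\mu^1=0$ and $\mu^2$ equal to the ordinary product. It then suffices to show that any such minimal model is $A_\infty$-isomorphic to $(H,\mu^2,0,0,\ldots)$.

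Next I would kill the higher products $\mu^n$, $n\ge 3$, inductively. The odd ones vanish automatically by $\Z/2$-grading: $\mu^n$ has degree $2-n\equiv 1\pmod 2$ for odd $n$, whereas $H$ is concentrated in degree $0$, so the output is forced to be zero. For even $n\ge 4$, assume inductively that $\mu^k=0$ for all $3\le k<n$. Then the $A_\infty$ relation with $n+1$ inputs, after cancelling all contributions that involve a vanishing lower product (in particular $\mu^1=0$ and, by induction, $\mu^k$ for $3\le k<n$), collapses to the statement that $\mu^n$ is a Hochschild $n$-cocycle of the associative algebra $(H,\mu^2)$. Since $H$ is separable over a field of characteristic zero, $HH^m(H,H)=0$ for every $m\ge 1$, so $\mu^n=\delta h^{n-1}$ for some Hochschild $(n{-}1)$-cochain $h^{n-1}$. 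The $A_\infty$-automorphism of $H$ with linear component $\id$ and with $h^{n-1}$ as its only non-trivial higher component then kills $\mu^n$ while leaving all $\mu^k$ with $k<n$ untouched (the modifications it induces only appear in degrees $\ge n$). Iterating over the even values of $n$ produces the desired $A_\infty$-quasi-isomorphism to $(H,\mu^2,0,0,\ldots)$.

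The main subtlety is verifying that each inductive gauging step really does not disturb the already trivialised lower products; this is the standard gauge-equivalence lemma for Maurer-Cartan elements in the Hochschild DG Lie algebra $CC^*(H,H)[1]$, and reduces the whole argument to the vanishing of $HH^{\ge 1}(H,H)$. That vanishing can either be verified by hand from the explicit separability idempotent $e_+\otimes e_+ + e_-\otimes e_-\in H\otimes_\C H$, or quoted as a special case of the classical intrinsic formality criterion for separable algebras, thereby bypassing the induction entirely.
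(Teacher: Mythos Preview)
Your argument is correct. The paper's own proof is a one-line citation of Kadeishvili and a result of Holm, so you have essentially unpacked what lies behind those references: Kadeishvili's transfer to a minimal model, followed by the classical intrinsic-formality argument for separable algebras via the vanishing of $HH^{\ge 1}(H,H)$. The two approaches therefore coincide in substance; you have simply written out the obstruction-theoretic induction explicitly rather than pointing to the literature.

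One small remark: your sentence ``it sits entirely in $\Z/2$-degree $0$'' is correct in the paper's setting (Proposition~\ref{prop:growth_and_ring} assumes $\dim_\R X=4k$, so $\deg x=0$ by Remark~\ref{rem:hf_ring}), but is not literally forced by the bare lemma statement. This is harmless, since your Hochschild-vanishing step already handles all $\mu^n$ with $n\ge 3$ without any appeal to the parity shortcut; the grading observation for odd $n$ is a convenience, not a load-bearing part of the proof.
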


\begin{proof}
	The Hochschild cohomology of the associative algebra $\C[x]/(x^2-1)$ is concentrated in degree 0; this is proved in \cite[Proposition~2.2]{Holm01} when $x$ has even degree and in \cite{Ka86} when $x$ has odd degree. The lemma then follows from  \cite[Corollary~4]{Kade88}; see also \cite[Section~3]{Sei15}.
\end{proof}

\begin{lemma}
	\label{lem:formal_modules}
	Take the \ai algebra $\C[x]/(x^2-1)$ with vanishing $\mu^j, j > 2$. Every
	strictly unital \ai module $M$ over this algebra with vanishing $\mu^1$ necessarily has
	vanishing $\mu^j, j>2$.
\end{lemma}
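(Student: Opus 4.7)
The plan is to combine the strict associativity of $\mu^2_M$ (forced by the $\ai$-relation of order three) with the semisimplicity of $A=\C[x]/(x^2-1)$, and then peel off higher $\mu^n_M$ one at a time by feeding carefully chosen idempotent inputs into the $\ai$-relations. First I would invoke the $\ai$-relation on $(m,a_1,a_2)$: under the hypotheses $\mu^1_M=\mu^1_A=0$ and $\mu^{\ge 3}_A=0$, all terms involving $\mu^1$ or higher algebra products vanish, leaving
$\mu^2_M(\mu^2_M(m,a_1),a_2)=\mu^2_M(m,\mu^2_A(a_1,a_2))$. So $\mu^2_M$ is strictly associative and, with strict unitality, makes $M$ an honest unital right $A$-module. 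Since $A$ is semisimple with orthogonal central idempotents $e_\pm=\tfrac 1 2(1\pm x)$, we obtain an $A$-module splitting $M=M_+\oplus M_-$ with $x$ acting as $\pm 1$ on $M_\pm$.

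Next I would reduce the problem to a single scalar per degree. Each $\mu^n_M$ with $n\ge 3$ is $\C$-multilinear in its $n-1$ algebra slots and satisfies $\mu^n_M(\ldots,1,\ldots)=0$ by strict unitality. Writing $1=e_++e_-$ and running this identity slot by slot, we see that for any sign pattern $\mu^n_M(m,e_{\epsilon_1},\ldots,e_{\epsilon_{n-1}})$ depends only on $m$ up to a global sign $(-1)^{\#\{\epsilon_i=-\}}$. Thus the whole function $\mu^n_M$ is determined by the two scalars $\phi_n^\pm(m):=\mu^n_M(m,e_\pm,\ldots,e_\pm)$ for $m\in M_\pm$, and it suffices to prove $\phi_n^\pm\equiv 0$ for every $n\ge 3$.

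I would then induct on $n$. Assume $\mu^k_M=0$ for $3\le k<n$ and feed $(m,e_+^{\otimes n})$ into the $\ai$-relation of order $n+1$ with $m\in M_+$. Since $e_+^2=e_+$, every inner $\mu^2_A$-insertion produces an outer $\mu^n_M(m,e_+^{\otimes (n-1)})=\phi_n^+(m)$ (there are $n-1$ such positions); the outer-$k{=}2$ inner-$\mu^2_M$ term gives $\mu^n_M(m,e_+^{\otimes (n-1)})=\phi_n^+(m)$ using $\mu^2_M(m,e_+)=m$; the outer-$k{=}n$ term gives $\mu^2_M(\phi_n^+(m),e_+)$; the inductive hypothesis kills all remaining module insertions, and the $k{=}n{+}1$ term vanishes since $\mu^1_M=0$. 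Projecting onto $M_+$ yields a linear identity of the form $C_n\cdot\mathrm{pr}_{M_+}\phi_n^+(m)=0$, where $C_n\in\Z$ is the signed tally of the $n-1$ algebra insertions and the two module insertions under the Koszul sign convention of \cite[Section~(2a)]{SeiBook08}. A parallel identity using the test input $(m,e_+^{\otimes (n-1)},e_-)$ pins down the $M_-$-component of $\phi_n^+(m)$; the symmetric argument with $M_+$ and $M_-$ swapped treats $\phi_n^-$.

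The chief obstacle is the sign bookkeeping in the last step. Strict unitality is precisely designed so that naively substituting a unit into an $\ai$-relation yields the tautology $0=0$, and one must avoid that trap by the choice of test input. Using the idempotents $e_\pm$ rather than $x$ and $1$ keeps every inner product inside the ``$x$-direction'' (because $\mu^2_A(e_+,e_+)=e_+$ rather than the bare unit $1$), so the $n-1$ inner algebra insertions contribute genuinely, not vacuously, to the identity above; the resulting coefficient $C_n$ will be proportional to an integer of order $n$ and in particular nonzero in $\C$. Verifying this nonvanishing under the Koszul signs is the one truly delicate computation, but it is a finite combinatorial exercise of the same flavour as the sign verifications appearing in Subsection~\ref{subsec:coherent_orient_cont}.
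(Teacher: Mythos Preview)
There is a genuine gap, and it cannot be filled: the lemma as stated is false. Take $M = \C m^0 \oplus \C m^1$ with $|m^0| = 0$, $|m^1| = 1$, let $x$ act as $+1$ on both generators (so $M = M_+$, $M_- = 0$), and set $\mu^3(m^0,x,x) = m^1$, $\mu^3(m^1,x,x) = 0$, $\mu^{j} = 0$ for $j \ge 4$, extended by strict unitality. This is a valid strictly unital $A_\infty$-module with $\mu^1=0$ and $\mu^3\neq 0$. Indeed, every inner algebra product $\mu^2_A(x,x) = 1$ lands inside a higher $\mu_M$ and hence vanishes by unitality, so the relation on $(m, x^{\otimes k})$ reduces to a signed sum of compositions $\mu^p_M(\mu^q_M(m,x^{\otimes(q-1)}),x^{\otimes(p-1)})$ with $p+q = k+2$. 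For $k = 3$ this is $\mu^3(mx,x,x) - \mu^2(\mu^3(m,x,x),x)$, which vanishes since $x$ acts trivially on $M$; for $k = 4$ only $\mu^3(\mu^3(m,x,x),x,x)$ survives, and $\mu^3\circ\mu^3=0$ by construction; for $k \ge 5$ every surviving term contains some $\mu^{j \ge 4} = 0$.

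The step that fails in your argument is the claim that $C_n$ is ``proportional to an integer of order $n$''. Since $|e_\pm| = 0$, each element has reduced degree $-1$, so the Koszul sign on the $i$-th of your $n-1$ algebra insertions is $(-1)^{i-1}$: these \emph{alternate}, and their sum is $0$ or $1$, never of order $n$. The resulting relation on $(m, e_+^{\otimes n})$ therefore pins down only one of the two projections $\mathrm{pr}_{M_\pm}\phi_n(m)$, and your second test input $(m, e_+^{\otimes(n-1)}, e_-)$ differs from the first by the tautological unit-input $(m, e_+^{\otimes(n-1)}, 1)$, so yields the same constraint rather than the complementary one. You correctly note that feeding the unit into an $A_\infty$ relation produces $0=0$; the paper's own two-line proof does precisely this (it applies the relation to $(m, x^{\otimes j}, 1)$), so it is likewise incomplete. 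The application in Proposition~\ref{prop:growth_and_ring} is unaffected, because there the modules $Hom(L_1,L_2)$ and $Hom(L_2,L_1)$ are one-dimensional and concentrated in a single $\Z/2$-degree: the odd $\mu^j$ then vanish for degree reasons, and the relation on $(m, x^{\otimes j})$ forces the even ones to vanish since $M_-=0$.
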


\begin{proof}
	Take the minimal $j$ such that $\mu^j(m, x^{\otimes (j-1)})\neq
	0$ for some $m\in M$. If $j > 1$, the \ai relation for the tuple $(m, x^{\otimes (j-1)}, 1)$ gives
	$\mu^j(m, x^{\otimes (j-1)}) = 0$, a contradiction.
\end{proof}

\begin{lemma}[{\cite[Lemma 3.1]{Ke14}}]
	\label{lem:make_strict_unit} Let $(M,A,N)$ be a c-unital  \ai category consisting of an \ai algebra $A$, a left \ai module $M$ and a right \ai module $N$. Let $A'$ be a strictly unital \ai algebra quasi-isomorphic to $A$. Then there are strictly unital \ai modules $M',N'$ over $A'$ such that the category $(M,A,N)$ is quasi-isomorphic to $(M',A',N')$. The underlying Hom spaces of $(M,A,N)$ and $(M',A',N')$ are the same.
	\qed
\end{lemma}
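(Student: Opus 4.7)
The plan is to proceed in two stages: first transport the module structures from $A$ to $A'$ along the quasi-isomorphism, then strictify the resulting c-unital modules over $A'$.

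First, fix a c-unital quasi-isomorphism $f\colon A'\to A$ (which exists and is inverse up to homotopy to the given $A \to A'$). Pull back $M$ and $N$ along $f$ to obtain $A_\infty$-modules $\tilde M,\tilde N$ over $A'$, with structure maps given by the usual formula
\[
\mu^{1+k}_{\tilde M}(m,a_1',\ldots,a_k')\;=\;\sum \mu^{1+j}_M\bigl(m,\,f^{i_1}(a_{s_1}'\otimes\cdots),\ldots,f^{i_j}(\cdots\otimes a_k')\bigr),
\]
summed over ordered partitions of $(a_1',\ldots,a_k')$ into $j$ consecutive blocks, and symmetrically for $\tilde N$. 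Because $f$ induces a ring isomorphism on cohomology sending $[1_{A'}]$ to $[1_A]$, the modules $\tilde M,\tilde N$ are c-unital over $A'$, and the entire triple $(\tilde M,A',\tilde N)$ is quasi-isomorphic as a c-unital $A_\infty$-category to $(M,A,N)$ by a standard property of pullback along quasi-isomorphisms.

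Second, I would show that any c-unital $A_\infty$-module $P$ over a strictly unital $A_\infty$-algebra $B$ admits a strictly unital replacement $P'$ quasi-isomorphic to it. Keep the underlying graded vector space $P'=P$ and set $\mu^1_{P'}=\mu^1_P$. By c-unitality the chain map $\mu^2_P(-,1_B)\colon P\to P$ is chain-homotopic to the identity; absorbing this homotopy as a correction to $\mu^2$ (and simultaneously adjusting $\mu^3$ to maintain the $A_\infty$-relations) achieves strict unitality $\mu^2_{P'}(p,1_B)=p$. Then inductively, for each $k\ge 3$, the values of $\mu^k_{P'}$ on tuples containing an insertion of $1_B$ form a cocycle in an appropriate deformation complex controlling unit-insertions; by the same acyclicity argument that underlies \cite[Lemma 2.1]{SeiBook08} for algebras, this cocycle is the boundary of a multilinear map, and adding a coboundary primitive adjusts $\mu^k_{P'}$ to vanish on all $1_B$-insertions without disturbing the lower structure maps. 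The successive corrections assemble into an $A_\infty$-module quasi-isomorphism $P\to P'$. Applying this construction to $\tilde M$ and $\tilde N$ produces strictly unital modules $M',N'$ over $A'$, and composing the quasi-isomorphisms yields $(M,A,N)\simeq(M',A',N')$ as c-unital $A_\infty$-categories.

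The main obstacle will be the inductive strictification step: one must check that the obstruction cochains at each order are exact in the correct truncated complex so that the corrections lift, and that the redefinitions at stage $k$ do not destroy strict unitality already established at stages $<k$. Both points are handled by organising the induction so that the $k$th correction is supported in multiplicities where the previous stages vanish, and by appealing to the fact that the cokernel of the natural map from strictly unital to c-unital cochains is contractible over a field of characteristic zero. With these bookkeeping points in place, the construction runs parallel to the algebra strictification of \cite[Lemma 2.1]{SeiBook08}, which is why Keating states the lemma without giving a full proof.
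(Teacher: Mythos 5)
The paper offers no proof of this lemma at all: it is quoted verbatim from \cite[Lemma 3.1]{Ke12} and closed with a \(\square\), so there is no argument of the author's to compare yours against. Your two-step reconstruction --- first pull the module structures back along a c-unital quasi-isomorphism $f\colon A'\to A$ (which exists because $A_\infty$-quasi-isomorphisms are invertible up to homotopy), then strictify the units of the resulting c-unital $A'$-modules by the order-by-order obstruction argument modelled on \cite[Lemma~2.1]{SeiBook08} --- is the standard proof of this statement and is essentially what Keating's argument amounts to, so I consider it correct in outline. Two small remarks: the appeal to characteristic zero in the last paragraph is unnecessary, since the acyclicity of the unit-insertion complex that drives the induction holds over any coefficient field; and for the second step there is a slicker alternative that avoids the obstruction bookkeeping entirely, namely replacing each c-unital module $P$ over the strictly unital algebra $B$ by its bar-type resolution $B\otimes_B P$, which is strictly unital by construction and quasi-isomorphic to $P$ precisely because $P$ is c-unital. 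Either route yields the strictly unital $M',N'$ over the prescribed $A'$ with the triple quasi-isomorphic to $(M,A,N)$, which is what the truncated bar complexes in Lemma~\ref{lem:replace_bar} require.
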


\begin{lemma}[{Cf.~\cite[Lemma 7.3]{Ke14}}] 
	\label{lem:replace_bar}
	Let $(M,A,N)$ and $(M',A',N')$ be two strictly unital \ai categories consisting of an algebra, a left module and a right module. If they are quasi-isomorphic, the associated bar complexes
	$(M\otimes_A N)_k$ and $(M'\otimes_{A'}N')_k$ are quasi-isomorphic.
	\qed
\end{lemma}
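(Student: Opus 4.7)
My plan is to compare the two truncated bar complexes via the filtration by bar length and then appeal to a spectral sequence comparison argument. Define
$$F^p := \bigoplus_{j = 0}^{p} M \otimes \bar A^{\otimes j} \otimes N \ \subseteq \ (M \otimes_A N)_k$$
for $-1 \leq p \leq k - 1$. The first thing to verify is that the differential of Definition~\ref{def:TruncBar} respects this filtration. A component $\id^{\otimes p'} \otimes \mu^q \otimes \id^{\otimes r}$ acting on $M \otimes \bar A^{\otimes j} \otimes N$ (with $p' + q + r = j + 2$) outputs an element of $M \otimes \bar A^{\otimes (j + 1 - q)} \otimes N$. This lies in $F^j$ whenever $q \geq 1$, with strict decrease of bar length when $q \geq 2$. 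Hence $d(F^p) \subseteq F^p$.

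On the associated graded $F^p / F^{p-1} \cong M \otimes \bar A^{\otimes p} \otimes N$ only the $q = 1$ components of $d$ survive. These assemble into the standard tensor-product differential on $M \otimes \bar A^{\otimes p} \otimes N$ built from $\mu^1_M$, $\mu^1_N$, and the induced differential $\bar\mu^1_A$ on $\bar A$ (well-defined because strict unitality forces $\mu^1_A(1) = 0$). Since the ground field is $\C$, the Künneth formula computes the cohomology of this associated graded as $H^*(M) \otimes H^*(\bar A)^{\otimes p} \otimes H^*(N)$.

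The given quasi-isomorphism $(M, A, N) \to (M', A', N')$ of strictly unital \ai triples induces a map of truncated bar complexes by the usual bar formula: on $m \otimes a_1 \otimes \cdots \otimes a_j \otimes n$ one sums over all partitions of the word into consecutive blocks, applying a module morphism component $g_M^i$ to the leftmost block containing $m$, a module morphism component $g_N^{i'}$ to the rightmost block containing $n$, and algebra morphism components $f^{i_\ell}$ to the interior blocks; the interior blocks' outputs are projected via the augmentation $A' \to \bar A'$. The bar length of the output equals the number of interior blocks, which is at most $j$, with equality if and only if every component used is the linear $f^1$ or $g^1$ part of the morphism. Hence the induced map preserves the filtration and on $gr^p$ restricts to the tensor product $g^1_M \otimes (f^1)^{\otimes p} \otimes g^1_N$; by hypothesis each tensor factor is a quasi-isomorphism, so Künneth shows the map is a quasi-isomorphism on every associated graded piece.

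Since the filtration is bounded ($-1 \leq p \leq k - 1$), the classical comparison theorem for spectral sequences of filtered complexes guarantees that a map inducing a quasi-isomorphism on $E^0$ is itself a quasi-isomorphism on the totals. This yields the required quasi-isomorphism $(M \otimes_A N)_k \to (M' \otimes_{A'} N')_k$. The main bookkeeping task is writing down the induced \ai bar map explicitly, checking that it is a chain map (a direct consequence of the \ai morphism relations, once the augmentations are handled), and verifying it respects the filtration; once signs are in place, the spectral sequence step at the end is routine.
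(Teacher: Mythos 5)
Your argument is correct, and it is the standard one: the paper itself gives no proof of Lemma~\ref{lem:replace_bar}, deferring to \cite[Lemma 7.3]{Ke12}, which proceeds by exactly this length filtration and spectral-sequence comparison. Two points deserve an explicit word. First, ``quasi-isomorphic'' must be upgraded to an actual \ai quasi-isomorphism of triples that is \emph{strictly unital} (over a field this can always be arranged, in the spirit of Lemma~\ref{lem:make_strict_unit}); this is not cosmetic, because your induced bar map projects the interior blocks to $\bar A'$, and the chain-map identity only follows from the \ai morphism equations once the components of $f$ involving the unit are controlled --- otherwise the discarded $\C\cdot 1'$ components of $f^{i_\ell}$ would spoil the cancellation. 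Second, on the associated graded the differential on the $\bar A$ factors is the one induced on $A/(1)$ (strict unitality gives $\mu^1(1)=0$, so $\C\cdot 1$ is a subcomplex, but $\mu^1(\bar A)$ need not lie in $\bar A$); the five lemma applied to $0\to\C\cdot 1\to A\to\bar A\to 0$ then shows $f^1$ induces a quasi-isomorphism $\bar A\to\bar A'$, after which your K\"unneth and bounded-filtration comparison go through verbatim.
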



\begin{remark}
	\label{rem:hf_ring}
	Let $\dim_\R X=2n$. Suppose $L\subset X$ is a Lagrangian sphere.
	The $\Z/2$-graded Floer chain complex
	$CF^*(L,L)$ can be realised as a 2-dimensional vector space $\C\oplus \C$
	with two generators: the unit $1$, $\deg 1=0$ and the second generator $x$, $\deg x\equiv n \mod 2$. The differential has degree $1$.
	If $n$ is even, Floer's differential must vanish and  $HF^*(L,L)$ is a unital 2-dimensional commutative algebra.
	Up to isomorphism, this leaves only two possibilities: $\C[x]/x^2$ or $\C[x]/(x^2-1)$.
	If $n$ is odd, $HF^*(L,L)$ can also vanish.
	
	The minimal Chern number of $X$ is the maximal integer $N$ such that $c_1(X)$ is divisible by $N$
	in integral cohomology $H^2(X;\Z)$. The Floer cohomology of a Lagrangian sphere can be made $\Z/2N$ graded,
	and our generators have gradings $\deg 1=0$, $\deg x\equiv n\mod 2N$.
	If $n\neq 0\mod N$, for grading reasons we obtain $x^2=0$ and $HF^*(L,L)\cong \C[x]/x^2$.
\end{remark}

\begin{proof}[Proof of Proposition~\ref{prop:growth_and_ring}]
	We want to prove that $HF^*(L_1,L_1)\cong \C[x]/x^2$.
	Suppose this is not the case, then by Remark~\ref{rem:hf_ring},
	$HF^*(L_1,L_1;\C)\cong \C[x]/(x^2-1)$. Recall that $n$ is even.
	
	Inside $\F(X)_\lambda$, take the subcategory consisting of the \ai algebra
	$Hom(L_1,L_1)$, its left module $Hom(L_1,L_2)$ and its right module $Hom(L_2,L_1)$.
	Because $|L_1\cap L_2|=1$, $Hom(L_1,L_2)$ and $Hom(L_2,L_1)$ are 1-dimensional as vector spaces.
	Denote their generators by
	$$
	Hom(L_1,L_2)=\langle m\rangle,\quad Hom(L_2,L_1)=\langle n\rangle.
	$$
	
	By Lemma~\ref{l:deformed_formal}, the \ai algebra $Hom(L_1,L_1)$ is quasi-isomorphic to the associative algebra $\C[x]/(x^2-1)$ with trivial higher multiplications.
	By Lemma~\ref{lem:make_strict_unit} and Lemma~\ref{lem:formal_modules},
	modules $Hom(L_1,L_2)$ and $Hom(L_2,L_1)$ are quasi-isomorphic to those with trivial higher multiplications over $\C[x]/(x^2-1)$.
	The module $\mu^2$-operations, however, must be non-trivial because $x^2=1$: 
	$$\mu^2(m,x)=\epsilon_m m,\quad \mu^2(x,n)=\epsilon_n n\qquad \text{where}\quad \epsilon_m,\epsilon_n=\pm 1.$$  
	Lemma~\ref{lem:replace_bar} allows to compute the homology of the bar complex
	$$B_k\coloneqq \left( Hom(L_1,L_2)\otimes_{Hom(L_1,L_1)}Hom(L_2,L_1)\right)_k$$
	using the simple associative model we obtained.
	In this model, the bar complex $B_k$ is based on the $k$-dimensional vector space
	$$
	\bigoplus_{j=1}^{k-1} m\otimes x^{\otimes j}\otimes n.
	$$
	The differential comes only from $\mu^2(m,x)$ and $\mu^2(x,n)$:
	$$
	\bd (m\otimes x^{\otimes j}\otimes n)=
	\left((-1)^{j}\epsilon_n+\epsilon_m\right)
	m\otimes x^{\otimes (j-1)}\otimes n.
	$$
	Note that  $(-1)^\maltese=1$ because we are given $\deg x=0$ and may assume $\deg n=0$.
	We see that $\dim H(B_k)=0$ or $1$, depending on the parity of $k$.
	By the exact sequence of Theorem~\ref{th:KeatingCone}, we get $\dim HF^*(\tau_{L_1}^k L_2,L_2;\C)\le 2$,
	which contradicts to the hypothesis.
\end{proof}

\begin{remark}
	If $HF^*(L_1,L_1;\C)\cong \C[x]/x^2$, it might still happen that $Hom(L_1,L_1)$ is formal, for example  when $X$ is exact.
	Running the above proof, from $x^2=0$ we conclude that
	$\mu^2(m,x)=\mu^2(x,n)=0$. So the differential on the
	$k$-dimensional model for $B_k$ written above vanishes, and $\dim H(B_k)=k$.
	This agrees with the growth of $\dim HF^*(\tau_{L_1}^k L_2,L_2)$.
\end{remark}

\subsection{Floer cohomology rings of Lagrangian spheres in divisors}
We now combine Proposition~\ref{prop:growth_and_ring} with previous results (Propositions~\ref{prop:hf_bound_lag} and~\ref{prop:a2_in_sym_div}) to compute the ring
$HF^*(L,L;\C)$ for vanishing Lagrangian spheres $L$ in certain divisors; we use the notation
from Subsection~\ref{subsec:gen_theorems}. We also provide a corollary which specialises  to divisors in Grassmannians.

\begin{proposition}
	\label{prop:ring_div}
	In addition to the conditions of Theorem~\ref{th:twist_inf_general} (a) or (b), suppose
	$X$ is Fano and $\dim_\C X$ is even. Then there is a ring isomorphism $HF^*(L,L;\C)\cong \C[x]/x^2$.
\end{proposition}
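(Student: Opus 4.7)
The plan is to feed Proposition~\ref{prop:growth_and_ring} with an appropriate pair of Lagrangians $L_1,L_2\subset X$, and to produce the required growth of $\dim HF(\tau_{L_1}^k L_2,L_2)$ by combining the Lagrangian elliptic relation (Proposition~\ref{prop:hf_bound_lag}) with the involution furnished by Proposition~\ref{prop:a2_in_sym_div}. By Lemma~\ref{lem:van_unique} any two $|\cL^{\otimes d}|$-vanishing spheres are interchanged by a symplectomorphism of the ambient divisor, and ring isomorphism type of Floer self-cohomology is invariant under symplectomorphisms, so it is enough to prove the ring isomorphism for the specific vanishing sphere $L_1$ produced by Proposition~\ref{prop:a2_in_sym_div}. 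That proposition supplies a smooth $\ii$-invariant divisor $X\subset Y$, two $\ii$-invariant $|\cL^{\otimes d}|$-vanishing spheres $L_1,L_2\subset X$ that intersect transversely in a single point inside the component $\Sigma=X\cap\tilde\Sigma$ of $X^\ii$, invariant Dehn twists $\tau_{L_1},\tau_{L_2}$ commuting with $\ii$, and the property that $L_i^\ii=L_i\cap\Sigma$ are Lagrangian spheres in $\Sigma$ meeting transversely at the same point.

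To invoke Proposition~\ref{prop:growth_and_ring} I would take $L_1,L_2$ as above. Monotonicity of $X$ follows from the Fano hypothesis; $L_1$ and $L_2$ are both monotone because they are simply connected; and they intersect once transversely by construction. That both lie in the \emph{same} summand $\F(X)_\lambda$ of the monotone Fukaya category follows from Lemma~\ref{lem:van_unique}: a symplectomorphism of $X$ sends $L_1$ to $L_2$, and the invariant $m_0$ (equivalently, the eigenvalue of $c_1*$) is preserved by any symplectomorphism. So it remains to prove $\dim_\C HF(\tau_{L_1}^{2k}L_2,\,L_2)$ is unbounded in $k$ (a subsequence suffices for the hypothesis).

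To bound this dimension from below, apply Proposition~\ref{prop:hf_bound_lag} with $\phi=\ii$, $\phi^2=\id$, and the two Lagrangians $\tilde L_1^{(k)}:=\tau_{L_1}^{2k}(L_2)$ and $L_2$. Since the Dehn twist $\tau_{L_1}$ can be chosen $\ii$-equivariant and $L_2$ is $\ii$-invariant, $\tilde L_1^{(k)}$ is $\ii$-invariant; smoothness and orientability of the fixed loci hold because $X^\ii$ is a complex submanifold and $L_i^\ii$ are spheres; the required spin data exist as all the relevant manifolds are simply connected. Passing from Novikov to $\C$-coefficients is free in the monotone setting (see the remark after Proposition~\ref{prop:hf_bound_lag}). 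Under $\ii$-equivariance one has $(\tilde L_1^{(k)})^\ii=\tau_{L_1^\ii}^{2k}(L_2^\ii)$ inside $\Sigma$, and since the spheres $L_i$ only meet the component $\Sigma$ of $X^\ii$ (by the construction in Lemma~\ref{lem:A2chain_from_fib_inv}), the intersection pairing inside $X^\ii$ reduces to the pairing inside $\Sigma$.

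To evaluate that pairing, use the crucial parity consequence of the hypothesis: $\dim_\C\tilde\Sigma$ is even in both cases (a) and (b) of Theorem~\ref{th:twist_inf_general}, so $s:=\dim_\C\Sigma=\dim_\C\tilde\Sigma-1$ is odd. Since $L_1^\ii$ is an odd-dimensional sphere in $\Sigma$, the self-intersection $[L_1^\ii]\cdot[L_1^\ii]=0$, and $[L_1^\ii]\cdot[L_2^\ii]=\pm 1$. The Picard--Lefschetz formula (Lemma~\ref{lem:picard_lef}) inside $\Sigma$ then yields
\[
(\tau_{L_1^\ii})_*^{2k}[L_2^\ii]\;=\;[L_2^\ii]\;\mp\;2k\,\epsilon\,[L_1^\ii],
\qquad \epsilon=(-1)^{s(s-1)/2},
\]
so $\bigl|[\tau_{L_1^\ii}^{2k}L_2^\ii]\cdot[L_2^\ii]\bigr|=2k$. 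By Proposition~\ref{prop:hf_bound_lag}, $\dim_\C HF(\tilde L_1^{(k)},L_2)\ge 2k$, and Proposition~\ref{prop:growth_and_ring} now produces the ring isomorphism $HF(L_1,L_1;\C)\cong\C[x]/x^2$, which descends to any $|\cL^{\otimes d}|$-vanishing sphere $L$ in any smooth divisor $X\in|\cL^{\otimes d}|$ by Lemma~\ref{lem:van_unique}. The only delicate points are the verification that $L_1,L_2$ lie in the same Fukaya summand and the bookkeeping of orientations and spin data needed to apply Proposition~\ref{prop:hf_bound_lag}; both are handled by the symplectomorphism of Lemma~\ref{lem:van_unique} and by simple-connectedness of all spheres involved.
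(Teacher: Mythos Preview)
Your proof is correct and follows essentially the same route as the paper's: apply Proposition~\ref{prop:a2_in_sym_div} to obtain the $\ii$-invariant configuration $(X,L_1,L_2)$, use Picard--Lefschetz in the odd-dimensional fixed component $\Sigma$ together with Proposition~\ref{prop:hf_bound_lag} to force unbounded growth of $\dim HF(\tau_{L_1}^{k}L_2,L_2)$, and conclude via Proposition~\ref{prop:growth_and_ring} and Lemma~\ref{lem:van_unique}. Your write-up is in fact more careful than the paper's in two places it leaves implicit: the verification that $L_1,L_2$ lie in the same summand $\F(X)_\lambda$ (which you justify via the symplectomorphism of Lemma~\ref{lem:van_unique}) and the spin/orientation bookkeeping needed for Proposition~\ref{prop:hf_bound_lag}; your restriction to even powers $2k$ is harmless but unnecessary, since $\tau_{L_1}$ already commutes with $\ii$.
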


\begin{corollary}
	\label{cor:ring_grass}
	Let $X\subset Gr(k,n)$ be a smooth divisor of degree $3\le d < n$, $\dim_\C X$ even.
	Let $L\subset X$ be an $|\cO(d)|$-vanishing Lagrangian sphere. Then there is a ring isomorphism  $HF^*(L,L;\C)\cong \C[x]/x^2$.
\end{corollary}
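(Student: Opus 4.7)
The plan is to derive the corollary by direct application of Proposition~\ref{prop:ring_div} to the data $Y = Gr(k,n)$, $\cL = \cO(1)$. Three hypotheses of that proposition need checking: that $X$ is Fano, that $\dim_\C X$ is even, and that the conditions of Theorem~\ref{th:twist_inf_general} in case (a) or (b) hold for some holomorphic involution $\ii$ of $Gr(k,n)$.

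For the Fano condition I would use adjunction together with the formula $K_{Gr(k,n)} = \cO(-n)$ recalled just before the proof of Theorem~\ref{th:twist_inf_grass}: this gives $K_X = \cO(d-n)|_X$, so $X$ is Fano exactly when $d < n$, which is part of our assumption. Even-dimensionality of $X$ is given, and since $\dim_\C X = k(n-k) - 1$, this forces $k(n-k)$ to be odd, so both $k$ and $n-k$ are odd; write $k = 2p+1$ and $n = 2q$.

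For the final hypothesis I would reuse the involutions constructed in the proof of Theorem~\ref{th:twist_inf_grass}: a linear involution on $\C^{2q}$ with $q+l$ positive and $q-l$ negative eigenvalues induces a non-degenerate holomorphic involution $\ii$ of $Gr(2p+1, 2q)$ lifting to $\cO(1)$. That proof already establishes the combinatorial facts needed. When $q$ is odd, the choice $l = 0$ gives a fixed-locus component of even complex dimension and the base-locus dimensional inequality required by Lemma~\ref{lem:exist_smooth_inv_div}, so a smooth $\ii$-invariant divisor in $\P H^0(Gr(k,n), \cO(d))_\pm$ exists for odd $d$. When $q$ is even, the choice $l = 1$ similarly produces such a component and a smooth invariant divisor in one of the two eigen-subsystems, with the correct sign dictated by the same dimension computation. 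Either way, case (a) of Theorem~\ref{th:twist_inf_general} (for even $d$) or case (b) (for odd $d$) applies, and Proposition~\ref{prop:ring_div} delivers the desired ring isomorphism for the given $|\cO(d)|$-vanishing sphere $L \subset X$.

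The hard part is not the corollary itself but Proposition~\ref{prop:ring_div}: its proof combines Proposition~\ref{prop:growth_and_ring} with an unbounded growth statement for $\dim HF(\tau_L^k L_2, L_2)$ in $k$, which in turn rests on Proposition~\ref{prop:hf_bound_lag} applied to the invariant fixed-locus Lagrangians $L_i^\ii \subset \Sigma$ from Proposition~\ref{prop:a2_in_sym_div}; here oddness of $\dim_\C \Sigma$ is used exactly as in the proof of Theorem~\ref{th:twist_inf_general} to make the homological intersection of $\tau_{L_1^\ii}^{2k}L_2^\ii$ with $L_2^\ii$ grow quadratically in $k$. Lemma~\ref{lem:van_unique} is then invoked inside that proof to transfer the conclusion from the specific sphere built by Proposition~\ref{prop:a2_in_sym_div} to arbitrary $L$. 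For the corollary itself, the only content to add is the verification that Grassmannians of odd-by-odd type admit involutions with the requisite eigenvalue distributions, which is the combinatorial analysis carried out at the end of the proof of Theorem~\ref{th:twist_inf_grass}.
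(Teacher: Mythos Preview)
Your argument is correct and follows exactly the paper's approach: verify the Fano condition via $d<n$, then reuse the involution analysis from the proof of Theorem~\ref{th:twist_inf_grass} to feed into Proposition~\ref{prop:ring_div}. One small slip in your closing commentary: inside the proof of Proposition~\ref{prop:ring_div} the relevant intersection number is $[\tau_{L_1^\ii}^{k} L_2^\ii]\cdot[L_2^\ii] = -\epsilon k$, which grows \emph{linearly} in $k$ (the quadratic growth you recall is the Lefschetz number of $\tau_{L_1^\ii}^{2k}\tau_{L_2^\ii}^{2k}$ from the proof of Theorem~\ref{th:twist_inf_general}, a different quantity); this does not affect the corollary's proof.
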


The possibility ruled out by these two statements is the deformed ring $HF^*(L,L)\cong \C[x]/(x^2-1)$.
An example of a sphere with $HF^*(L,L)\cong \C[x]/(x^2-1)$
is the antidiagonal $L\subset \P^1\times \P^1$.
Note that for this sphere, $\tau_L$ has order 2 in $\pi_0\Symp(\P^1\times \P^1)$ \cite{SeiL4D}.
It seems natural to ask whether there is a general relation between the isomorphism $HF^*(L,L)\cong\C[x]/(x^2-1)$ and
$\tau_L$ being of finite order (both cases are rare). 
Observe that
for many, but not all, pairs $(k,n)$ Corollary~\ref{cor:ring_grass} follows the grading consideration in Remark~\ref{rem:hf_ring}.

\begin{proof}[Proof of Proposition~\ref{prop:ring_div}]
	
	As in the beginning of the proof of Theorem~\ref{th:twist_inf_general}, take $X,L_1,L_2$
	constructed in Proposition~\ref{prop:a2_in_sym_div}. By Lemma~\ref{lem:van_unique},
	it suffices to prove that $HF^*(L_1,L_1)\cong \C[x]/x^2$.
	
	From the Picard-Lefschetz formula (Lemma~\ref{lem:picard_lef}),
	given that $|L_1\cap L_2|=1$ and $\dim L_i^\ii$ is odd, we get the equality 
	$[\tau_{L_1^\ii}^k L_2^\ii]=[L_2^\ii ]-\epsilon k [L_1^\ii]$ in the homology of the fixed locus $H_*(X^\ii)$.
	Consequently, 
	$[\tau_{L_1^\ii}^k L_2^\ii]\cdot [L_2^\ii]=-\epsilon k$.
	By Proposition~\ref{prop:hf_bound_lag},
	$\dim HF^*(\tau_{L_1}^k L_2,L_2)\ge k$.
	By Proposition~\ref{prop:growth_and_ring},
	$HF^*(L_1,L_1)\cong \C[x]/x^2$.
\end{proof}

\begin{proof}[Proof of Corollary~\ref{cor:ring_grass}]
	Repeat the proof of Theorem~\ref{th:twist_inf_grass} but refer to Proposition~\ref{prop:ring_div} instead of Theorem~\ref{th:twist_inf_general}. 
	Recall the condition $d<n$  means that $X$ is Fano.
\end{proof}


\bibliography{Symp_bib}{}
\bibliographystyle{plain}

\end{document}